\numberwithin{equation}{section}
\newtheorem{thm}{Theorem}[section]
\newtheorem{cor}[thm]{Corollary}
\newtheorem{conj}[thm]{Conjecture}
\newtheorem{lem}[thm]{Lemma}
\newtheorem{prop}[thm]{Proposition}
\theoremstyle{definition} 
\newtheorem{defn}[thm]{Definition}
\newtheorem{rem}[thm]{Remark}
\newtheorem{exam}[thm]{Example}
\newtheorem*{thm-Dynkin}{Theorem \ref{thm:Dynkin}}
\newcommand{\thmDynkin}{Let $(Q, D_{Q})$ be Dynkin. Suppose $D_{Q}$ Frobenius degenerates flatly to $D_{Q}'$. Then, $\Pi(Q, D_{Q})$ and $\Pi(Q, D_{Q}')$ are both Frobenius algebras and $\Pi(Q, D_{Q})$ Frobenius degenerates flatly to $\Pi(Q, D_{Q}')$.
}
\newtheorem*{conj-strongflatness}{Conjecture \ref{conj:strongflatness}}
\newcommand{\conjstrongflatness}{
The Hilbert--Poincar\'{e} series of $\Pi(Q, D_{Q})$, with $D_{Q} = (A_{i}, M^{\alpha})$ is given by,
$$
h_{\Pi(Q, D_{Q})}(t) =
\begin{cases} 
(I + t^{\text{max}_{t}+2}P)D(1-At+t^{2})^{-1}
& \text{for } (Q, D_{Q}) \text{ Dynkin} \\
D(1-At+t^{2})^{-1} & \text{otherwise}
\end{cases}
$$
where, in the Dynkin case, $\text{max}_{t}$ is the maximum path length of a non-zero homogeneous element in $\Pi(Q, D_{Q})$ and $P$ is the permutation matrix for the permutation of the set $\{ 1_{A_{i}} \}_{i \in Q_{0}}$ given by the Nakayama automorphism. 
}
\newtheorem*{conj-flatness}{Conjecture \ref{conj:flatness}}
\newcommand{\conjflatness}{
Suppose $D_{Q}$ flatly Frobenius deforms to $D_{Q}'$. Then with respect to the path length grading,
$$
h_{\Pi(Q, D_{Q})}(t) = h_{\Pi(Q, D_{Q}')}(t).
$$  
}
\newcommand{\opr}{\operatorname{r}} 
\newcommand{\oplt}{\operatorname{lt}} 
\newcommand{\irr}{\text{irr}}
\newcommand{\bF}{\mathbb{F}}
\newcommand{\bZ}{\mathbb{Z}}
\newcommand{\bN}{\mathbb{N}}
\newcommand{\cB}{\mathcal{B}}
\newcommand{\cF}{\mathcal{F}}
\newcommand{\cM}{\mathcal{M}} 
\newcommand{\fg}{\mathfrak{g}} 
\newcommand{\fl}{\mathfrak{l}}
\newcommand{\Hom}{\text{Hom}}
\newcommand{\RHom}{\text{RHom}}
\newcommand{\End}{\text{End}}
\newcommand{\Span}{\text{Span}}
\newcommand{\Rep}{\text{Rep}}
\newcommand{\Mat}{\text{Mat}}
\newcommand{\Aut}{\text{Aut}}
\newcommand{\degen}{
\resizebox{1.6cm}{!}{
\begin{tikzpicture}[decoration=snake]
\node(A) at (0,0) {};
\node(B) at (1.25,0) {};
\draw[->,decorate] (A) -- (B) 
	node[midway, above]{\small{deg}} ;
\end{tikzpicture}
}
}
\newcommand{\fold}{
\overset{\text{fold}}{
\resizebox{1cm}{!}{$\Longrightarrow$}} 
}
\begin{document}

\title{Frobenius Degenerations of Preprojective Algebras}

\author{Daniel Kaplan}
\address{University of Birmingham, Edgbaston, Birmingham B15 2TT,  United Kingdom}
\email{d.kaplan@bham.ac.uk}

\subjclass[2010]{16G20, 16P10, 16S38, 53D20, 05E15}
\keywords{preprojective algebras, Frobenius algebras, degenerations, moment map, Hilbert series}

\date{April 19, 2018}

\begin{abstract}
In this paper, we study a preprojective algebra for quivers decorated with $k$-algebras and bimodules, which generalizes work of Gabriel for ordinary quivers, work of Dlab and Ringel for $k$-species, and recent work of de Thanhoffer de V\"{o}lcsey and Presotto, which has recently appeared from a different perspective in work of K\"{u}lshammer. As for undecorated quivers, we show that its moduli space of representations recovers the Hamiltonian reduction of the cotangent bundle over the space of representations of the decorated quiver. These algebras yield degenerations of ordinary preprojective algebras, by folding the quiver and then degenerating the decorations. We prove that these degenerations are flat in the Dynkin case, and conjecture, based on computer results, that this extends to arbitrary decorated quivers. 
\end{abstract}
\maketitle

\tableofcontents

\pagebreak

\section{Introduction}
A quiver, coined by Peter Gabriel, is a directed graph, whose representations are defined by placing a vector space at each vertex and a linear transformation at each arrow. Gabriel classified connected quivers with finite representation type: these are the quivers whose underlying graph is given by the simply-laced Dynkin diagrams, two infinite families $A_{n}, n \geq 1$ and $D_{n}, n \geq 4$, and three exceptions $E_{6}, E_{7}, E_{8}$ \cite{Gabriel}. Subsequently, Dlab and Ringel proved a more general result for ``species'', quivers labelled with a division ring at each vertex and a bimodule at each edge. They extended Gabriel's classification by showing that species with finite representation type are precisely the Dynkin ones \cite{Dlab}. This paper explores a further generalization to decorated quivers -- quivers with vertices labelled by $k$-algebras and arrows labelled by bimodules. An original motivation for this work is \cite{Presotto}, which in our language considers the affine $D_{4}$ case, while we focus on the Dynkin case.

One approach to representations of quivers involves studying the variety of all such representations and considering its cotangent bundle. More precisely, fixing a field $k$, a quiver $Q$ with vertices $Q_{0}$ and arrows $Q_{1}$, and a dimension vector 
$d = (d_{i})_{i \in Q_{0}} \in \bN^{Q_{0}}$, define $\Rep_{d}(Q)$ to be the vector space of $k$-representations of $Q$ with vertex $i$ labelled by the vector space $k^{d_{i}}$. There is a natural action of the group $GL_{d}(k) := \prod_{i \in Q_{0}} GL_{d_{i}}(k)$ on $\Rep_{d}(Q)$. Letting $\fg \fl_{d}(k) := \oplus_{i \in Q_{0}} \fg \fl_{d_{i}}(k)$ denote the Lie algebra of $GL_{d}(k)$, one studies the moduli space
$$
\cM_{d} := T^{*} \Rep_{d}(Q) /\! / \! / GL_{d}(k) := \mu^{-1}(0) /\! / GL_{d}(k)
$$
where $\mu: T^{*} \Rep_{d}(Q) \rightarrow \fg \fl_{d}(k)^{*}$ is the moment map for the associated action. 

The preprojective algebra gives an algebraic construction of $\cM_{d}$. First, one defines the double quiver $\overline{Q}$ to have the same vertices as $Q$ but with a reverse arrow $\alpha^{*} \in \overline{Q}_{1}$ for each $\alpha \in Q_{1}$. Then the preprojective algebra, $\Pi(Q)$ is defined as the quotient
$$
\Pi(Q) := P(\overline{Q}) / \left \langle \sum_{\alpha \in Q_{1}} \alpha \alpha^{*} - \alpha^{*} \alpha \right \rangle 
$$
where $P(\overline{Q})$ is the path algebra of $\overline{Q}$ \cite{Gelfand}. Then we recover $\cM_{d}$ as the moduli space of representations of $\Pi(Q)$ with dimension vector $d$:
\begin{equation} \label{eq moduli of reps}
\cM_{d} \cong \Rep_{d} \Pi(Q) / \! / GL_{d}(k).
\end{equation}

Anachronistically, one can view this as a litmus test for a correct definition of a preprojective algebra. For a quiver $Q$ together with a decoration $D_{Q}$ consisting of a $k$-algebra at each vertex and a bimodule at each arrow, we would like a notion of preprojective algebra $\Pi(Q, D_{Q})$ such that the analogue of (\ref{eq moduli of reps}) holds:
 \begin{equation} \label{eq moduli of decorated reps}
\Rep_{d} \Pi(Q, D_{Q}) / \! / G_{d} \cong 
 \mu^{-1}(0) /\! / G_{d}.
\end{equation}

We will define an algebra $\Pi(Q, D_{Q})$ and a group $G_{d}$ such that (\ref{eq moduli of decorated reps}) holds in the presence of certain restrictions on the representations and the decorations.

\begin{rem}
A definition of $\Pi(Q, D_{Q})$ was recently given in \cite{Julian}. Thanks to K\"{u}lshammer for pointing this out. Under condition (F) below this recovers our definition. Note that in \cite{Julian}, the viewpoint of representation varieties was not considered and it's an open question to what extent condition (F) can be relaxed while retaining a moment map interpretation. See Remark \ref{rem:weakerassumptions} for a more precise discussion.
\end{rem}

In more detail, a decoration $D_{Q}$ consists of 
\begin{itemize}
\item $\{ A_{i} \}_{i \in Q_{0}}$ with each $A_{i}$ a $k$-algebra and 
\item
 $\{ M^{\alpha} \}_{\alpha \in Q_{1}}$ with $M^{\alpha}$ an $(A_{i}, A_{j})$-bimodule, if $\alpha$ is an arrow from $i$ to $j$. 
\end{itemize}
A decoration satisfies condition (F) if each $A_{i}$ is symmetric Frobenius and, for every $\alpha$, $M^{\alpha}$ is either $A_{i} \otimes_{k} A_{j}$, or $M^{\alpha} = A_{i} = A_{j}$. \\
\\
A representation of a decorated quiver $(Q, D_{Q})$ with $D_{Q} = ( A_{i} , M^{\alpha} )$ consists of
\begin{itemize}
\item $\{ V_{i} \}_{i \in Q_{0}}$ with each $V_{i}$ a \emph{right} $A_{i}$-module and 
\item
 $\{ \rho_{\alpha}: M^{\alpha} \rightarrow \Hom_{k}(V_{i}, V_{j})\}_{\alpha: i \rightarrow j \in Q_{1}, i, j \in Q_{0}}$ with each $\rho_{\alpha}$ an $(A_{i}, A_{j})$-bimodule map. 
\end{itemize}
We fix a dimension vector $d = (d_{i}) \in \bN^{Q_{0}}$ and restrict to representations where $V_{i} = A_{i}^{d_{i}}$ for each $i \in Q_{0}$. Denote this vector space by $\Rep_{d}(Q, D_{Q})$.  

We define the  decorated path algebra to be $P(Q, D_{Q}) := T_{\oplus_{i} A_{i}}( \oplus_{\alpha} M^{\alpha})$, so modules for the decorated path algebra are representations of the decorated quiver. We then double the quiver by adding an arrow $\alpha^{*}: j \rightarrow i$ for each $\alpha: i \rightarrow j \in Q_{1}$ and decorate the new arrow $\alpha^{*}:j \rightarrow i$ with the bimodule dual,  
$$
M^{\alpha^{*}} := \Hom_{A_{i} \otimes A_{j}}(M^{\alpha}, A_{i} \otimes A_{j}).
$$ 
A decorated preprojective algebra satisfying (\ref{eq moduli of decorated reps}) cannot be defined in this level of generality. Hence we restrict to decorations satisfying condition (F) and representations with $V_{i} := A_{i}^{d_{i}}$ for each $i \in Q_{0}$. The decorated preprojective algebra $\Pi(Q, D_{Q})$ is the quotient of the path algebra for the double quiver by the two-sided ideal generated by the element $r = \sum_{\alpha \in Q_{1}} r_{\alpha}$. Here $r_{\alpha}$ for $\alpha: i \rightarrow j$ is a commutator $[1_{M^{\alpha}}, 1_{M^{\alpha^{*}}}]$ in the case $M^{\alpha} = A_{i} = A_{j}$, and $\sum_{l} e^{i}_{l} \otimes 1 \otimes f^{i}_{l}  - \sum_{m} e^{j}_{m} \otimes 1 \otimes f^{j}_{m}$ in the case $M^{\alpha} = A_{i} \otimes A_{j}$, where $\{ e^{i}_{l} \}$ and $\{ f^{i}_{l} \}$ are dual bases of $A_{i}$ under the Frobenius form, for all $i \in Q_{0}$. See Definition \ref{def:dec preproj alg} for more detail and Remark \ref{rem:well-defined} for independence on the choice of basis. 

There is a natural action of $G_{d} := \prod_{i} GL_{A_{i}}(V_{i}) \cong \prod_{i} GL_{A_{i}}(A_{i}^{d_{i}})$ on $\Pi(Q, D_{Q})$. Denoting the corresponding Lie algebra by $\fg_{d} = \oplus_{i} \fg \fl_{d_{i}}(A_{i})$ one can again consider the moment map $\mu_{d} : T^{*} \Rep_{d}(Q, D_{Q}) \rightarrow \fg_{d}^{*}$. Finally, we can state (\ref{eq moduli of decorated reps}) precisely as a theorem.   

\begin{thm} \label{Main}
$\Rep_{d}(\Pi(Q, D_{Q}))$ is the zero fiber of the moment map 
$$ 
\mu_{d}: T^{*} \Rep_{d}(Q, D_{Q}) \rightarrow \fg^{*}_{d}.
$$ 
Moreover, the coarse moduli space of representations of $\Pi(Q, D_{Q})$ of dimension vector $d$ is given by the Hamiltonian reduction $\mu_{d}^{-1}(0) / \! /G_{d}$.
\end{thm}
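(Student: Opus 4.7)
My plan is to mimic the classical argument of Crawley-Boevey for ordinary quivers, replacing $k$-linearity with $A_{i}$-linearity throughout and using condition (F) to produce the necessary duality pairings.

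First I would establish a canonical $G_{d}$-equivariant symplectic identification
\[
T^{*}\Rep_{d}(Q, D_{Q}) \;\cong\; \Rep_{d}(\overline{Q}, \overline{D_{Q}}),
\]
where $\overline{D_{Q}}$ is the decoration on the doubled quiver with $M^{\alpha^{*}} = \Hom_{A_{i} \otimes A_{j}}(M^{\alpha}, A_{i} \otimes A_{j})$. The pairing sends $(\rho_{\alpha}, \rho_{\alpha^{*}})$ to a trace of the composition obtained by evaluating $\rho_{\alpha^{*}}$ against $\rho_{\alpha}$ under bimodule duality. In the two cases allowed by (F) this can be computed explicitly: when $M^{\alpha} = A_{i} \otimes A_{j}$, the datum $\rho_{\alpha}$ is just an element of $\Hom_{k}(V_{i}, V_{j})$ and $\rho_{\alpha^{*}}$ an element of $\Hom_{k}(V_{j}, V_{i})$, paired by the ordinary trace; when $M^{\alpha} = A_{i} = A_{j}$, both $\rho_{\alpha}$ and $\rho_{\alpha^{*}}$ are $A_{i}$-linear maps between free $A_{i}$-modules, paired by composing and applying the Frobenius form on $A_{i}$ to the resulting trace. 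Symmetry of the Frobenius form ensures both pairings are non-degenerate and give a symplectic form.

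Next I would compute the moment map for the $G_{d}$-action with respect to this symplectic structure. The infinitesimal action of $\xi \in \fg_{d}$ on a point $(\rho_{\alpha}, \rho_{\alpha^{*}})$ is the commutator action at each vertex, and $\mu_{d}(\rho_{\alpha}, \rho_{\alpha^{*}})_{i}$ is obtained by collecting all contributions at vertex $i$ of the Hamiltonian $\sum_{\alpha \in Q_{1}} \langle \rho_{\alpha^{*}}, \rho_{\alpha} \circ (-) \rangle$ along the action direction. A direct unwinding, separately in the two cases of (F), shows that the vanishing of the $i$-th component of $\mu_{d}$ is exactly the evaluation on the representation of the local summand of $r = \sum_{\alpha} r_{\alpha}$ at $i$: in the $M^{\alpha} = A_{i} \otimes A_{j}$ case one gets the expression involving the dual basis $\{e^{i}_{l}\}, \{f^{i}_{l}\}$ (from the way the Frobenius form enters the trace pairing), while in the $M^{\alpha} = A_{i}$ case one gets the commutator $[1_{M^{\alpha}}, 1_{M^{\alpha^{*}}}]$ evaluated on $V$. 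Summing over $i$ recovers exactly the preprojective relation defining $\Pi(Q, D_{Q})$, so $\mu_{d}^{-1}(0) = \Rep_{d}(\Pi(Q, D_{Q}))$.

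The moduli-space assertion then follows by general GIT: the coarse moduli of representations of any algebra $B$ with dimension vector $d$ (and the fixed module structure over $\bigoplus_{i} A_{i}$) is $\Rep_{d}(B) /\!\!/ G_{d}$, so applying this to $B = \Pi(Q, D_{Q})$ and substituting the previous identification yields $\mu_{d}^{-1}(0)/\!\!/ G_{d}$. The main obstacle I expect is the bookkeeping in the second step: one must check carefully that the bimodule-duality pairing used to set up $T^{*}$ intertwines correctly with the Frobenius forms so that the combinatorics of the relation element $r_{\alpha}$, in particular the explicit occurrence of the dual basis $\sum_{l} e^{i}_{l} \otimes 1 \otimes f^{i}_{l}$, actually appear from the derivative of the Hamiltonian rather than being an artefact of an ad hoc choice. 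Condition (F), together with the symmetry and non-degeneracy of the Frobenius form, should make this computation intrinsic and independent of bases, as indicated in Remark \ref{rem:well-defined}.
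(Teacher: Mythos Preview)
Your proposal is correct and follows essentially the same route as the paper: both arguments establish the $G_{d}$-equivariant symplectic identification $T^{*}\Rep_{d}(Q,D_{Q})\cong \Rep_{d}(\overline{Q},\overline{D_{Q}})$, split into the two bimodule cases permitted by condition~(F), and then verify directly that the moment map is evaluation at the element $r$. The paper carries out the ``bookkeeping'' you anticipate via an explicit map $\Phi:\End_{k}(A)\to\End_{A}(A)$ (Lemmas~\ref{Def Phi}--\ref{lem:restrictiondual}) relating the $k$-linear trace to $\lambda\circ\operatorname{tr}_{A}$, which is precisely the intertwining of the Frobenius form with the duality pairing that you flag as the main obstacle.
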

\noindent This theorem follows from a more precise statement, Theorem \ref{thm moment map}, given in Section 3.\\
\\
\indent A second goal of this paper is to study 1-parameter families of decorated preprojective algebras whose generic fibers are ordinary preprojective algebras. In this sense, decorated preprojective algebras are often (formal) degenerations of preprojective algebras, another motivation for their study. 

This perspective is best understood in a simple example. Consider the decorated quiver 
$$
(Q, D_{Q}) = k \overset{k}{\longleftarrow} k \overset{k}{\longrightarrow} k.
$$ 
Since right $k$-modules are vector spaces and $k$-bimodule maps are linear transformations, $\Rep_{d}(Q, D_{Q}) \cong \Rep_{d}(A_{3})$ and $ \Pi(Q, D_{Q}) = \Pi(A_{3})$ is an ordinary preprojective algebra. View these two arrows as a single arrow into the direct sum, folding the decorated quiver as follows:
$$
k \overset{k}{\longleftarrow} k \overset{k}{\longrightarrow} k
\hspace{1cm} \fold  \hspace{1cm}
 k \overset{k \oplus k}{\longrightarrow} k \oplus k. 
 $$
Folding the decorated quiver does not change the decorated preprojective algebra, $\Pi(A_{3})$. Furthermore, $k \oplus k$ can be viewed as a 2-dimensional $k$-algebra with pointwise multiplication where it has a non-trivial flat degeneration to the $k$-algebra $S =k[x]/(x^{2})$. Hence one can degenerate the decoration 
 $$
 k \overset{k \oplus k}{\longrightarrow} k \oplus k
\hspace{1cm} \degen \hspace{1cm}
 k \overset{S}{\longrightarrow} S
$$ 
which induces a degeneration of $\Pi(A_{3}) = \Pi(k \overset{k \oplus k}{\longrightarrow} k \oplus k)$ to $\Pi(k \overset{S}{\longrightarrow} S)$. We denote this degeneration by $\Pi(B_{2})$ since the $B_{2}$ quiver is obtained from $A_{3}$ by folding the arrows as above. 

Both the formal and filtered perspectives of this degeneration prove fruitful. In the formal setting, $k[[t]][x]/(x^{2}-t)$ is an explicit formal degeneration from $k \oplus k$ to $S$. The corresponding formal degeneration of decorated quivers is
$$
\xymatrix{
& ( k[[t]]  \ar[rrr]^-{k[x][[t]]/(x^{2}-t)}   \ar[ld]_{``t=1"} & & & k[x][[t]]/(x^{2}-t) ) \ar[rd]^{t=0}  & \\
( k \overset{k \oplus k}{\longrightarrow} k \oplus k )
 & & & & & ( k \overset{S}{\longrightarrow} S ),
}
$$
which in turn gives a formal degeneration $\Pi_{k[[t]]}( k[[t]] \longrightarrow  k[x][[t]]/(x^{2}-t) )$ from $\Pi(A_{3})$ to $\Pi(k \rightarrow S)$. 
Alternatively, viewing $k \oplus k \cong k[x]/(x^{2}-1)$ as a filtered algebra with associated graded algebra $S$, realizes $\Pi(A_{3})$ as a filtered algebra with associated graded algebra $\Pi(k \rightarrow S).$ The filtration on $\Pi(A_{3})$ is given by $
\{ 0 \} \subset \Pi(A_{3})^{\Phi} \subset \Pi(A_{3})
$
where $\Phi: A_{3} \rightarrow A_{3}$ is the graph automorphism exchanging the outer vertices and $\Pi(A_{3})^{\Phi}$ is the subspace of $\Phi$-invariant elements. 

This motivates the study of decorated quivers as a means of interpreting a large class of degenerations of preprojective algebras. But one should not conflate these notions. The decorated preprojective algebra $\Pi(k \rightarrow \Mat_{2}(k) )$ is \emph{not} a degeneration of $\Pi(\tilde{D}_{4})$, where $\tilde{D}_{4}$ is the affine $D_{4}$ quiver. Conversely, $\Pi(A_{2})$ has a flat Frobenius degeneration to $\bigwedge(k^{2})$, see Example \ref{Graph in dim 4}, which is \emph{not} a decorated preprojective algebra for any decorated quiver except $Q = A_{1}$ and $D_{Q}= (\bigwedge(k^{2}))$.  

Most of the decorated quivers appearing in this paper arise as degenerations of ordinary quivers by folding vertices and then degenerating, as we now explain. In general, a group $H$ acts on a quiver $Q$ if $H$ acts on the sets $Q_{1}$ and $Q_{0}$ such that the source and target maps are $H$-equivariant. In this case, the source and target maps descend to the quotient quiver $Q/H := (Q_{0}/H, Q_{1}/H, s, t)$. In the case $H= \Aut(Q)$ is the group of graph automorphisms of the underlying undirected graph of $Q$, we call the quotient $Q/ \Aut(Q)$ the folding of $Q$ by automorphisms.  	

If $D_{Q} =(A_{i}, M^{\alpha})$ is a decoration of $Q$ then one can decorate $Q/ \Aut(Q)$ by $D_{Q}/\Aut(Q) = (B_{j}, N^{\beta})$ defined by:
$$
B_{j} := \bigoplus_{i \in \Aut(Q)\cdot j} A_{i} 
\hspace{1cm} \text{and} \hspace{1cm} 
N^{\beta} := \bigoplus_{\alpha \in \Aut(Q) \cdot \beta} M^{\alpha}.
$$
In this case, we say $(Q, D_{Q})$ \emph{folds} to $(Q/\Aut(Q), D_{Q}/\Aut(Q))$. Notice that a folding of decorated quivers induces an equivalence 
$$
\Rep(Q, D_{Q}) \cong \Rep(Q/\Aut(Q), D_{Q}/\Aut(Q))
$$ 
and an isomorphism 
$$
\Pi(Q, D_{Q}) \cong \Pi(Q/\Aut(Q), D_{Q}/\Aut(Q)). \vspace*{.3cm}
$$ 

The second aspect of this procedure is degenerating the decoration, which amounts to degenerating the algebras at each vertex and the bimodules at each arrow. Since the definition of the preprojective algebra requires each algebra to be Frobenius, one needs to degenerate the algebras in a way that preserves a Frobenius form, see Section 4 for more detail.
 
We suspect that these degenerations of preprojective algebras are always flat. In other words, we present the following conjecture in Section 4.

\begin{conj-flatness} 
\conjflatness
\end{conj-flatness}

\begin{rem}
The conjecture was proven in the case $Q = \tilde{D_{4}}$ for \emph{commutative} Frobenius algebras $A_{i}$ in \cite{Presotto}, by computing an explicit basis for the maximally folded and maximally degenerated decoration 
$$
\xymatrix{
& k \ar[d] & \\
k \ar[r] & k & k \ar[l] 
\hspace{.3cm} \fold \hspace{.3cm}  
k^{\oplus 4} \ar[r] & k \degen  k[x, y]/(xy, x^{2}-y^{2}) \ar[r] & k. \\
& k \ar[u] & 
}
$$ 
This work may be considered a sequel, in that we generalize their statement to arbitrary Frobenius decorated quivers. Moreover, we prove the conjecture in the Dynkin case. 
\end{rem}
 

In fact, we suspect that the existence of a deformation, while crucial for our techniques, may be an artifact of the proof style and unnecessary for the result. That is, assuming condition (F), the Hilbert--Poincar\'{e} series of the decorated preprojective algebra may depend purely combinatorially on the dimensions of the algebras and bimodules in the decoration. 

One can define a (not-necessarily symmetric) Cartan matrix $C:= 2I-A$ where $A = [a_{ij}]$ is a $Q_{0} \times Q_{0}$ matrix with entries 
$$
a_{ij} := \sum_{\alpha: i \rightarrow j} \frac{  \dim_{k}(M^{\alpha})}{ \dim_{k}(A_{i})} 
= \sum_{\alpha: i \rightarrow j} \dim_{A_{i}}(M^{\alpha}) \in \bZ.
$$ 
We say $(Q, D_{Q})$ is \emph{Dynkin} if the Cartan matrix $C$ is positive-definite. 

Observe, that if $D =[d_{ij}]$ with $d_{ij} := \delta_{ij} \dim_{k}(A_{i})$ then $DA$ is symmetric as 
$$
(DA)_{ij} = \sum_{\alpha:i \rightarrow j} \dim_{k}(M^{\alpha}) = \sum_{\alpha^{*}: j \rightarrow i} \dim_{k}(M^{\alpha^{*}}) = (DA)_{ji}. 
$$
Therefore, $DC$ is symmetric and from the perspective of \cite{GLS}, $C$ is a symmetrizable Cartan matrix with symmetrizer $D$.
 
\begin{conj-strongflatness} 
\conjstrongflatness
\end{conj-strongflatness}

Moreover, we will often have an additional grading on the decorated path algebra coming from placing a graded algebra at one or more vertices. Throughout the paper we call this the $x$-grading 
defined as the number of occurrences of $x$ in the path, since the graded algebras we consider at the vertices are of the form $k[x]/(x^{n})$ for some $n \in \bN$. The $x$-grading descends from the decorated path algebra to the decorated preprojective algebra in the $C_{n}$, $G_{2}$, and $B_{2}$ cases. The $x$-grading can be adjusted to descend in the $B_{n}$ with $n \geq 3$, and $F_{4}$ cases. With respect to path length and an additional $x$-grading the Hilbert--Poincar\'{e} series is conjecturally given by
$$
h_{\Pi(Q, D_{Q})}(t, s)
=
\begin{cases}
(1 + s^{\max_{s}}t^{\max_{t}+2}P) D (1 -At +Bt^{2})^{-1} & \text{for } (Q, D_{Q}) \text{ Dynkin} \\
D (1 - At+Bt^{2})^{-1} & \text{ otherwise} \\
\end{cases}
$$
where $\max_{s}$ is the maximum $x$-grading of a non-zero homogeneous element of $\Pi(Q, D_{Q})$,
$$
A = [a_{ij}], \ \ a_{ij} := \sum_{\alpha: i \rightarrow j} \frac{ h_{M^{\alpha}}(s) }{ h_{A_{i}}(s) },
$$ 
and
$$
B = [b_{ij}], \ \ b_{ij}:= \delta_{ij} h_{k \cdot 1_{A_{i}}r }(s).
$$ 
Here we say $(Q, D_{Q})$ is \emph{Dynkin} if, as before, the Cartan matrix $2I-A \mid_{s=1}$ is positive-definite.

In Sections 4 and 5 we prove the conjecture in the Dynkin case and show that the Frobenius form on the preprojective algebra degenerates to one on the decorated preprojective algebras. More precisely, we prove
 
\begin{thm-Dynkin}
\thmDynkin
\end{thm-Dynkin}



Moreover, letting $S :=k[x]/(x^{2})$ and $S':=k[x]/(x^{3})$, the most degenerate decorated non-simply laced Dynkin quivers are: 
\begin{align*}
B_{n} &:= (A_{n+1}, ( \{ k , S, \dots, S \}, 
\{ _{k} S_{S}  , _{S} S_{S} , \dots, _{S} S_{S} \} )), \ \ n \geq 2 \\
C_{n} &: = (A_{n+1}, ( \{ k , \dots, k , S \}, 
\{ _{k} k_{k}  , \dots, {}_{k} k_{k} , {}_{k} S_{S} \} )), \ \ n \geq 4 \\
F_{4} &:= (A_{4}, ( \{ k , k, S, S \}, 
\{ _{k} k_{k}  , {}_{k}k_{S} , \ _{S}S_{S} \} )) \\
G_{2} &:= (A_{2}, ( \{ k , S' \}, 
\{ _{k} S'_{S'}  \} )).
\end{align*}
The (path, $x$)-bigraded Hilbert--Poincar\'{e} series of the corresponding decorated preprojective algebras are:

\begin{align*}
\begin{split}
h_{e_{i} \Pi(B_{n}) e_{j}}(s, t) {}&=
 t^{|i-j|}s^{|i-j|/2}(1+t^{2 \min \{ i, j \} -2} s^{\min \{ i, j \} - 1})(1+s)\\
 &\hspace{1cm} \cdot (1+st^2 +s^2t^4 + \cdots + s^{n-\max \{ i, j \}} t^{2n-2 \max \{ i, j \}}), \ \ i,j \geq 2 
\end{split}  \\
\begin{split}
h_{e_{1} \Pi(B_{n}) e_{j}}(s, t) {}&= 
h_{e_{j} \Pi(B_{n}) e_{1}}(s, t) = t^{j-1} s^{(j-1)/2}(1+s)\\
 &\hspace{1cm} \cdot (1+st^2+s^2t^4+ \cdots + s^{n-j} t^{2n-2j}), \ \  j \geq 2
\end{split} \\
h_{e_{1} \Pi(B_{n}) e_{1}}(s, t) {}&=1 + st^2+ s^2t^4 + \cdots + s^{n-1}t^{2n-2} 
\end{align*}
$h_{e_{i} \Pi(C_{n}) e_{j}}(s,t) = 
(1+ s t^{2 |n-\max \{i, j \}|})
t^{|i-j|}(1+t^2+t^4+ \cdots + t^{2 \min \{i,j\}-2}) $ \\ 

$h_{\Pi(F_{4})}(s, t) = (1+t^4s) \cdot$  
$$
\footnotesize{
\left ( 
\arraycolsep=3pt
\medmuskip=1mu
\begin{array}{cccc}
(1+t^6s)  & t(1+t^2s+t^4s^s) & t^2(1+s)(1+t^2s) &  t^3s^{1/2}(1+s) \\
t(1+t^2s+t^4s^2) & (1+t^2)(1+t^2s+t^4s) & t(1+s)(1+t^2s) &
                                            t^2s^{1/2}(1+s)(1+t^2) \\
t^2(1+s)(1+t^2s) & t(1+s)(1+t^2s) & (1+s)(1+t^2+t^4)(1+t^2s)
                                      & ts^{1/2}(1+s)(1+t^2+t^4) \\
 t^3s^{1/2}(1+s) &  t^2s^{1/2}(1+s)(1+t^2) & ts^{1/2}(1+s)(1+t^2+t^4)  &             
                                              (1+s)(1+t^6s)\\ 
\end{array}
\right )
}
$$
$
h_{\Pi(G_{2})}( s, t) = (1+t^2s)\left (
\begin{array}{cc}
1+t^2s^2 & (1+s+s^2)t \\
(1+s+s^2)t & (1+s+s^2)(1+t^2) \\
\end{array}
\right )
$ \\
For more detail, see Section 5.

\begin{exam}
We conclude the introduction by returning to our simple yet illustrative $B_{2}$ example, i.e., the $A_{2}$ quiver with vertices labelled by the $k$-algebras $k$ and $S=k[x]/(x^{2})$ and the arrow labelled by $S$ viewed as a $(k, S)$-bimodule, 
$$
\xymatrix{
k \ar@/^/[rrr]^{S} &&& S.
}
$$
$S$ has Frobenius form $\lambda: S \rightarrow k$ given by $s + tx \mapsto t$, which is non-degenerate since the kernel is the subspace of constant polynomials, which does not contain any non-trivial left ideals. With respect to this form, the basis $\{ 1, x \}$ has dual basis $\{ x, 1 \}$ giving rise to the non-degenerate, $S$-central element $x \otimes_{k} 1 + 1 \otimes_{k} x$. \\
\\
Using this structure, the decorated preprojective algebra is defined as
$$
\Pi(Q, D_{Q}) := T_{k \oplus S}(S \oplus S) / \langle 1 \otimes_{S} 1, 1 \otimes_{k} x + x \otimes_{k} 1 \rangle
$$
where $S \oplus S$ is made into a $(k \oplus S)$-bimodule by viewing one $S$ as an $(S,k)$-bimodule and the other a $(k, S)$-bimodule, and defining all other actions to be zero. More explicitly, one can see that $\Pi^{\geq 3}(Q, D_{Q}) = 0$ and compute,
\begin{align*}
\Pi(Q, D_{Q}) &= \Pi^{0}(Q, D_{Q}) \oplus \Pi^{1}(Q, D_{Q}) \oplus \Pi^{2}(Q, D_{Q})\\
&=(S \oplus k) \oplus (S \oplus S) \oplus 
(S \oplus k)
\end{align*}
where the second graded piece is computed using the identifications
$$
S \otimes_{S} S / \langle 1 \otimes_{S} 1 \rangle \cong S/ \langle 1 \rangle \cong k
$$
as vector spaces and 
$$
S \otimes_{k} S / \langle 1 \otimes_{k} x + x \otimes_{k} 1, x \otimes_{k} x \rangle \cong S
$$ 
as $S$-bimodules. Consequently, the matrix-valued Hilbert--Poincar\'{e} series in variables $t$ and $s$ corresponding to path length and $x$-grading respectively is given by
$$
h_{\Pi(B_{2})}(t, s) = \left (
\begin{array}{cc}
1 + t^2s & t(1+s) \\
t(1+s) & (1+t^2)(1+s) \\
\end{array}
\right ).
$$ 
This agrees with the formula in Conjecture \ref{conj:strongflatness}.
\end{exam}

The paper is organized as follows. We recall the theory of representations of quivers and decorated quivers in Section 2. We define the preprojective algebra for quivers and decorated quivers in Section 3, and prove Theorem \ref{Main}. In Section 4, we explain the degeneration perspective more carefully, first defining the notion for Frobenius algebras. We conjecture that a flat degeneration of the decorations gives rise to a flat degeneration of the corresponding decorated preprojective algebras. Finally, we present a diverse class of examples in low dimensions where the conjecture has been verified using Magma. 
In Section 5, we prove the conjecture in the case of Dynkin quivers by reducing to a single, most degenerate decorated quiver for each Dynkin quiver and producing an explicit basis for its decorated preprojective algebra.\\
\\
{\bf Acknowledgements} \\ 
The author is indebted to Travis Schedler for suggesting the project and for his patience and guidance throughout. The Max Planck Institute for Mathematics and Hausdorff Institute for Mathematics provided excellent working conditions. 
The author was financially supported by the Roth scholarship at Imperial College. 
I would like to thank Sue Sierra for reading an earlier version of this paper, and suggesting several improvements and clarifications. I'm grateful to Peter Crooks, Julian K\"{u}lshammer, Jos\'{e} Simental, and Michel Van den Bergh for helpful comments. I would also like to Karen Erdmann and Daniel Erman useful email correspondences. The exposition has been clarified following conversations with Bill Crawley-Boevey, Sam Gunningham, Michael McBreen, Pavel Safronov, and Michael Wong.

\section{Representations of Decorated Quivers}

\subsection{Recollections on Quivers}
For completeness of exposition and to fix notation, we recall the definition of a representation of a quiver, then show how the theory is subsumed by the representation theory of associative algebras. \\
\\
Fix a ground field $k$.
\begin{defn}
A \emph{quiver} is a quadruple $Q= (Q_{0}, Q_{1}, s, t)$ where $s, t: Q_{1} \rightarrow Q_{0}$ are the source and target maps respectively from the set of arrows, $Q_{1}$, to the set of vertices, $Q_{0}$. 
\end{defn}

A quiver is merely a directed graph, but the terminology comes from Gabriel, who considered representations of such graphs, defined as follows:

\begin{defn}
A \emph{representation} of a quiver $Q$ is $\rho = (V_{i}, \rho_{\alpha})_{i \in Q_{0}, \alpha \in Q_{1}}$ where $\rho_{\alpha}: V_{s(\alpha)} \rightarrow V_{t(\alpha)}$ is a $k$-linear map of vector spaces, for each $\alpha \in Q_{1}$. 
\end{defn} 


We consider exclusively quivers with finitely many vertices and arrows and representations with each vector space finite-dimensional. The function $\dim(V_{(-)}): Q_{0} \rightarrow \bN$ defines an $\bN^{Q_{0}}$-grading on the set of representations. Fixing $d = (d_{i}) \in \bN^{Q_{0}}$ we define
$$
\Rep_{d}(Q) := \{ \rho = ( k^{d_{i}}, \rho_{\alpha}) \} =  \bigoplus_{\alpha \in Q_{1}} \Hom(k^{d_{s(\alpha)}}, k^{d_{t(\alpha)}}).
$$
$\Rep_{d}(Q)$ is a finite-dimensional vector space with an action of $G_{d} := \prod_{i \in Q_{0}} GL_{d_{i}}(k)$ given by 
$$
(g_{i})_{i \in Q_{0}} \cdot (V_{i}, \rho_{\alpha})_{i \in Q_{0}, \alpha \in Q_{1}} = (V_{i}, \ g_{t(\alpha)} \circ \rho_{\alpha} \circ g^{-1}_{s(\alpha)} ).
$$
One says two representations are isomorphic if they lie in the same $G_{d}$-orbit. 

Alternatively, one can collectively study representations of $Q$ by considering the category $\mathcal{R}(Q)$ whose objects are representations of $Q$ and morphisms are commuting diagrams between representations. Invertible morphisms are given by $|Q_{0}|$-tuples of (compatible) invertible linear maps. Hence, for the full subcategory $\mathcal{R}_{d}(Q) \subset \mathcal{R}(Q)$ whose objects are $d$-dimensional representations, one has a natural identification between the moduli space of isomorphism classes of semisimple representations in $\mathcal{R}_{d}(Q)$ and the GIT quotient $\Rep_{d}(Q) // G$.

The category $\mathcal{R}(Q)$ is equivalent to the category of finitely-generated modules over an algebra, as we now explain. 

\begin{defn}
The \emph{path algebra} $P_{k}(Q)$ for a fixed quiver $Q=(Q_{0}, Q_{1}, s, t)$ is defined to be 
$$
P_{k}(Q):= T_{kQ_{0}}(kQ_{1}),
$$
where $kS$ denotes the $k$-vector space with basis $S$ and $kQ_{1}$ is made into a $kQ_{0}$-bimodule by $k$-linear extension of the action
$$
e_{i} \cdot \alpha \cdot e_{j} := \delta_{i s(\alpha)} \alpha \delta_{t(\alpha) j}.
$$ 
Here $e_{(\cdot)}: Q_{0} \rightarrow P_{k}(Q)$ regards a vertex as a length zero path. We will usually write $P(Q) := P_{k}(Q)$ suppressing the dependence on the field.
\end{defn}

\begin{rem}
As defined above, multiplication in $P(Q)$ is identified with concatenation of tensors in $T_{kQ_{0}}(kQ_{1})$ and hence is read left to right. This choice is convenient when performing computations with long paths, but requires considering right modules, as explained below.
\end{rem}

The path algebra is graded with $n$th graded piece given by $k$-linear combinations of paths of length $n$, i.e. $n$-tuples of composable arrows in the quiver. Since vertices are idempotents in $P(Q)$ summing to the identity element, there is a further decomposition of the path algebra by source and target vertices,
$$
P(Q) = 1 P(Q) 1 = \left (\sum_{i \in Q_{0}} e_{i} \right ) P(Q) \left ( \sum_{j \in Q_{0}} e_{j} \right ) = \bigoplus_{i, j \in Q_{0}} e_{i} P(Q) e_{j}.
$$
\indent Representations of $Q$ can be identified with right modules for the path algebra, $P(Q)$. Explicitly, if $V$ is a right $P(Q)$-module then one can form the representation $\rho = ( V_{i}, \rho_{\alpha})$ where $V_{i} := V \cdot e_{i}$ and $\rho_{\alpha}$ is the right action by $\alpha$, a linear map $V_{s(\alpha)} \rightarrow V_{t(\alpha)}$. Conversely, if $(V_{i}, \rho_{\alpha})$ is a representation of $Q$ one can form the right module $V:=\bigoplus_{i \in Q_{0}} V_{i}$ with length one paths $\alpha$ acting by $\rho_{\alpha}$ and paths $\gamma = \alpha_{1} \cdot \cdots \cdot \alpha_{n}$ acting by $\rho_{\alpha_{n}} \circ \cdots \circ \rho_{\alpha_{1}}$.   

To describe equivalence classes of representations one needs to compute the orbit space of the action of $G$ on $\Rep_{d}(Q)$. 

\subsection{Decorated quivers}

\begin{defn}
A \emph{decorated quiver} is a pair $(Q, D_{Q})$ where $Q=(Q_{0}, Q_{1}, s, t)$ is a quiver and 
$$
D_{Q}= ( A_{i}, M^{\alpha})_{i \in Q_{0}, \alpha \in Q_{1}}
$$
where $A_{i}$ is a $k$-algebra and $M^{\alpha}$ is an $(A_{s(\alpha)}, A_{t(\alpha)})$-bimodule. Fixing $Q$, a \emph{decoration} is a choice of $D_{Q}$. The decoration $D_{Q}$ is \emph{finite} if each $A_{i}$ is a finitely-generated $k$-algebra and each $M^{\alpha}$ is finitely generated as a left $A_{s(\alpha)}$-module and as a right $A_{t(\alpha)}$-module. 
\end{defn} 

\begin{rem}
This notion appears in the literature under the names $k$-species or modulated graph 
when each $A_{i}$ is a division ring \cite{Dlab}.
One can alternatively choose a presentation for each $A_{i}$ and instead consider the quiver with relations where a loop is added for each generator of each algebra and relations are imposed to capture identical information: see \cite{GLS} and its sequels. 
\end{rem}

\begin{rem}
One can take a categorical perspective by viewing $Q$ as a category where:
\begin{itemize}
\item objects are vertices $Q_{0}$,
\item morphisms are paths, and  
\item composition is given by concatenation of paths, when possible, and zero otherwise.
\end{itemize}
We denote this category by $F(Q)$ to emphasize the structural differences from the original quiver $Q$. Using this language, a decoration is a functor
$D_{(-)}: F(Q) \rightarrow \text{Alg}$ into the Morita category of algebras, objects are $k$-algebras, morphisms are bimodules, and composition of morphisms is given by the tensor product. This is the approach taken in \cite{Julian} under the name prospecies of algebras, in the case where the bimodule is projective as a left module and as a right module. 
\end{rem}
 
We now proceed to develop all of the notions in the previous section for decorated quivers. The original source for much of this material in the setting of species is Dlab and Ringel \cite{Dlab}. 

\begin{defn}
A \emph{representation} of a decorated quiver $(Q, D_{Q})$ is 
$$\rho = (V_{i}, \rho_{\alpha})_{i \in Q_{0}, \alpha \in Q_{1}}$$ where $V_{i}$ is a right $A_{i}$-module and $\rho_{\alpha}: M^{\alpha} \rightarrow \Hom_{k}(V_{s(\alpha)}, V_{t(\alpha)})$ is an $(A_{s(\alpha)}, A_{t(\alpha)})$-bimodule map.
\end{defn}

 Note that $\Hom_{k}(V_{s(\alpha)}, V_{t(\alpha)})$ has a right $A_{t(\alpha)}$-action by postcomposition and a left $A_{s(\alpha)}$-action by precomposition.
 
\begin{rem}
This definition reduces to the ordinary definition of a representation of an (undecorated) quiver when each $A_{i}=k$ and $M^{\alpha}={}_{k} k_{k}, $ in which case each $\rho_{\alpha}$ is determined by a single linear map $\rho_{\alpha}({}_{k}1_{k}) : V_{s(\alpha)} \rightarrow V_{t(\alpha)}$. 
\end{rem}

Fixing a quiver and a finite decoration $(Q, D_{Q})$, we denote by $\mathcal{R}(Q, D_{Q})$ the category of representations of finite type: each $V_{i}$ is finitely generated as a right $A_{i}$-module. Morphisms from $\rho$ to $\rho'$ in this category are given by a collection of maps $( f_{i}: V_{i} \rightarrow V_{i}')$ such that for all $\alpha \in Q_{1}$ and for all $m \in M^{\alpha}$, the diagram of right $(A_{s(\alpha)} \oplus A_{t(\alpha)})$-modules
$$
\xymatrix{
V_{s(\alpha)} \ar[rr]^{\rho_{\alpha}(m)} \ar[d]^{f_{s(\alpha)}} & &  V_{t(\alpha)}  \ar[d]^{f_{t(\alpha)}} \\
V'_{s(\alpha)} \ar[rr]^{\rho'_{\alpha}(m)}  & &  V'_{t(\alpha)} 
}
$$
commutes. As in the undecorated case, for a fixed $d \in \bN^{Q_{0}}$, we consider the full subcategory of ``locally free'' representations 
$\mathcal{R}_{d}(Q, D_{Q})$ whose objects are representations $\rho = ( V_{i}, M^{\alpha})$ with $V_{i} = A_{i}^{d_{i}}$ for all $i \in Q_{0}$.

As before, one can compute equivalence classes of objects in $\mathcal{R}_{d}(Q, D_{Q})$ as orbits under a group action on the vector space of all representations with $V_{i} = A_{i}^{d_{i}}$ for all $i \in Q_{0}$, denoted $\Rep_{d}(Q, D_{Q})$. In more detail, there is an action of $G_{d} = \prod_{i \in Q_{0}} GL_{A_{i}}(A_{i}^{d_{i}})$ on $\Rep_{d}(Q, D_{Q})$ by
$$
(g_{i})_{i \in Q_{0}} \cdot (V_{i}, \rho_{\alpha}) = (V_{i} , g_{t(\alpha)} \rho_{\alpha} g_{s(\alpha)}^{-1}). 
$$
One says two representations are isomorphic if they lie in the same $G_{d}$-orbit. Isomorphism classes of semisimple objects in $\mathcal{R}_{d}(Q, D_{Q})$ can be identified with $\Rep_{d}(Q, D_{Q})/ \!/ G_{d}$.\\


The category $\mathcal{R}_{d}(Q, D_{Q})$ is equivalent to a category of modules for an algebra, generalizing the path algebra in the previous subsection.

\begin{defn}
The \emph{decorated path algebra} $P_{k}(Q, D_{Q})$ for $D_{Q} = ( A_{i}, M^{\alpha})$ is the tensor algebra $T_{\oplus_{i} A_{i}}( \oplus_{\alpha} M^{\alpha})$ where each $M^{\alpha}$ is viewed as an $(\oplus_{i} A_{i}, \oplus_{i} A_{i})$-bimodule by defining 
$$
\left ( \sum_{i} a_{i} \right ) \cdot m_{\alpha} \cdot \left ( \sum_{i} a'_{i} \right )
:= a_{s(\alpha)} \cdot m_{\alpha} \cdot a'_{t(\alpha)} 
$$
for $a_{i}, a_{i}' \in A_{i}$ and $m^{\alpha} \in M^{\alpha}$. We will usually write $P(Q, D_{Q}) := P_{k}(Q, D_{Q})$ suppressing the dependence on the field. 
\end{defn}

\begin{prop}
The category of representations of the decorated quiver $(Q, D_{Q})$ is equivalent to the category of right modules over the path algebra $P(Q, D_{Q})$.
\end{prop}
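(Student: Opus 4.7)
The plan is to exhibit mutually inverse constructions between representations of $(Q, D_Q)$ and right $P(Q, D_Q)$-modules, and to verify that these are functorial. Set $R := \oplus_{i \in Q_0} A_i$ and $M := \oplus_{\alpha \in Q_1} M^\alpha$, so that $P(Q, D_Q) = T_R(M)$ by definition, and note that the elements $1_{A_i}$ form a complete system of orthogonal idempotents in $R$, hence also in $P(Q, D_Q)$.

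Starting from a right $P(Q, D_Q)$-module $V$, I would first use the idempotent decomposition to set $V_i := V \cdot 1_{A_i}$, giving $V = \oplus_i V_i$ with each $V_i$ a right $A_i$-module via the inclusion $A_i \hookrightarrow P(Q, D_Q)$. Next, for each arrow $\alpha: i \to j$ and each $m \in M^\alpha$, define $\rho_\alpha(m): V_i \to V_j$ by $v \mapsto v \cdot m$; the output lies in $V_j$ because $m = 1_{A_i} \cdot m \cdot 1_{A_j}$ inside $P(Q, D_Q)$. A direct computation using $(va)\cdot m = v \cdot (am)$ and $v \cdot (ma) = (v \cdot m) \cdot a$ shows that $\rho_\alpha$ is an $(A_i, A_j)$-bimodule map into $\Hom_k(V_i, V_j)$, where the bimodule structure on the target is by pre- and post-composition with the $A_i$- and $A_j$-actions. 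This yields a representation $(V_i, \rho_\alpha)$ of $(Q, D_Q)$.

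For the inverse construction, given a representation $(V_i, \rho_\alpha)$ I would invoke the universal property of the tensor algebra: a right $T_R(M)$-module is the same data as a right $R$-module $W$ together with a right $R$-linear map $W \otimes_R M \to W$. Take $W := \oplus_i V_i$, made a right $R$-module by letting $A_j$ act trivially on $V_i$ for $i \neq j$. Define $W \otimes_R M \to W$ summand-by-summand: on $V_i \otimes_{A_i} M^\alpha$ for $\alpha: i \to j$, send $v \otimes m \mapsto \rho_\alpha(m)(v)$, and send $V_{i'} \otimes_R M^\alpha$ to zero for $i' \neq s(\alpha)$. Left $A_i$-linearity of $\rho_\alpha$ ensures the map descends over the $A_i$-balanced tensor, while right $A_j$-linearity guarantees it is a right $R$-module map into $V_j \subset W$. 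The universal property then supplies a unique right $P(Q, D_Q)$-module structure on $W$.

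The remaining work is bookkeeping: verify that these two constructions are mutually inverse on objects, and that they take morphisms of representations (tuples $(\varphi_i)$ of right $A_i$-module maps intertwining the $\rho_\alpha$) to and from right $P(Q, D_Q)$-module homomorphisms. Both directions are immediate once one observes that a $P(Q, D_Q)$-linear map $\varphi: V \to V'$ automatically preserves the decomposition $V = \oplus_i V \cdot 1_{A_i}$ and commutes with right multiplication by any $m \in M^\alpha$, which is precisely the intertwining condition. I do not expect any real obstacle here: the main subtlety is merely tracking the left/right conventions when translating right multiplication on $V$ into the bimodule map $\rho_\alpha$, which is why condition and care with the idempotents $1_{A_i}$ are the key inputs.
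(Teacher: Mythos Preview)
Your proposal is correct and follows essentially the same approach as the paper: both directions use the idempotent decomposition $V_i := V \cdot 1_{A_i}$ in one direction and the universal property of the tensor algebra $T_R(M)$ in the other, with the bimodule condition on $\rho_\alpha$ matching the $R$-balancing. Your write-up is somewhat more explicit about functoriality on morphisms, but the underlying argument is the same.
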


\begin{proof}

Given a representation $\rho =(V_{i}, \rho_{\alpha})$ of the decorated quiver $D_{Q} =(A_{i}, M^{\alpha})$ one can form a right module $V:=\oplus_{i \in Q_{0}} V_{i}$ of $P(Q, D_{Q})$ with the action defined on generators by 
\begin{align*}
\left ( \sum_{i \in Q_{0}} v_{i} \right ) \cdot a_{j} &:= v_{j} \cdot a_{j} \hspace{1cm} a_{j} \in A_{j}, v_{i} \in V_{i} \\
\left ( \sum_{i \in Q_{0}} v_{i} \right ) \cdot m^{\alpha} &:= \rho_{\alpha}(m^{\alpha}) (v_{s(\alpha)}) \hspace{1cm} m^{\alpha} \in M^{\alpha}. 
\end{align*}
By the universal property of the tensor algebra, this extends to a right module over all of $P(Q, D_{Q})$ since $\rho_{\alpha}$ is an $(A_{s(\alpha)}, A_{t(\alpha)})$-bimodule map, which can be viewed as an $\oplus_{i} A_{i}$-bimodule map.

One can upgrade this map of objects to a functor defined by taking $\rho$ to $V$ as above and by taking a morphism $(f_{i}: V_{i} \rightarrow V_{i}')$ from $\rho$ to $\rho'$ to the map $\oplus_{i} f_{i}: \oplus V_{i} \rightarrow \oplus_{i} V'_{i}$. The condition that the diagrams commutes (i.e. $\rho_{\alpha}' \circ f_{s(\alpha)} = f_{t(\alpha)} \circ \rho_{\alpha}$) implies that $\oplus_{i} f_{i}$, apriori a $\oplus_{i} A_{i}$-module map, is a right $P(Q, D_{Q})$-module map. We now construct the inverse functor on objects, which in fact gives an \emph{isomorphism} of categories in the sense that the compositions equal the identity functor, as opposed to merely being naturally isomorphic.  

A right module for the decorated path algebra is a map $\varphi: T_{\oplus_{i} A_{i}}( \oplus_{\alpha} M^{\alpha})
\rightarrow \End_{k}(V)$, which is determined by its restriction to generators
$$
T^{\leq 1}_{\oplus_{i} A_{i}}( \oplus_{\alpha} M^{\alpha})
= \bigoplus_{i} A_{i} \oplus \bigoplus_{\alpha} M^{\alpha}.
$$
Then one defines $\rho := (\varphi(1_{A_{i}})V, \varphi \mid_{M^{\alpha}})$ where $ \varphi(1_{A_{i}})V$ is given a right $A_{i}$-module structure by 
$$
 \varphi(1_{A_{i}})(v)  \cdot a_{i} :=  \varphi(a_{i})(v) 
$$ 
for $v \in V, a_{i} \in A_{i}$. Clearly $\varphi \mid_{M^{\alpha}}$ is an $(A_{s(\alpha)}, A_{t(\alpha)})$-bimodule map since $\varphi$ is an $(\oplus_{i} A_{i})$-bimodule map.    
\end{proof}




As in the undecorated case, one would like to study the quotient $\Rep_{d}(Q, D_{Q})// G_{d}$. In the next section we consider the symplectic action of $G$ on the cotangent bundle to $\Rep_{d}(Q, D_{Q})$, but in order to describe the moment map we need to impose restrictions on the decoration. This definition will be justified a posteriori by our study of the zero fiber of the moment map.

\begin{defn}
A decoration $D_{Q} = (A_{i}. M^{\alpha})$ satisfies \emph{condition (F)} if 
\begin{itemize}
\item $A_{i}$ is symmetric Frobenius (i.e. $A_{i} \cong \Hom_{k}(A_{i}, k)$ as $A_{i}$-bimodules)
or all $i \in Q_{0}$.
\item $M^{\alpha}= A_{i} \otimes_{k} A_{j}$ or $M^{\alpha} = A_{i} = A_{j}$, for all $\alpha \in Q_{1}$ with $\alpha: i \rightarrow j$.
\end{itemize}
\end{defn} 
 
We will have occasional need to distinguish between the two allowable bimodule decorations in condition (F) and hence write $Q_{1} = Q_{1}' \sqcup Q_{1}''$ where the decorations for $Q_{1}'$ are of the form $A_{j} \otimes_{k} A_{i}$ and the decorations for $Q_{1}''$ are of the form $A_{i} = A_{j}$.


\begin{rem}
In subsection \ref{decorated preproj}, we explain why the Frobenius condition is necessary to define a preprojective algebra satisfying Theorem \ref{Main}. One can alternatively consider additional structure on the bimodules instead of the algebras, as is done in \cite{Dlab}. Symmetry of the Frobenius form is needed to identify the moment map with an evaluation map equivariantly, see Remark \ref{rem:symmetry}.
\end{rem}


\section{Representations of Decorated Preprojective Algebras}  \label{decorated preproj}

\subsection{Recollections on Preprojective Algebras}
Given a quiver $Q= (Q_{0}, Q_{1}, s, t)$ one can define the opposite quiver to be $Q^{op} := (Q_{0}, Q_{1}, t, s)$, which has the same vertices but with all arrows reversed. From this the double quiver is $\overline{Q} := (Q_{0}, Q_{1} \sqcup Q_{1}, s \sqcup t, t \sqcup s)$, which has the same vertices as $Q$ but with both the arrows and the inverted arrows and consequently is insensitive to the orientation of the quiver $Q$. If $\alpha \in Q_{1}$, we write $\alpha^*$ for the corresponding arrow in $Q_{1}^{op}$, and vice versa. 

The representation theory of the double quiver can be described in terms of the representation theory of the original quiver. Fix a dimension vector $d$ and denote by $\Rep_{d}(Q)$ the vector space of representations of $Q$ with dimension vector $d$. Then
\begin{align*}
\Rep_{d}(\overline{Q}) &\overset{\cong}{\longrightarrow} \Rep_{d}(Q) \oplus \Rep_{d}(Q^{op}) \\
\rho &\longmapsto  \rho \mid_{Q} \oplus \rho \mid_{Q^{op}}.
\end{align*}
One has the non-degenerate pairing
$$
\Rep_{d}(Q) \times \Rep_{d}(Q^{op}) \rightarrow k \hspace{1cm} (\rho, \rho') \mapsto \sum_{\alpha \in Q_{1}} tr(\rho(\alpha) \circ \rho'(\alpha^{*})) 
$$
which induces a $k$-linear isomorphism
$$
\Rep_{d}(Q^{op}) \cong \Rep_{d}(Q)^* \ \ (V_{i}, \rho_{\alpha}) \mapsto \left ( \psi \mapsto \sum_{\alpha \in Q_{1}} tr( \psi (\alpha^{*}) \circ \rho_{\alpha}(\alpha)) \right ).
$$
We conclude that,
$$
\Rep_{d}(\overline{Q}) \cong  \Rep_{d}(Q) \oplus \Rep_{d}(Q^{op}) \cong \Rep_{d}(Q) \oplus \Rep_{d}(Q)^{*} =: T^{*}(\Rep_{d}(Q)).
$$



For any finite-dimensional vector space $V$, the vector space $V \oplus V^{*} \cong T^{*}(V)$ carries a canonical symplectic form. Taking $V := \Rep_{d}(Q)$, the symplectic form on $\Rep_{d}(\overline{Q}) \cong T^{*}(\Rep_{d}(Q))$ is explicitly given by:
$$
\omega_{d}: \Rep_{d}(\overline{Q}) \times \Rep_{d}(\overline{Q}) \rightarrow k
$$
$$
\omega_{d}(\rho, \rho') = \sum_{\alpha \in Q_{1}} tr( \rho(\alpha^{*}) \rho'(\alpha)) - tr(\rho(\alpha) \rho'(\alpha^{*}))
$$ 
where $tr: \End_{k}( \oplus_{i \in Q_{0}} k^{d_{i}}) \rightarrow k$ is the usual trace of matrices. 

The action of $G_{d} = \prod_{i \in Q_{0}} GL_{d_{i}}(k)$ on $\Rep_{d}(Q)$ is extended to $\Rep_{d}( \overline{Q})$ by,
$$
g \cdot (x, \phi) = (g \cdot x, \phi \circ (d_{x} \varphi_{g} )^{-1} )
$$
where $d_{x} \varphi_{g}$ is the differential of the action map. This action is Hamiltonian with $G_{d}$-equivariant moment map $\mu_{d}: \Rep_{d}(\overline{Q}) \rightarrow \fg_{d}^{*}$ given by
\begin{align*}
\mu_{d}(\rho)(x) &= \frac{1}{2} \omega( \rho, x \cdot \rho)
= \frac{1}{2} \sum_{\alpha \in Q_{1}} tr( \rho(\alpha^{*}) x \cdot \rho(\alpha) - x \cdot \rho(\alpha)\rho(\alpha^{*})) \\
&= \frac{1}{2} \sum_{\alpha \in Q_{1}} tr ( x \cdot \rho ([\alpha, \alpha^{*}])),
\end{align*}
where $\fg_{d} := \text{Lie}(G_{d})$. We view the moment map as valued in $\fg_{d}$ by post-composing with twice the trace form,
$$
\xymatrix{
\rho \in \Rep_{d}(\overline{Q}) \ar[r] &  \frac{1}{2} tr \left ( - \cdot \sum_{\alpha \in Q_{1}} \rho( [\alpha, \alpha^{*}] \right ) \in \fg^{*} \ar[r] &   \sum_{\alpha \in Q_{1}} \rho( [\alpha, \alpha^{*}] ) \in \fg  
}
$$
Since the second map is an isomorphism, we can identify the moment map with evaluation at $r := \sum_{\alpha \in Q_{1}} [\alpha, \alpha^{*}]$. 

Returning to the algebraic setting, representations of $\overline{Q}$ are right modules over the path algebra 
$$
P(\overline{Q}) = T_{k Q_{0}}(k Q_{1} \oplus k Q_{1}^{op}).
$$
The condition that a representation, $\rho$, is in the zero fiber of the moment map is precisely the condition that $\rho(r) = 0$, i.e. the module descends to a module for the quotient algebra 
$P(\overline{Q})/ \langle r \rangle,$
where $\langle r \rangle$ denotes the two-sided ideal generated by $r$. This algebra was originally defined and studied by Gelfand and Ponomarev, prior to the moment map perspective, since in the Dynkin case it contains the path algebra as a subalgebra as well as all indecomposable preprojective modules for the path algebra \cite{Gelfand}. 

Summarizing this discussion:
\begin{defn}
The \emph{preprojective algebra} associated to a quiver $Q$ and a field $k$ is
$$
\Pi_{k}(Q) := P(\overline{Q})/ \left \langle \sum_{\alpha \in Q_{1}} [\alpha, \alpha^{*}] \right \rangle.
$$
We will usually write $\Pi(Q) := \Pi_{k}(Q)$ suppressing the dependence on the field. 
\end{defn}

\begin{prop} \label{ord moment map}
With the identifications induced from the non-degenerate pairings,
$$
\Rep_{d}(Q) \otimes \Rep_{d}(Q^{op}) \rightarrow k
\hspace{1cm} (\rho, \rho') \mapsto \sum_{\alpha \in Q_{1}} tr( \rho(\alpha) \circ \rho'(\alpha^{*}) )
$$
and
$$
\fg_{d} \otimes \fg_{d} \rightarrow k \hspace{1cm} (X, Y) \mapsto tr(XY)
$$
the following diagram commutes,
$$
\xymatrix{
& T^{*}\Rep_{d}(Q) \ar[rr]^{\mu_{d}} & & \fg_{d}^{*} \ar[rrd]^{\cong} & & \\
\Rep_{d}(\overline{Q}) \ar[ru]^{\cong} \ar[rrrrr]^{\rho \mapsto \rho(\sum_{\alpha \in Q_{1}} [\alpha, \alpha^{*}])} &&&&& \fg_{d}
}
$$
and hence one can view the moment map as evaluation on the element $\sum_{\alpha \in Q_{1}} [\alpha, \alpha^{*}]$. 
\end{prop}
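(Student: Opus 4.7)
The plan is to verify the commutativity of the diagram by an explicit coordinate computation using the quadratic formula for the moment map of a linear symplectic action, together with the nondegeneracy of the trace pairing on $\fg_{d}$.

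For a linear symplectic action on a symplectic vector space, the moment map at a point $\rho$ is the quadratic Hamiltonian $\mu_{d}(\rho)(x) = \tfrac{1}{2}\omega_{d}(\rho, x\cdot\rho)$, where $x\cdot\rho$ denotes the infinitesimal action. Differentiating the $G_{d}$-action on $\Rep_{d}(\overline{Q})$ gives $(x\cdot\rho)(\alpha) = x_{t(\alpha)}\rho(\alpha) - \rho(\alpha)x_{s(\alpha)}$, and similarly for $\alpha^{*}$, which runs from $t(\alpha)$ to $s(\alpha)$. Substituting this into the explicit symplectic form $\omega_{d}(\rho,\rho') = \sum_{\alpha} tr(\rho(\alpha^{*})\rho'(\alpha)) - tr(\rho(\alpha)\rho'(\alpha^{*}))$ and using the cyclic invariance of the trace to move each $x_{i}$ to the front produces four trace terms for each arrow $\alpha$; two pairs coincide after cyclic rearrangement, leaving the expression $\sum_{\alpha} tr(x\cdot\rho([\alpha,\alpha^{*}]))$, as the excerpt asserts. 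Here I view $\rho([\alpha,\alpha^{*}])$ as the block-diagonal element of $\fg_{d}$ whose component at $s(\alpha)$ is $\rho(\alpha^{*})\rho(\alpha)$ and whose component at $t(\alpha)$ is $-\rho(\alpha)\rho(\alpha^{*})$, reflecting the right-module convention that concatenated paths act by composition in reverse order.

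The next step is to invoke the nondegenerate bilinear form $(X,Y)\mapsto tr(XY)$ on $\fg_{d}$, which decomposes as the direct sum of the usual trace forms on each $\fg\fl_{d_{i}}(k)$. This induces an equivariant isomorphism $\fg_{d}^{*} \cong \fg_{d}$ under which, with the normalization specified in the excerpt, the functional $x\mapsto \tfrac{1}{2}tr(x\cdot Y)$ corresponds to $Y$. Applying this to $Y = \rho\bigl(\sum_{\alpha}[\alpha,\alpha^{*}]\bigr)$ yields the bottom arrow of the diagram and completes the proof.

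The only real obstacle is bookkeeping: the source and target decomposition for $\alpha$ versus $\alpha^{*}$ must be tracked carefully when expanding and cyclically permuting traces, and the normalization constants (the $\tfrac{1}{2}$ in the moment map and the ``twice the trace form'' in the $\fg_{d}^{*}\cong\fg_{d}$ identification) must conspire to leave no stray scalar factor. There is no conceptual difficulty; this is the standard computation identifying the moment map for a cotangent bundle of a representation space with evaluation on the preprojective relation.
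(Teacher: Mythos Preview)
Your proposal is correct and follows essentially the same approach as the paper. The paper does not give a separate proof for this proposition; rather, the proposition summarizes the computation carried out in the paragraphs immediately preceding it, which (like your argument) uses the quadratic moment map formula $\mu_{d}(\rho)(x)=\tfrac{1}{2}\omega_{d}(\rho,x\cdot\rho)$, substitutes into the explicit symplectic form, and then post-composes with twice the trace pairing to land in $\fg_{d}$. Your write-up is in fact slightly more careful than the paper's about expanding the infinitesimal action into its source/target components and tracking how the four resulting trace terms collapse in pairs via cyclic invariance.
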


\begin{cor}
The set of representations of dimension vector $d$ in $\Pi(Q)$ is the zero fiber of the moment map $\mu_{d}$.
\end{cor} 

\begin{rem}
Crucially, one needs to identify $\Rep_{d}(\overline{Q}) \cong T^{*}\Rep_{d}(Q)$ as symplectic representations of $G_{d}$. The symplectic structure on $\Rep_{d}(\overline{Q})$ is defined through the equivalence. The preservation of the $G_{d}$-action follows since $G_{d}$ acts on $\Rep_{d}(\overline{Q})$ by conjugation and the pairing uses the trace, which is conjugation-invariant. Additionally, $\fg_{d} \cong \fg^{*}_{d}$ as $G_{d}$-representations with the adjoint and coadjoint actions.  
\end{rem}



\subsection{Decorated Preprojective Algebras}

In this subsection we extend the definition of the preprojective algebra of a quiver to the decorated setting. We justify the term preprojective algebra using the moment map for the action of $G = \oplus_{i \in Q_{0}} GL_{A_{i}}(V_{i})$ on the cotangent bundle of the space of all representations. Namely, we show that the set of representations of the decorated preprojective algebra is the zero fiber of the moment map. 

Recall the decorated path algebra for a decorated quiver $(Q, D_{Q})$ with decoration $D_{Q} = ( A_{i}, M^{\alpha} )$ is $P(Q, D_{Q}) = T_{\oplus A_{i}}(\oplus_{\alpha} M^{\alpha})$. The decorated preprojective algebra is a quotient of the decorated path algebra for the \emph{double} decorated quiver, which we define now.

\begin{defn}
Given a decorated quiver $(Q, D_{Q})$, the \emph{opposite} decorated quiver $(Q^{op}, D_{Q}^{op})$ is defined by
$$
D_{Q}^{op} := ( A_{i}, M^{\alpha^{*}}), \ \ M^{\alpha^{*}}:= \Hom_{A_{s(\alpha)} \otimes A_{t(\alpha)}}(M^{\alpha}, A_{s(\alpha)} \otimes A_{t(\alpha)}) 
$$
and the \emph{double} decorated quiver $(\overline{Q}, \overline{D_{Q}})$ is defined by $\overline{D_{Q}} := D_{Q} \cup_{D_{Q_{0}}} D_{Q}^{op}$.
\end{defn} 

Suppose $D_{Q}$ satisfies condition (F) and partition the arrows into $Q_{1}' \sqcup Q_{1}''$ where the former has label $M^{\alpha} =A_{s(\alpha)} \otimes A_{t(\alpha)}$ and the latter has label $M^{\alpha} = A_{s(\alpha)}= A_{t(\alpha)}$. Additionally, pick a basis $\{ e^{i}_{j} \}_{j}$ for $A_{i}$ as a vector space and use the Frobenius form $\lambda_{i}: A_{i} \rightarrow k$ to build a dual basis $\{ f^{i}_{l} \}_{l}$ such that $\lambda_{i}(e^{i}_{j} f^{i}_{l}) = \delta_{j, l}$, for all $i \in Q_{0}$.

\begin{defn} \label{def:dec preproj alg}
Using notation from the previous paragraph, the \emph{decorated preprojective algebra} $\Pi_{k}(Q, D_{Q})$ is defined to be the quotient of the decorated path algebra of the double $P(\overline{Q}, \overline{D_{Q}})$ by the two-sided ideal generated by
$$
r:= \sum_{\alpha \in Q_{1}'}
 \left ( \sum_{i} e^{s(\alpha)}_{i} \otimes 1 \otimes f^{s(\alpha)}_{i}  - \sum_{j} e^{t(\alpha)}_{j} \otimes 1 \otimes f^{t(\alpha)}_{j} \right )
 + 
 \sum_{\alpha \in Q_{1}''} [1_{M^{\alpha}}, 1_{M^{\alpha^{*}}}].
$$
We will usually write $\Pi(Q, D_{Q}) := \Pi_{k}(Q, D_{Q})$, suppressing the dependence on the field. 
\end{defn}

The usual preprojective relation in $k \overline{Q}_{1} \otimes_{kQ_{0}} k\overline{Q}_{0}$ is $kQ_{0}$-central, since it is concentrated on the diagonal. Similarly, we want $r$ to be an $(\oplus A_{i})$-central element of $\oplus M^{\alpha} \otimes_{(\oplus A_{i})} \oplus M^{\alpha}$, so we choose for each $i$, the dual element to the Frobenius pairing, $k \rightarrow A_{i} \otimes A_{i}$, which when written in bases $\{ e^{i}_{j} \}$ and dual bases $\{ f^{i}_{j} \}$ is the $A_{i}$-central element $\sum_{j} e^{i}_{j} \otimes_{k} f^{i}_{j}$. For arrows in $Q_{1}''$, $M^{\alpha} \otimes_{A} M^{\alpha^*} \cong A \otimes_{A} A \cong A$ has the $A$-central element $1_{M^{\alpha} \otimes_{A} M^{\alpha^*}} = 1_{M^{\alpha}} \otimes 1_{M^{\alpha^*}}$, so one need not use the Frobenius form in the definition of the decorated preprojective algebra, yet we still require the existence of such a form for desirable properties, see Remark \ref{rem:symmetry}.

\begin{rem} \label{rem:well-defined}
The above definition depends only on the decorated quiver $(Q, D_{Q})$ and the field $k$. 
To see this, observe that for each $\alpha: i \rightarrow j \in Q'_{1}$ the summand of $r$ corresponding to $\alpha$ lies in
$$
A_{i} \otimes A_{i} \cong A_{i} \otimes k \otimes A_{i} \subset A_{i} \otimes A_{j} \otimes A_{i}
\cong (A_{i} \otimes A_{j}) \otimes_{A_{j}} (A_{j} \otimes A_{i}) \cong M^{\alpha} \otimes_{A_{j}} M^{\alpha^{*}}
$$
and hence it suffices to show that, in any Frobenius algebra $A$ with basis $\{ e_{i} \}$ and dual basis $\{ f_{i} \}$, the ideal generated by $c := \sum_{i} e_{i} \otimes f_{i}$ doesn't depend on the choice of basis nor the form. For independence of basis, notice that $c$ can be represented canonically as the comultiplication of the unit. Next, observe that a Frobenius form can be viewed as the image of $1_{A}$ under a left $A$-module isomorphism $\varphi: A \rightarrow A^{*}$. Hence, two Frobenius forms $\lambda_{1}, \lambda_{2}: A \rightarrow k$ each give rise to $\varphi_{1}, \varphi_{2}: A \rightarrow A^{*}$ such that $\varphi_{2}^{-1} \circ \varphi_{1}: A \rightarrow A$ is a left $A$-module isomorphism. Such a map is determined by $u := \varphi_{2}^{-1} \circ \varphi_{1}(1_{A})$, which is invertible. Hence 
$$
\lambda_{1} = \varphi_{1}(1) = \varphi_{2}(u) = \lambda_{2} \circ R_{u}
$$ 
where $R_{u}$ is right multiplication by $u \in A^{\times}$. We conclude that the canonical sums $c_{1}$ and $c_{2}$ satisfy $c_{1} = c_{2}u$ and hence generate the same two-sided ideal in $A \otimes_{k} A$. \\
\end{rem}

\begin{exam}
This phenomenon is already visible in the setting of the four element algebra $A = \bF_{2}[x]/(x^{2})$. Here $\lambda: A \rightarrow \bF_{2}$ is Frobenius if $(x) \not \subset \text{ker}(\lambda)$ or equivalently if $\lambda(x) \neq 0$ and hence is 1. So there are two Frobenius forms: $\lambda_{1}(1) = 0$ and $\lambda_{2}(1) = 1$. They are related by the invertible element $1+x \in A$ as $\lambda_{1}(1) = 0 = \lambda_{2}( 1(1+x) )$ and $\lambda_{1}(x) = 1= \lambda_{2}(x(1+x))$. The basis $\{ 1, x \}$ has dual basis $\{ x, 1 \}$ under $\lambda_{1}$ and dual basis $\{ 1, 1+x \}$ under $\lambda_{2}$. Hence $c_{1} = 1 \otimes x + x \otimes 1$ and 
$$
c_{2} = 1 \otimes x + x \otimes (1 + x) = 1 \otimes x(1+x) + x \otimes 1(1+x) = c_{1}(1+x)
$$
both generate the ideal $(c_{1})= \{ c_{1}, c_{2}, c_{1}+c_{2} \} \subset A \otimes A$. 
\end{exam}

\begin{rem} \label{rem: motivation for dec preproj definition}
This explains the reason for the dichotomy between types of arrows and the resulting asymmetry in the definition. Namely, it's an artifact of the identifications,
$$
\End_{A}(A) \cong A \cong M^{\alpha} \otimes_{A} M^{\alpha^{*}}  \hspace{1cm} id \mapsto 1 \mapsto 1 \otimes_{A} 1
$$
as $k$-bimodules for $M^{\alpha} ={}_{A}A_{A} \cong M^{\alpha^{*}}$ and 
$$
\End_{k}(A) \cong A \otimes_{k} A^{*} \cong M^{\alpha} \otimes_{k} M^{\alpha^{*}} 
\hspace{1cm} id \mapsto c = \sum_{i} e_{i} \otimes_{k} f_{i} \mapsto \sum_{i} e_{i} \otimes 1 \otimes f_{i}
$$
as $A$-bimodules for $M^{\alpha} ={}_{A}A_{k}$ an $(A, k)$-bimodule. 
\end{rem}

The key result motivating the definition is the following. Recall fixing $(Q, D_{Q})$ and a dimension vector $d$, we have the action of $G_{d} = \prod_{i \in Q_{0}} GL_{A_{i}}(A_{i}^{d_{i}})$ on $\Rep_{d}(Q, D_{Q})$ with Lie algebra $\fg_{d} = \oplus_{i \in Q_{0}} \fg \fl_{A_{i}}(A_{i}^{d_{i}})$. Let $\mu_{d} : T^{*}(\Rep_{d}(Q, D_{Q})  \rightarrow \fg_{d}^{*}$ denote the moment map corresponding to the Hamiltonian action on the cotangent space coming from the action on the base. In this setting, we can generalize Proposition \ref{ord moment map} as follows:

\begin{thm} \label{thm moment map}
With the identifications induced from the non-degenerate pairings,
$$
\Rep_{d}(Q, D_{Q}) \otimes \Rep_{d}(Q^{op}, D_{Q}^{op}) \rightarrow k
\hspace{3cm}
\fg_{d} \otimes \fg_{d} \rightarrow k 
\hspace{.5cm} 
$$
$$
(\rho, \rho') \mapsto 
\sum_{\alpha \in Q_{1}} \lambda_{A_{s(\alpha)}} \circ tr_{A_{s(\alpha)}}( \rho(1_{M^{\alpha}}) \circ \rho'(1_{M^{\alpha^{*}}}) )
\hspace{.8cm}
(X, Y) \mapsto \sum_{i \in Q_{0}} \lambda_{i} \circ tr_{A_{i}}(XY) 
$$
the following diagram commutes,
$$
\begin{xy}
(20, 15)*+{T^{*}\Rep_{d}(Q, D_{Q})} = "A" ; 
(70, 15)*+{\fg_{d}^{*}} = "B" ;
(0,0)*+{\Rep_{d}(\overline{Q}, \overline{D_{Q}})} = "C" ;
(90, 0)*+{\fg_{d}} = "D" ;
{\ar@{->}^{\mu_{d}} "A"; "B"}  ;
{\ar@{->}^{\cong} "C"; "A"}  ;
{\ar@{->}^{\cong} "B"; "D"}  ;
{\ar@{->}^{(\rho(1_{M^{\alpha}}), \rho(1_{M^{\alpha^{*}}}) ) \mapsto \rho(r)} "C"; "D"}  ;
\end{xy}
$$
and hence one can view the moment map as evaluation on the element $r$. 
\end{thm}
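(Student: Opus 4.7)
The overall strategy is to mimic the proof of Proposition \ref{ord moment map} arrow-by-arrow, replacing the duality $\Rep_{d}(Q^{op}) \cong \Rep_{d}(Q)^{*}$ with its bimodule-theoretic analog, and handling separately the two classes $Q_{1}'$ and $Q_{1}''$ of arrows, which produce genuinely different canonical elements in $M^{\alpha} \otimes_{A_{t(\alpha)}} M^{\alpha^{*}}$. Condition (F) is precisely what is needed to make all identifications compatible; it enters through the non-degeneracy and symmetry of each Frobenius form $\lambda_{i}$, and through the dual-basis element that canonically represents the coevaluation.

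First I would verify that the two pairings stated in the theorem are non-degenerate and $G_{d}$-equivariant. Non-degeneracy of the pairing on $\fg_{d} \otimes \fg_{d}$ is immediate from non-degeneracy of each $\lambda_{i}$ together with the non-degeneracy of the $A_{i}$-valued trace on $\fg \fl_{A_{i}}(A_{i}^{d_{i}})$. For the pairing on $\Rep_{d}(Q, D_{Q}) \otimes \Rep_{d}(Q^{op}, D_{Q}^{op})$, one applies hom-tensor adjunction arrow-by-arrow together with the definition $M^{\alpha^{*}} = \Hom_{A_{s(\alpha)} \otimes A_{t(\alpha)}}(M^{\alpha}, A_{s(\alpha)} \otimes A_{t(\alpha)})$, and checks that the resulting map identifies $\Rep_{d}(Q^{op}, D_{Q}^{op})$ with the $k$-linear dual of $\Rep_{d}(Q, D_{Q})$. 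Combined with $\Rep_{d}(\overline{Q}, \overline{D_{Q}}) \cong \Rep_{d}(Q, D_{Q}) \oplus \Rep_{d}(Q^{op}, D_{Q}^{op})$, this furnishes the left vertical isomorphism to $T^{*}\Rep_{d}(Q, D_{Q})$ and equivariantly transports the canonical symplectic form.

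Next I would compute the moment map arrow-by-arrow using $\mu_{d}(\rho)(X) = \tfrac{1}{2} \omega_{d}(\rho, X \cdot \rho)$ and identify the result with $\sum_{i} \lambda_{i} \circ tr_{A_{i}}(X_{i} \cdot \rho(r)_{i})$. For $\alpha \in Q_{1}''$, the representation is determined by the single element $\rho(1_{M^{\alpha}}) \in \Hom_{k}(V_{s(\alpha)}, V_{t(\alpha)})$, and the computation reduces essentially verbatim to the undecorated case, contributing the commutator $[1_{M^{\alpha}}, 1_{M^{\alpha^{*}}}]$ to $r$. For $\alpha \in Q_{1}'$, the representation $\rho \mid_{M^{\alpha}}$ is a bimodule map out of $A_{s(\alpha)} \otimes_{k} A_{t(\alpha)}$ determined by its value on $1 \otimes 1$, but now the pairing with $M^{\alpha^{*}}$ factors through the canonical coevaluation $A_{i} \to A_{i} \otimes_{k} A_{i}^{*} \cong A_{i} \otimes_{k} A_{i}$ dual to multiplication under $\lambda_{i}$. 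This coevaluation is precisely $\sum_{l} e^{i}_{l} \otimes f^{i}_{l}$, which is why this sum appears in $r_{\alpha}$; symmetry of $\lambda_{i}$ ensures its basis-independence and centrality, as in Remark \ref{rem:well-defined}. The difference between the source and target sums in $r_{\alpha}$ reflects the two halves of the infinitesimal action on $M^{\alpha}$, one acting on $V_{s(\alpha)}$ by precomposition and the other on $V_{t(\alpha)}$ by postcomposition.

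The main obstacle will be performing this trace-Frobenius bookkeeping cleanly for the $Q_{1}'$ arrows, showing that $\tfrac{1}{2} \omega_{d}(\rho, X \cdot \rho)$ matches $\sum_{i} \lambda_{i} \circ tr_{A_{i}}(X_{i} \cdot \rho(r)_{i})$ exactly, including signs. The calculation must track carefully how the $A_{i}$-valued trace descends to a $k$-valued trace through $\lambda_{i}$, and how antisymmetry between the two basis sums in $r_{\alpha}$ reproduces the sign structure that in the undecorated case comes from the commutator $[\alpha, \alpha^{*}]$. Once this identification is established, the theorem's second assertion follows immediately: right modules over $\Pi(Q, D_{Q})$ are precisely those right modules over $P(\overline{Q}, \overline{D_{Q}})$ on which $r$ acts by zero, hence are exactly the preimage $\mu_{d}^{-1}(0)$, and the coarse moduli space description $\mu_{d}^{-1}(0) /\!/ G_{d}$ is the standard GIT identification of isomorphism classes of semisimple representations with the affine quotient.
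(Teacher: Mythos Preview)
Your overall architecture matches the paper's: reduce to a single arrow and handle the cases $\alpha \in Q_{1}'$ and $\alpha \in Q_{1}''$ separately. The $Q_{1}''$ case is indeed the easy one, and your description is adequate (the paper's Proposition \ref{prop:momentmapcase2} is exactly the straightforward computation you sketch). The paper makes one further reduction you do not: since the portion of $r$ at vertex $i$ coming from an arrow $\alpha:i\to j$ in $Q_{1}'$ involves only the unit of $A_{j}$, one can replace $A_{j}$ by $k$ and work with the single-arrow quiver $A \overset{_{A}A_{k}}{\longrightarrow} k$.

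The gap is in the $Q_{1}'$ case. You correctly locate the difficulty---relating the ordinary $k$-trace used in the symplectic pairing to $\lambda_{A}\circ tr_{A}$ used in the pairing on $\fg_{d}$---and you correctly observe that the coevaluation element $\sum_{l} e_{l}\otimes f_{l}$ must appear. But you then label this ``the main obstacle'' and do not actually resolve it. The issue is concrete: for $(f,g)\in \Hom_{k}(A^{d_{1}},k^{d_{2}})\oplus \Hom_{k}(k^{d_{2}},A^{d_{1}})$, the composite $g\circ f \in \End_{k}(A^{d_{1}})$ is only $k$-linear, not $A$-linear, so it does not lie in $\fg_{d}$ and one cannot simply write $\rho(r)=g\circ f$. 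What is needed is a map $\Phi:\End_{k}(A)\to \End_{A}(A)$, extended entrywise to $\Mat_{d}(\Phi):\End_{k}(A^{d})\to \End_{A}(A^{d})$, satisfying two properties: (i) $tr = \lambda\circ tr_{A}\circ \Mat_{d}(\Phi)$, so that the $k$-trace factors through $\lambda\circ tr_{A}$; and (ii) $\Mat_{d}(\Phi)(\rho(1\otimes_{k}1)) = \rho(\sum_{l} e_{l}\otimes_{k} f_{l})$, so that pushing the $k$-linear composite into $\End_{A}$ exactly produces the dual-basis element in $r$. The paper defines $\Phi(\phi)(a)=\sum_{l}\phi(e_{l})f_{l}a$ and verifies (i) and (ii) in Lemmas \ref{Def Phi}--\ref{lem:restrictiondual}. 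Your gesture at ``coevaluation dual to multiplication under $\lambda_{i}$'' is pointing at $\Phi$, but without constructing it and proving these two facts the diagram chase does not close.
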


Note that $\rho(r)$ is, a priori, an element of $\oplus_{i \in Q_{0}} \fg \fl_{k}(A_{i}^{d_{i}})$, but in fact is $\oplus A_{i}$-linear since $e_{i}r$ is $A_{i}$-central, hence an element of $\fg_{d}$. Additionally, since each pairing is $G_{d}$-invariant, we have identified the moment map with evaluation on $r$, $G_{d}$-equivariantly
 
\begin{rem}
For each $A_{i}$, to apply $tr_{A_{i}}: \Mat_{d_{i}}(A_{i}) \rightarrow A_{i}$ one needs to precompose with an identification $\fg \fl_{A_{i}}(A_{i}^{d_{i}}) \cong \Mat_{d_{i}}(A_{i})$. Although $tr_{A_{i}}$ depends on the choice of identification, $\lambda_{A_{i}} \circ tr_{A_{i}}$ does not, as $\lambda_{A_{i}}$ is symmetric. 
\end{rem}
 
\begin{cor}
The set of representations of dimension vector $d$ in $\Pi(Q, D_{Q})$ is the zero fiber of the moment map $\mu_{d}$.
\end{cor} 

The remainder of this section is dedicated to a proof of the above theorem, justifying the terminology preprojective algebra in a geometric context. 

Since $Q$ is a finite quiver, $r =  \sum_{i \in Q_{0}} e_{i} r$ can be computed locally at each vertex, where it is given by a sum over all incoming arrows. Hence it suffices to prove the above result in the context of a quiver with a single decorated arrow. Such decorated quivers satisfying condition (F) take the form,
$$
\xymatrix{
\text{(I)} \ \  A_{1} \ar[rr]^{ A_{1} \otimes_{k} A_{2} } & & A_{2} 
&    
\text{(II)} \ \ A \ar[rr]^{A} &  & A
}
$$
$$
\xymatrix{
\text{(III)} \ \ A \ar[rr]^{A \otimes_{k} A} & & A &
\text{(IV)} \ \ A \circlearrowleft A   
& & 
\text{(V)} \ \ A \circlearrowleft A \otimes_{k} A
}
$$
where $A, A_{1},$ and $A_{2}$ are symmetric Frobenius algebras. The formula for the moment map in cases (IV) and (V) follow from the (II) and (III) cases, respectively, by post-composing with the map $d\Delta^{*}: \fg_{d}^{*} \oplus \fg_{d}^{*} \rightarrow \fg_{d}^{*}$, induced from the diagonal map $\Delta: G_{d} \rightarrow G_{d} \times G_{d}$. Next notice (III) is a special case of (I), when $A_{1} = A_{2} = A$. Moreover the definition of $e_{1}r$ uses only the unit $k \subset A_{2}$ and hence one can further reduce to verifying the theorem in the two cases:
$$
\xymatrix{
Q' := A \ar[rr]^{_{A} A_{k}} & & k 
& \text{or} &  
Q'' := A \ar[rr]^{A} &  & A.
}
$$
In these cases, representations of the double decorated quiver have a concrete description, owing to the cyclicity of the bimodules. That is, if $d=(d_{1}, d_{2})$, then
\begin{align*}
&\Rep_{d}( \overline{Q'}) \\
& \ = 
\Hom_{A-\text{mod}}( {}_{A} A_{k}, \Hom_{k}(A^{d_{1}}, k^{d_{2}})) \oplus \Hom_{\text{mod}-A}( {}_{k} A_{A}, \Hom_{k}(A^{d_{1}}, k^{d_{2}}) ) \\
& \ \cong \Hom_{k}(A^{d_{1}}, k^{d_{2}}) \oplus \Hom_{k}(k^{d_{2}}, A^{d_{1}})
\end{align*}
and
\begin{align*}
&\Rep_{d}(\overline{Q''}) \\
& \ = \Hom_{A-\text{bimod}}( _{A} A_{A}, \Hom_{k}(A^{d_{1}}, A^{d_{2}})) \oplus \Hom_{A-\text{bimod}}( _{A} A_{A}, \Hom_{k}(A^{d_{2}}, A^{d_{1}}) ) \\
& \ \cong \Hom_{A}(A^{d_{1}}, A^{d_{2}}) \oplus \Hom_{A}(A^{d_{2}}, A^{d_{1}}).
\end{align*} 
In each case, the theorem reduces to much more concrete statements, given respectively by Proposition \ref{prop:momentmapcase} and Proposition \ref{prop:momentmapcase2}.

\begin{prop} \label{prop:momentmapcase}
With the identifications induced from the non-degenerate pairings,
\begin{align*}
\Hom_{k}(A^{d_{1}}, k^{d_{2}}) \otimes \Hom_{k}(k^{d_{2}}, A^{d_{1}}) \rightarrow k &
\hspace{1cm} (f, g) \mapsto tr(f \circ g)  \\
\End_{A}(A^{d_{1}}) \otimes \End_{A}(A^{d_{1}}) \rightarrow k & \hspace{1cm} (X, Y) \mapsto \lambda_{A} \circ tr_{A}(X \circ Y) \\
\End_{k}(k^{d_{2}}) \otimes \End_{k}(k^{d_{2}}) \rightarrow k & \hspace{1cm} (X, Y) \mapsto tr(X \circ Y)
\end{align*}
the following diagram commutes:
$$
\begin{xy}
(8, 15)*+{\Hom_{k}(A^{d_{1}}, k^{d_{2}}) \oplus \Hom_{k}(A^{d_{1}}, k^{d_{2}})^{*}} = "A" ; 
(68, 15)*+{\End_{A}(A^{d_{1}})^{*}\oplus \End_{k}(k^{d_{2}})^{*}} = "B" ;
(0,0)*+{\Hom_{k}(A^{d_{1}}, k^{d_{2}}) \oplus \Hom_{k}(k^{d_{2}}, A^{d_{1}})} = "C" ;
(76, 0)*+{\End_{A}(A^{d_{1}}) \oplus \End_{k}(k^{d_{2}})} = "D" ;
{\ar@{->}^{\hspace{.75cm} \mu_{d}} "A"; "B"}  ;
{\ar@{->}^{\cong} "C"; "A"}  ;
{\ar@{->}^{\cong} "D"; "B"}  ;
{\ar@{->}^{\hspace{.75cm}(\rho({}_{A}1_{k}), \rho({}_{k}1_{A})) \mapsto \rho(r)} "C"; "D"}  ;
\end{xy}
$$
where $r = 1 \otimes_{A} 1 - \sum e_{i} \otimes_{k} f_{i}$.
\end{prop}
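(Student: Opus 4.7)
The plan is to reduce the commutativity of the diagram to a single Frobenius-duality identity relating the $A$-trace and the $k$-trace on endomorphisms of $A^{d_1}$. First, I would unpack the moment map in coordinates: the $G_d$-action on $V := \Hom_k(A^{d_1}, k^{d_2})$ by $(h_1, h_2)\cdot f = h_2\circ f\circ h_1^{-1}$ has infinitesimal generator $(X_1, X_2)\cdot f = X_2 f - f X_1$. Using the stated pairing $V^*\cong \Hom_k(k^{d_2}, A^{d_1})$, the standard cotangent-lift formula together with cyclicity of $tr_k$ yields
$$
\mu_d(f, g)(X_1, X_2) = tr_k\bigl(X_2\circ (fg)\bigr) - tr_k\bigl(X_1\circ (gf)\bigr),
$$
up to a sign fixed by the chosen symplectic convention. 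Next, reading paths left-to-right as in the definition of the path algebra,
$$
\rho(1 \otimes_A 1) = f\circ g, \qquad \rho\Bigl(\sum_i e_i \otimes_k f_i\Bigr)(v) = \sum_i g\bigl(f(v\cdot e_i)\bigr)\cdot f_i,
$$
and a brief check using the centrality of the Casimir $\sum_i a e_i \otimes f_i = \sum_i e_i \otimes f_i a$ verifies that the second map is $A$-linear, hence an element of $\End_A(A^{d_1})$, consistent with the remark that $e_i r$ is $A_i$-central.

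The heart of the proof is the following Frobenius-duality identity: for every $k$-linear $h\in \End_k(A^{d_1})$ the map $\tilde h(v) := \sum_i h(v e_i)\, f_i$ is $A$-linear, and for every $X\in \End_A(A^{d_1})$,
$$
\lambda_A\bigl(tr_A(X\circ \tilde h)\bigr) = tr_k(X\circ h).
$$
I would prove this by expanding in the standard $A$-basis $\{E_l\}$ of $A^{d_1}$ together with its refinement to the $k$-basis $\{E_l e_i\}$; the identity then reduces to the dual-basis relation $a = \sum_i \lambda_A(a f_i)\, e_i$ valid in $A$. Applied with $h := gf$, this identifies the $X_1$-coefficient of $\mu_d(f, g)$ with the image of $\rho(\sum_i e_i \otimes_k f_i)$ under $\fg_d^*\cong\fg_d$, while the $X_2$-coefficient matches $\rho(-1\otimes_A 1)$ tautologically from the definition of $tr_k$. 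Non-degeneracy of both pairings on $\fg_d$ then follows from non-degeneracy of $\lambda_A$, and the symmetry of $\lambda_A$ is exactly what keeps these identifications $G_d$-equivariant.

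The main obstacle is the identity above: it is the point at which the dual of the inclusion $\End_A(A^{d_1})\hookrightarrow \End_k(A^{d_1})$ is retracted, via Frobenius duality, to land in $\End_A(A^{d_1})$ itself. Without a Frobenius form on $A$ this retraction is not available, so the moment map would not factor through $\fg_d^*\cong\fg_d$ in the expected way. This is the structural reason condition (F) cannot be relaxed without reformulating the proposition.
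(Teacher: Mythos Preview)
Your proposal is correct and follows essentially the same approach as the paper. Your Frobenius-duality identity $\lambda_A(tr_A(X\circ\tilde h)) = tr_k(X\circ h)$ is precisely the content of the paper's Lemmas~\ref{Def Phi}--\ref{lem:restrictiondual}, which package the retraction as a map $\Phi:\End_k(A)\to\End_A(A)$, $\Phi(\phi)(a)=\sum_i \phi(e_i)f_i a$ (agreeing with your $\tilde h$ by the Casimir centrality you invoke), and then establish the factorization $tr_k = \lambda_A\circ tr_A\circ \Mat_d(\Phi)$ before specializing to $h=g\circ f$.
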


Roughly speaking, in the ordinary case two applications of the trace pairing cancel, yielding a simplified expression for the moment map. From this perspective, the main technical difficulty arises from the need to relate the trace of an $A$-linear map (regarded as a $k$-linear map) with the Frobenius form. 

More precisely, if $(f, g) \in \Hom_{k}(A^{d_{1}}, k^{d_{2}}) \oplus \Hom_{k}(k^{d_{2}}, A^{d_{1}})$ and  \\
$(\varphi, \psi) \in \End_{A}(A^{d_{1}}) \oplus \End_{k}(k^{d_{2}})$ 
then in the above diagram,
$$
\begin{xy}
(10, 25)*+{(f, tr(g \circ -))} = "A" ; 
(80, 25)*+{ tr(f \circ g \circ -) + tr(-g \circ f \circ -)} = "B" ;
(80, 15)*+{\lambda_{A} \circ tr_{A}(\varphi \circ -) + tr(\psi \circ -)} = "B'" ;
(80, 20) *+{ \rotatebox{90}{=} } = "B''" ;
(0,0)*+{(f, g)} = "C" ;
(90, 0)*+{(\varphi, \psi)} = "D" ;
{\ar@{|->} "A"; "B"}  ;
{\ar@{|->} "C"; "A"}  ;
{\ar@{|->} "D"; "B'"}  ;
{\ar@{|->}^{?} "C"; "D"}  ;
\end{xy}
$$
In our setting, $f = \rho({}_{A}1_{k})$ and $g= \rho({}_{k}1_{A})$ and so we need to find $\varphi \in \End_{A}(A^{d_{1}})$ and $\psi \in \End_{k}(k^{d_{2}})$ such that,
$$
tr( \rho(-{}_{A}1 \otimes_{k} 1_{A} ) \circ -)
= \lambda_{A} \circ tr_{A}(\varphi \circ -)
$$
and
$$
tr( \rho( {}_{k} 1 \otimes_{A} 1_{k} ) \circ -)
= tr(\psi \circ -).
$$
Clearly, $\psi = \rho({}_{k}1 \otimes_{A} 1_{k} )$ and hence the moment map at the vertex decorated with $k$ is evaluation at ${}_{k} 1 \otimes_{A} 1_{k}$, playing the role of $\alpha \circ \alpha^{*}$ in the ordinary case.

However, $\varphi \neq \rho(-1 \otimes_{k} 1)$, which need not be $A$-linear in general. To remedy the situation, we will define a map $\Phi: \End_{k}(A) \rightarrow \End_{A}(A)$ such that 
$$
\varphi = \Mat_{d}(\Phi)(\rho(-1 \otimes_{k} 1))= \rho \left ( -\sum e_{i} \otimes f_{i} \right ),
$$ 
completing the proof of the proposition. 

We now present a series of lemmas about Frobenius algebras. Since the subject matter is classical and the proofs are elementary, the following is probably well known. Nonetheless we provide short proofs for completeness of exposition.

\begin{lem} \label{Def Phi}
Let $A$ be a Frobenius algebra with multiplication $\mu$ and form $\lambda: A \rightarrow k$. Define $\Phi: \End_{k}(A) \rightarrow \End_{A}(A)$ by $\Phi(\phi)(a) = \sum_{i} \phi(e_{i}) f_{i} a$, where $\{ e_{i} \}$ is a basis for $A$ over $k$ with dual basis $\{ f_{i} \}$. Then
$$
\xymatrix{
\End_{k}(A) \ar[d]_{\cong} \ar[rd]^{\Phi} \ar[rrr]^{\text{tr}} & & & k \\
A \otimes_{k} A^{*} \ar[d]_{\cong} & \End_{A}(A) \ar[d]_{\cong}   \\
A \otimes_{k} A \ar[r]^{\mu} & A  \ar[rruu]^{\lambda}
}
$$
commutes. In particular, $tr(\phi) = \lambda( \Phi( \phi)(1) )$ for $\phi \in \End_{k}(A)$.
\end{lem}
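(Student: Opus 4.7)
The plan is to unwind each isomorphism in the diagram explicitly using the basis $\{e_{i}\}$ and its $\lambda$-dual $\{f_{i}\}$, then verify commutativity of the left square and the outer triangle by chasing a general element $\phi \in \End_{k}(A)$. The argument is short; the only care is in pinning down left/right conventions for the Frobenius identification $A \cong A^{*}$.

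First I would check that $\Phi(\phi)$ actually lands in $\End_{A}(A)$: right $A$-linearity is immediate from $\Phi(\phi)(ab) = \sum_{i} \phi(e_{i}) f_{i} a b = \Phi(\phi)(a)\cdot b$. Under the standard identification $\End_{A}(A) \xrightarrow{\sim} A$, $\psi \mapsto \psi(1)$, this sends $\Phi(\phi)$ to $\sum_{i} \phi(e_{i})f_{i}$. Next I would make the left-vertical isomorphisms explicit: the canonical map $\End_{k}(A) \xrightarrow{\sim} A \otimes_{k} A^{*}$ sends $\phi \mapsto \sum_{i} \phi(e_{i})\otimes e_{i}^{*}$, where $\{e_{i}^{*}\}$ is the linear dual basis, while the Frobenius map $A \xrightarrow{\sim} A^{*}$, $a \mapsto \lambda(-\cdot a)$, identifies $f_{i}$ with $e_{i}^{*}$ because $\lambda(e_{j} f_{i}) = \delta_{ji}$. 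Hence the composite carries $\phi$ to $\sum_{i} \phi(e_{i})\otimes f_{i} \in A \otimes_{k} A$, and $\mu$ takes this to $\sum_{i} \phi(e_{i})f_{i} \in A$, matching the $\Phi$-route. This closes the left square of the diagram.

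For the outer triangle, by definition $\operatorname{tr}(\phi) = \sum_{i} e_{i}^{*}(\phi(e_{i}))$, and $e_{i}^{*} = \lambda(-\cdot f_{i})$, so $\operatorname{tr}(\phi) = \sum_{i} \lambda(\phi(e_{i})f_{i}) = \lambda(\Phi(\phi)(1))$, as required. The main subtlety, rather than a genuine obstacle, is making the side conventions for the Frobenius identification consistent; symmetry of $\lambda$ (condition (F)) ensures that $a \mapsto \lambda(-\cdot a)$ and $a \mapsto \lambda(a\cdot -)$ yield the same map $A \cong A^{*}$, and this is also what underlies the basis-independence of $\Phi$ already recorded in Remark~\ref{rem:well-defined}.
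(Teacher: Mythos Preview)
Your proof is correct and follows essentially the same route as the paper: both unwind the left vertical isomorphisms explicitly to get $\phi \mapsto \sum_i \phi(e_i)\otimes f_i \mapsto \sum_i \phi(e_i)f_i = \Phi(\phi)(1)$, and then compute $\operatorname{tr}(\phi)$ in the basis $\{e_i\}$ using $e_i^* = \lambda(-\cdot f_i)$. Your additional check that $\Phi(\phi)$ is right $A$-linear is a welcome clarification the paper omits; the closing remark on symmetry is not needed here since the lemma only assumes $A$ Frobenius, and your argument does not use it.
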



\begin{proof}
$\Phi$ is \emph{defined} so the diagram commutes. In more detail,
$$
\xymatrix@C-1.7mm{
\End_{k}(A) \ar[rrr] & & & A \otimes_{k} A^{*} \ar[rrr] & & & A \otimes_{k} A \ar[rrr] & & & A\hspace{1.5cm} &
}
$$
$$
\xymatrix@-.9mm{
\phi \ar@{|->}[rr] & &
 \sum_{i} \phi(e_{i}) \otimes_{k} \lambda \circ \mu(-, f_{i}) \ar@{|->}[r] & \sum_{i} \phi(e_{i}) \otimes_{k} f_{i} \ar@{|->}[rr] & & \sum_{i} \phi(e_{i}) f_{i}
}
$$
Since $\Phi(\phi)(1) := \sum_{i} \phi(e_{i}) f_{i}$ the lower left half of the diagram commutes. \\
\\
For the rest of the diagram, compute $tr$ using the basis $\{ e_{i} \}$. If $\phi(e_{i})$ is expanded as $\sum_{j} \alpha_{ij} e_{j}$ then,
$$
\alpha_{ii} = \lambda \circ \mu ( \sum_{j} \alpha_{ij} e_{j}, f_{i})
= \lambda \circ \mu ( \phi(e_{i}), f_{i}).
$$ 
and hence $tr(\phi) = \lambda \circ \mu ( \sum_{i} \phi( e_{i} ), f_{i}) = \lambda(\Phi(\phi)(1))$ as desired. 
\end{proof}

\begin{lem} \label{lem:factortrace}
The following diagram commutes:
$$
\xymatrix{
\End_{k}(A^{d}) \ar[d]^{\cong} \ar@/^2pc/[rrrrrd]_{tr} \\
\Mat_{d}(\End_{k}(A)) \ar[d]^{\Mat_{d}(\Phi)}
\ar[rrrr]^{\Mat_{d}(tr)} & & & & \Mat_{d}(k) \ar[r]^-{tr} & k \\
\Mat_{d}(\End_{A}(A)) \ar[rr]_-{\cong} & & \Mat_{d}(A) \ar[rru]^{\Mat_{d}(\lambda)} \ar[r]_-{tr_{A}} & A \ar[rru]_{\lambda}
}
$$
\end{lem}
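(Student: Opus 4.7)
The plan is to reduce the commutativity of the diagram to an entrywise statement, at which point it follows directly from Lemma \ref{Def Phi} together with the trivial compatibility between the matrix trace and a $k$-linear map.

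First I would fix the canonical isomorphism $\End_{k}(A^{d}) \cong \Mat_{d}(\End_{k}(A))$ coming from the direct-sum decomposition $A^{d} = \bigoplus_{i=1}^{d} A$, so that a $k$-linear endomorphism is represented by a $d \times d$ matrix $[\phi_{ij}]$ with $\phi_{ij} \in \End_{k}(A)$. Under this identification the ordinary trace $tr \colon \End_{k}(A^{d}) \to k$ factors as
$$
tr = tr_{\Mat_{d}(k)} \circ \Mat_{d}(tr_{\End_{k}(A)}),
$$
i.e. the trace of a block matrix is the sum of the traces of the diagonal blocks. This handles the outer triangle of the diagram.

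Next I would apply Lemma \ref{Def Phi} in each entry. That lemma says $tr_{\End_{k}(A)}(\phi) = \lambda(\Phi(\phi)(1))$; identifying $\End_{A}(A) \cong A$ by evaluation at $1$, this reads $tr_{\End_{k}(A)} = \lambda \circ \Phi$ as maps $\End_{k}(A) \to k$. Applying $\Mat_{d}$ to both sides yields
$$
\Mat_{d}(tr_{\End_{k}(A)}) = \Mat_{d}(\lambda) \circ \Mat_{d}(\Phi),
$$
which is precisely the commutativity of the left trapezoid.

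Finally, for the right trapezoid, I would use that $\lambda \colon A \to k$ is $k$-linear and that both $tr_{A} \colon \Mat_{d}(A) \to A$ and $tr_{\Mat_{d}(k)} \colon \Mat_{d}(k) \to k$ are simply sums of diagonal entries. Hence
$$
\lambda \circ tr_{A} = tr_{\Mat_{d}(k)} \circ \Mat_{d}(\lambda)
$$
as maps $\Mat_{d}(A) \to k$. Composing the three commutative pieces gives the full diagram, and along the way one recovers the formula $tr(\varphi) = \lambda \circ tr_{A}(\Mat_{d}(\Phi)(\varphi))$ for any $\varphi \in \End_{k}(A^{d})$. There is no real obstacle here; the only thing to be careful about is keeping the identifications $\End_{A}(A) \cong A$ and $\End_{k}(A^{d}) \cong \Mat_{d}(\End_{k}(A))$ straight so that the ``matrix trace'' and ``Frobenius form'' are applied on the correct sides.
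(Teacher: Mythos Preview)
Your proposal is correct and follows essentially the same approach as the paper: both decompose the diagram into the upper triangle (block-matrix trace equals sum of traces of diagonal blocks), the left trapezoid (apply $\Mat_{d}(-)$ to Lemma~\ref{Def Phi}), and the right trapezoid/quadrilateral ($\lambda \circ tr_{A} = tr \circ \Mat_{d}(\lambda)$ by $k$-linearity of $\lambda$). The only difference is cosmetic ordering and that you spell out the identifications a bit more explicitly.
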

 
\begin{proof}
Applying $\Mat_{d}(-)$ to the commutative diagram in Lemma \ref{Def Phi} gives commutativity of the lower left trapezoid. The commutativity of the upper triangle is the fact that the trace of a block matrix is the sum of the traces of the diagonal blocks.  
The final quadrilateral commutes since if $M = (a_{i,j}) \in Mat_{d}(A)$ then \\
\hspace*{2cm}
$\displaystyle
\lambda \circ tr_{A}(M) = \sum_{i} \lambda(a_{i,i}) = tr((\lambda(a_{i,j}))) 
= tr \circ \Mat_{d}(\lambda)(M). 
$
\end{proof} 

\begin{lem} \label{lem:restrictiondual}
The following diagram commutes:
$$
\xymatrix{
\End_{k}(A^{d}) \ar[d]^{\Mat_{d}(\Phi)} \ar[r]^{\cong} & \End_{k}(A^{d})^{*} \ar[d]^{rest.} \\
\End_{A}(A^{d}) \ar[r]^{\cong} & \End_{A}(A^{d})^{*} 
}
$$
where the horizontal identifications are given by the non-degenerate pairings,
$$
\End_{k}(A^{d}) \otimes \End_{k}(A^{d}) \rightarrow k \hspace{1cm} (\phi_{1} \otimes \phi_{2}) \mapsto tr(\phi_{1} \circ \phi_{2})
$$
$$
\hspace{.85cm} \End_{A}(A^{d}) \otimes \End_{A}(A^{d}) \rightarrow k \hspace{1cm} (\psi_{1} \otimes \psi_{2}) \mapsto \lambda \circ tr_{A}(\psi_{1} \circ \psi_{2}).
$$
\end{lem}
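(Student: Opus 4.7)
Unwinding the diagram, the claim is that for every $\phi \in \End_{k}(A^{d})$ and every $\psi \in \End_{A}(A^{d})$ we have
$$
tr(\phi \circ \psi) \;=\; \lambda \circ tr_{A}\!\bigl(\Mat_{d}(\Phi)(\phi) \circ \psi\bigr),
$$
so that the linear functional on $\End_{A}(A^{d})$ obtained by first going right (along the top pairing and then restricting) equals the one obtained by first going down (applying $\Mat_{d}(\Phi)$ and then using the bottom pairing). My plan is to reduce this to a single identity in $A \otimes_{k} A$ expressing centrality of the Casimir of the symmetric Frobenius algebra $A$.

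First, apply Lemma \ref{lem:factortrace} to the element $\phi \circ \psi \in \End_{k}(A^{d})$: this rewrites the left-hand side as $\lambda \circ tr_{A} \circ \Mat_{d}(\Phi)(\phi \circ \psi)$. Consequently it suffices to prove
$$
\Mat_{d}(\Phi)(\phi \circ \psi) \;=\; \Mat_{d}(\Phi)(\phi) \circ \psi \qquad \text{for } \psi \in \End_{A}(A^{d}).
$$
Since $\Mat_{d}(\Phi)$ is applied entrywise and matrix composition is given by the usual sum of products of entries, an elementary matrix-entry comparison reduces this identity to the scalar case $d=1$. Explicitly, writing $\phi = (\phi_{ij})$ and $\psi = (\psi_{ij})$, the $(i,j)$-entry of each side becomes $\sum_{k} \Phi(\phi_{ik} \circ \psi_{kj})$ versus $\sum_{k} \Phi(\phi_{ik}) \circ \psi_{kj}$. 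Thus the whole claim reduces to showing that $\Phi : \End_{k}(A) \to \End_{A}(A)$ is right $\End_{A}(A)$-linear:
$$
\Phi(\phi' \circ \psi') \;=\; \Phi(\phi') \circ \psi' \qquad \text{for } \phi' \in \End_{k}(A),\ \psi' \in \End_{A}(A).
$$

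For this last identity, use that a right $A$-linear endomorphism of $A$ is left multiplication by $b := \psi'(1)$. Evaluating at an arbitrary $a \in A$ using the definition $\Phi(\varphi)(a) = \sum_{i} \varphi(e_{i}) f_{i} a$ gives
$$
\Phi(\phi' \circ \psi')(a) = \sum_{i} \phi'(be_{i}) f_{i} a, \qquad \bigl(\Phi(\phi') \circ \psi'\bigr)(a) = \sum_{i} \phi'(e_{i}) f_{i} b a,
$$
so equality for all $a$ is equivalent to the identity
$$
\sum_{i} \phi'(be_{i}) \otimes f_{i} \;=\; \sum_{i} \phi'(e_{i}) \otimes f_{i} b \quad \text{in } A \otimes_{k} A,
$$
which follows by applying $\phi' \otimes \mathrm{id}$ (a $k$-linear map) to the identity $\sum_{i} b e_{i} \otimes f_{i} = \sum_{i} e_{i} \otimes f_{i} b$ and then multiplying.

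The main (and only non-formal) obstacle is this last Casimir-centrality identity; this is precisely where the symmetry of the Frobenius form is used. I would verify it by pairing both sides with $\lambda \otimes \mathrm{id}$ and with $\mathrm{id} \otimes \lambda$: the former gives $ba$ on each side using the standard dual-basis identity $\sum_{i} \lambda(y e_{i}) f_{i} = y$ and symmetry $\lambda(be_{i}) = \lambda(e_{i}b)$, and the latter does so analogously. Non-degeneracy of $\lambda \otimes \lambda$ on $A \otimes_{k} A$ (equivalently, of either of these partial pairings) then forces the two elements to coincide. Combining this scalar identity with the reductions above completes the proof of commutativity of the diagram.
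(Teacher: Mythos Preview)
Your argument follows the paper's proof almost verbatim: unwind the square to the identity $tr(\phi\circ\psi)=\lambda\circ tr_A(\Mat_d(\Phi)(\phi)\circ\psi)$, invoke Lemma~\ref{lem:factortrace} to reduce to $\Mat_d(\Phi)(\phi\circ\psi)=\Mat_d(\Phi)(\phi)\circ\psi$, pass to $d=1$ entrywise, and finish by right $\End_A(A)$-linearity of $\Phi$. The paper records this last step as ``$\Phi$ is an $A$-bimodule map and $\psi$ can be viewed as an element of $A$,'' which, unpacked, is exactly your Casimir-centrality identity $\sum_i be_i\otimes f_i=\sum_i e_i\otimes f_ib$.

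There is one small gap in your final verification. The two partial evaluations $\lambda\otimes\mathrm{id}$ and $\mathrm{id}\otimes\lambda$ do \emph{not} jointly separate points of $A\otimes_k A$: their common kernel is $\ker(\lambda)\otimes\ker(\lambda)$, which is nonzero whenever $\dim_k A>1$. So agreeing under both of these maps does not force equality in $A\otimes_k A$. A clean fix is to expand each side in the basis $\{e_j\otimes f_i\}$: writing $be_i=\sum_j\lambda(be_if_j)e_j$ and $f_jb=\sum_i\lambda(e_if_jb)f_i$, the coefficient comparison becomes $\lambda(be_if_j)=\lambda(e_if_jb)$, which is precisely symmetry of $\lambda$. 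With this correction your proof is complete and matches the paper's.
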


\begin{proof}
Commutativity amounts to the equivalence
$$
tr (\phi \circ \psi) = \lambda \circ tr_{A} ( \Mat_{d}(\Phi)(\varphi) \circ \psi) 
$$
for all $\phi \in \End_{k}(A^{d})$ and $\psi \in \End_{A}(A^{d})$. By Lemma \ref{lem:factortrace}, one can factor trace as,
$$
tr =  \lambda \circ tr_{A} \circ \Mat_{d}(\Phi)
$$
and hence it suffices to show $\Mat_{d}(\Phi)(\phi \circ \psi) =  \Mat_{d}(\Phi)(\phi) \circ \psi$. By comparing components of the mappings, one reduces to the case $d=1$, where the result follows from the fact that $\Phi$ is an $A$-bimodule map and $\psi$ can be viewed as an element of $A$. 
\end{proof}

Note that this completes the proof of Proposition \ref{prop:momentmapcase}, since 
$$
\Mat_{d}(\Phi) \rho( 1 \otimes_{k} 1 ) 
=\rho ( \Phi( 1 \otimes_{k} 1 ) ) 
=\rho \left(\sum e_{i} \otimes_{k} f_{i} \right).
$$
  
\begin{prop} \label{prop:momentmapcase2}
With the identifications induced from the non-degenerate pairings,
\begin{align*}
&\Hom_{A}(A^{d_{1}}, A^{d_{2}}) \otimes
\Hom_{A}(A^{d_{2}}, A^{d_{1}}) \rightarrow k 
\hspace{.6cm} \End_{A}(A^{d_{j}}) \otimes \End_{A}(A^{d_{j}}) \rightarrow k  \\  
(&\rho, \rho') \mapsto \sum_{\alpha \in Q_{1}} \lambda_{A} \circ tr_{A}( \rho(1_{M^{\alpha}}) \circ \rho'(1_{M^{\alpha^{*}}}) )  \hspace{.5cm}
 (X, Y) \mapsto \sum_{i \in Q_{0}} \lambda_{A} \circ tr_{A}(XY)
\end{align*}
for $j \in \{ 1, 2 \}$, the following diagram commutes:
$$
\begin{xy}
(8, 15)*+{\Hom_{A}(A^{d_{1}}, A^{d_{2}}) \oplus \Hom_{A}(A^{d_{1}}, A^{d_{2}})^{*}} = "A" ; 
(68, 15)*+{\End_{A}(A^{d_{1}})^{*}\oplus \End_{A}(A^{d_{2}})^{*}} = "B" ;
(0,0)*+{\Hom_{A}(A^{d_{1}}, A^{d_{2}}) \oplus \Hom_{A}(A^{d_{2}}, A^{d_{1}})} = "C" ;
(76, 0)*+{\End_{A}(A^{d_{1}}) \oplus \End_{A}(A^{d_{2}})} = "D" ;
{\ar@{->}^{\hspace{.75cm} \mu_{d}} "A"; "B"}  ;
{\ar@{->}^{\cong} "C"; "A"}  ;
{\ar@{->}^{\cong} "D"; "B"}  ;
{\ar@{->}^{\hspace{.75cm}(\rho(_{A}1_{A}), \rho(_{A}1_{A})) \mapsto \rho(r)} "C"; "D"}  ;
\end{xy}
$$
where $r = e_{1}r + e_{2} r = 1 \otimes^{2}_{A} 1 - 1 \otimes^{1}_{A} 1 = [ _{A} 1_{A}, _{A} 1_{A} ]$ where the labels $1$ and $2$ are used to distinguish between the two copies of $A$, one at each vertex.
\end{prop}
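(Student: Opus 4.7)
The plan is to compute the moment map $\mu_d$ directly from the formula for the cotangent lift of a linear action on a vector space, and then verify that under the stated identifications it agrees with the evaluation map $(f, g) \mapsto \rho(r)$. The structure is parallel to Proposition \ref{prop:momentmapcase}, but considerably simpler: because all data is already $A$-linear, no analogue of the map $\Phi$ from Lemma \ref{Def Phi} is needed, and the essential ingredient is replaced by cyclicity of the Frobenius trace.

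Parameterize elements of $\Rep_d(\overline{Q}, \overline{D_Q})$ as pairs $(f, g)$ with $f := \rho(1_{M^{\alpha}}) \in \Hom_A(A^{d_1}, A^{d_2})$ and $g := \rho(1_{M^{\alpha^*}}) \in \Hom_A(A^{d_2}, A^{d_1})$, using the given pairing $(f, g) \mapsto \lambda_A \circ tr_A(f \circ g)$ to identify the second factor with $\Hom_A(A^{d_1}, A^{d_2})^*$. The group $G_d = GL_A(A^{d_1}) \times GL_A(A^{d_2})$ acts on $V := \Hom_A(A^{d_1}, A^{d_2})$ by $(g_1, g_2) \cdot f = g_2 f g_1^{-1}$, so the infinitesimal action of $X = (X_1, X_2) \in \mathfrak{g}_d$ is $X \cdot f = X_2 f - f X_1$. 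The moment map for the cotangent lift of this linear action is the pairing $\mu_d(f, g)(X) = \langle g, X \cdot f\rangle$, which by the formula above becomes
\[
\mu_d(f, g)(X_1, X_2) = \lambda_A \circ tr_A(g X_2 f) - \lambda_A \circ tr_A(g f X_1).
\]

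Now invoke cyclicity: because each $A_i$ is \emph{symmetric} Frobenius, $\lambda_A$ vanishes on $[A, A]$, so $\lambda_A \circ tr_A(YZ) = \lambda_A \circ tr_A(ZY)$ for any composable pair of $A$-linear maps between free modules. Applying this to both summands yields $\mu_d(f, g)(X_1, X_2) = \lambda_A \circ tr_A(X_2 \cdot fg) - \lambda_A \circ tr_A(X_1 \cdot gf)$, so under the self-duality $\End_A(A^{d_j}) \cong \End_A(A^{d_j})^*$ defined by $(Y, Z) \mapsto \lambda_A \circ tr_A(YZ)$, the element $\mu_d(f, g)$ corresponds to $(-gf, fg)$ in $\End_A(A^{d_1}) \oplus \End_A(A^{d_2})$. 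Finally, match this against $\rho(r)$: by the left-to-right convention for path multiplication on right modules, $\rho(1 \otimes^1_A 1) = \rho_{\alpha^*} \circ \rho_\alpha = g \circ f$ at vertex $1$ and $\rho(1 \otimes^2_A 1) = \rho_\alpha \circ \rho_{\alpha^*} = f \circ g$ at vertex $2$, so $\rho(r) = \rho(1 \otimes^2_A 1 - 1 \otimes^1_A 1) = (-gf, fg)$ componentwise, as required.

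The only substantive step is the cyclicity of $\lambda_A \circ tr_A$, which is exactly where the symmetric Frobenius hypothesis plays a role; everything else is bookkeeping with compositions and conventions. The main obstacle I anticipate is keeping the signs and module orientations straight—in particular, confirming that the sign of $r$ in the statement is consistent with the left-to-right module convention used to interpret $\rho$ on length-two paths.
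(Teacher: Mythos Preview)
Your proposal is correct and follows essentially the same route as the paper: both compute the cotangent moment map via $\mu(f,\varphi)(X) = \varphi(X \cdot f)$ with the infinitesimal action $X \cdot f = X_2 f - f X_1$, and then read off the commutator $(-gf, fg)$ under the trace pairing. You are in fact more explicit than the paper about the key step: the paper silently passes from $\lambda_A \circ tr_A(g X_2 f)$ to $\lambda_A \circ tr_A(f g X_2)$, whereas you correctly isolate this as cyclicity of $\lambda_A \circ tr_A$, which holds precisely because $\lambda_A$ is \emph{symmetric} (the paper relegates this observation to Remark~\ref{rem:symmetry} rather than the proof body).
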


The difficulty in the previous proposition was relating, for a given $\alpha: i \rightarrow j$, the expressions $\lambda_{A_{i}} \circ tr_{A_{i}}$ and $\lambda_{A_{j}} \circ tr_{A_{j}}$. In this case, $A_{i} = A = A_{j}$ and hence that issue is not present. 

\begin{proof}
The result follows from showing that the moment map still has the same form as before. Whenever $G$ acts on a vector space $V$, the moment map for the action of $G$ on $T^{*}(V) = V \oplus V^{*}$ is given by 
$$
\mu: T^{*}(V) \rightarrow \fg^{*} \hspace{1cm}
\mu((v, \varphi))(X) = \varphi( X \cdot v). 
$$
In this case $\fg = \fg_{1} \oplus \fg_{2}$ is a sum and the action $X = (X_{1}, X_{2})$ on $v$ defined by differentiating the conjugation action is 
$$
X \cdot v = X_{2} \cdot v - v \cdot X_{1}.
$$
Therefore, 
$$
\begin{xy}
(10, 15)*+{(f, \lambda_{A} \circ tr_{A}(g \circ -))} = "A" ; 
(80, 15)*+{\lambda_{A} \circ tr_{A}( -g \circ f \circ -) +\lambda_{A} \circ tr_{A}( f \circ g \circ -)  } = "B" ;
(0,0)*+{(f, g)} = "C" ;
(90, 0)*+{(-g \circ f + f \circ g)} = "D" ;
{\ar@{|->} "A"; "B"}  ;
{\ar@{|->} "C"; "A"}  ;
{\ar@{|->} "D"; "B"}  ;
\end{xy}
$$
and hence the composition is giving by commutator and plugging in $f = \rho(_{A}1_{A})$ and $g= \rho(_{A}1_{A})$ gives the desired result.  
\end{proof}

This completes the proof of Theorem \ref{thm moment map}. 

\begin{rem} \label{rem:weakerassumptions}
A more general definition of preprojective algebra appears in \cite{Julian}, which puts a weaker restriction on the decorations than condition (F), although the Frobenius condition still arises naturally as we explain below. 
When can the representations of such algebras be identified with the zero fiber of a moment map, as in Theorem \ref{thm moment map}?

In more detail, in \cite{Julian}, decorations are required instead to have the property that each $M^{\alpha}$ is projective as a left $A_{t(\alpha)}$-module and a projective as a right $A_{s(\alpha)}$-module. In this setting, $M^{\alpha}$ has both left and right duals, and it is further required that these duals agree,
$$
\Hom_{A_{s(\alpha)}^{\text{op}}}(M^{\alpha}, A_{s(\alpha)})
\cong 
\Hom_{A_{t(\alpha)}}(M^{\alpha}, A_{t(\alpha)})
$$ 
as $(A_{t(\alpha)}, A_{s(\alpha)})$-bimodules. One says $M^{\alpha}$ is \emph{dualizable} in this case, and one says $D_{Q}$ is \emph{dualizable} if each $M^{\alpha}$ is dualizable. 

Notice that if a quiver has an arrow labelled with $M :={}_{A}A_{k}$ then dualizability at that arrow says,
$$
A \cong \Hom_{A}(M, A) \cong \Hom_{k}(M, k) \cong \Hom_{k}(A, k) 
$$
as an $(k, A)$-bimodule, and hence $A$ is Frobenius. More generally, if $M :={}_{A} A_{R}$ where $R$ is a $k$-algebra and $A$ is an algebra over $R$ then dualizability says
$$
A \cong \Hom_{R}(A, R)
$$
and hence $A$ is Frobenius relative to $R$, the assumption used in \cite{Presotto}. 
\end{rem}

\begin{rem} \label{rem:symmetry}
In the case $M ={}_{A} A_{A}$ dualizability is vacuous, yet we still need $A$ to be Frobenius in order to have a non-degenerate pairing 
$$
\Hom_{A}(A^{d_{1}}, A^{d_{2}}) \otimes \Hom_{A}(A^{d_{2}}, A^{d_{1}}) \overset{\circ}{\longrightarrow} \End_{A}(A^{d_{1}}) \overset{tr_{A}}{\longrightarrow} A \overset{\lambda}{\longrightarrow} k.
$$
Further, for the pairing to be $\text{GL}_{A}(A^{d_{1}})$-invariant then $\lambda$ needs to be symmetric. 
This justifies the assumption that the algebra at each vertex be symmetric Frobenius.
\end{rem}
   
 \begin{rem} \label{why free}
If $\alpha: i \rightarrow j$ is decorated with $M^{\alpha}$ an $(A_{i}, A_{j})$-bimodule, then
 $$
 \RHom_{A_{i} \otimes_{k} A_{j}^{op}}(M^{\alpha}, \Hom_{k}(V_{i}, V_{j})) \cong \Hom_{A_{i} \otimes_{k} A_{j}^{op}}(M^{\alpha}, \Hom_{k}(V_{i}, V_{j}))
 $$ 
 if $M^{\alpha}$ is a projective $(A_{i}, A_{j})$-bimodule or $\Hom_{k}(V_{i}, V_{j})$ is an injective $(A_{i}, A_{j})$-bimodule. Hence the restriction to free $A_{i}$-modules, $V_{i}$, implies that the underived representation spaces are actually derived, hence well-behaved and natural to study. One could alternatively consider $M^{\alpha}$ perfect and remove restrictions on the representations. Since our main objective is to study preprojective algebras, we prefer to allow for general $M^{\alpha}$.  
 \end{rem}
 
 


\section{Decorated Preprojective Algebras as Degenerations}
Many of the examples of decorated quivers in this paper have an interpretation as degenerations of ordinary quivers. In this section we explain this interpretation by first defining a notion of Frobenius deformation and proving that every finite-dimensional Frobenius algebra degenerates to a Frobenius algebra determined by a vector space with a non-degenerate bilinear form. In the commutative case, over an algebraically closed field of characteristic not two, there is a unique such degenerate algebra. Then we define a notion of Frobenius deformation for decorated quivers and use this to realize decorated preprojective algebras as degenerations. Finally, we conjecture that these degenerations are flat, and use upper semi-continuity of dimension under deformations to reduce the conjecture to a single verification for each quiver. 

\subsection{Frobenius Degenerations}
We want to degenerate (or deform) a decoration by degenerating both the $k$-algebras labelling each vertex and the bimodules labelling each edge. In the presence of condition (F), the degeneration of the decoration is determined by the degeneration of the algebras at the vertices. Moreover, to ensure condition (F) is satisfied when degenerating a decoration satisfying condition (F) we need to ensure each symmetric Frobenius algebra degenerates to a symmetric Frobenius algebra. 

We first recall the notions of formal and filtered deformations of associative algebras and bimodules. 

\begin{defn}
Let $A$ and $B$ be unital associative algebras over a field $k$. One says $B$ is a \emph{formal deformation} of $A$ and $A$ is a \emph{formal degeneration} of $B$ if there is a $k[[t]]$-algebra $D$ such that 
\begin{itemize}
\item $D/Dt \cong A$ as $k$-algebras and 
\item $D[t^{-1}] \cong B((t))$ as $k((t))$-algebras. 
\end{itemize}
Moreover, one says a formal deformation is \emph{flat} if
$D \cong A[[t]]$ as left $k[[t]]$-modules.
\end{defn}

\begin{defn}
Let $A$ and $B$ be unital associative $k$-algebras. One says $B$ is a \emph{filtered deformation} of $A$ and $A$ is a \emph{filtered degeneration} of $B$ if there exists:
\begin{itemize}
\item a filtration $B_{0} \subset B_{\leq 1} \subset B_{\leq 2} \subset \cdots \subset B_{\leq n} = B$,
\item a grading $A \cong \bigoplus_{i=0}^{n} A_{i}$, and
\item an isomorphism of graded algebras,
$$
gr(B) := \bigoplus_{i=0}^{n} B_{\leq i}/B_{\leq (i-1)} \cong \bigoplus_{i=0}^{n} A_{n} \cong A.
$$
\end{itemize}
\end{defn}

Any filtered deformation $B$ with $gr(B) \cong A$ gives rise to a formal deformation,
$$
\widehat{R}B :=
\left \{  \sum_{i \geq 0} b_{i} t^{i} : b_{i} \in B_{\leq i} \right \},
$$
called the Rees algebra. We present theory in the more general setting of formal deformations, but strengthen results by presenting proofs in the more specific setting of filtered deformations.\\
\\ 
Since our objective is to degenerate decorated quivers, we want to consider degenerations preserving condition (F). Notice that a (symmetric) Frobenius algebra can degenerate to a non-Frobenius algebra, e.g $k[[t]][x,y]/(x^{2}-t, y^{2} -t, xy)$ degenerates $k^{\oplus 3}$ to $k[x,y]/(x^{2}, y^{2}, xy)$, which is not Frobenius as it is not self-injective since $xA \rightarrow A$ sending $x \mapsto y$ does not extend to all of $A$.\\
\\
Hence, at each vertex we need to degenerate the symmetric Frobenius algebra to another symmetric Frobenius algebra. 

\begin{defn}
Let $D$ be a formal deformation of associative $k$-algebras from $A$ to $B$.
We say the deformation is \emph{Frobenius} if $D$ is Frobenius as a $k[[t]]$-algebra and $A$ and $B$ are Frobenius as $k$-algebras. Further we say the deformation is \emph{symmetric Frobenius} if $D$, $A$, and $B$ are all symmetric Frobenius. 
\end{defn}

\begin{rem} \label{rem:controllingdeformations}
This definition appears in \cite{Presotto} and the notion was studied in \cite{Connes}, where it is shown that the cohomology of the dg-Lie algebra controlling Frobenius deformations of a fixed Frobenius algebra $(A, \lambda)$ is the cyclic cohomology of $A$, see \cite{Connes}.
\end{rem}

Notice that we do \emph{not} require that the isomorphisms $A \cong D/tD$ or $B((t)) \cong D[t^{-1}]$ preserve the Frobenius forms. This added flexibility is convenient and innocuous for our purposes since different Frobenius forms on the same decoration yield isomorphic preprojective algebras, as explained in Remark \ref{rem:well-defined}. However, one should be mindful about some non-intuitive consequences of not fixing Frobenius forms, as explained in Example \ref{exam cliff} (4).

\begin{rem} \label{rem:FormOnDeformations}
A Frobenius form $\lambda: D \rightarrow k[[t]]$ on $D$ gives rise to Frobenius forms $\lambda_{0}$ on the $k$-algebra $A \cong D/tD$ and $\lambda_{1}$ on the $k((t))$-algebra $B((t)) \cong D[t^{-1}]$ defined so that the following diagram commutes:
$$
\xymatrix{
&  D/tD \ar[r]^{\lambda_{0}} & k \\
D \ar[r]^{\lambda} \ar[ru]^{\text{mod } t} 
\ar@{^{(}->}[rd] & k[[t]] \ar[ru]_{\text{mod } t} \ar@{^{(}->}[rd] \\
& D[t^{-1}] \ar[r]_{\lambda_{1}} & k((t)).
}
$$
\end{rem}

\begin{rem}
If $B$ is a filtered deformation of a finite-dimensional algebra $A = \oplus_{i=0}^{n} A_{i}$, then a Frobenius form on $A$ induces a Frobenius form on $B$. To see this, observe $\lambda: A \rightarrow k$ non-degenerate implies $\ker(\lambda) \not \subset A_{n} \cong B/B_{\leq (n-1)}$. Hence the lift $\tilde{\lambda}: B \rightarrow k$ to $B$ by defining $\tilde{\lambda}(B_{\leq (n-1)}) = 0$ is non-degenerate. 
\end{rem}

\begin{exam} \label{exam defs}
\begin{itemize}
\item[]
\item[(1)] Assume $k$ contains $n$th roots of unity, write $\zeta$ for a primitive $n$th root of unity, and define the surjective algebra homomorphism
$$
k[x] \rightarrow k^{\oplus n} \hspace{1cm} p(x) \mapsto ( p(\zeta), p(\zeta^{2}), \dots, p( \zeta^{n} ) )
$$
with kernel $(x^{n}-1)$. Using this map one can view $k^{\oplus n} \cong k[x]/(x^{n}-1)$ as a filtered deformation of $k[x]/(x^{n})$. The Frobenius form $\lambda: k[x]/(x^{n}) \rightarrow k$ given by $\lambda(\sum_{j} a_{j} x^{j}) = a_{n-1}$ lifts to $\tilde{\lambda}(\sum_{j} a_{j} x^{j}) = a_{n-1}$. Precomposing with $k^{\oplus n} \cong k[x]/(x^{n}-1)$ gives the Frobenius form 
$$
k^{\oplus n} \rightarrow k \hspace{1cm}
(a_{1}, a_{2}, \dots, a_{n}) \mapsto \frac{1}{n} \sum_{j=1}^{n} \zeta^{j} a_{j}
$$ 

\item[(2)] Fix a field $k$ of characteristic not two, a $k$-vector space $V$, and a quadratic form $Q: V \rightarrow k$. Then define the Clifford algebra by
$$
Cl_{k}(V, Q) := T_{k}(V)/\langle v \otimes_{k} v - Q(v)1 \rangle_{v \in V}.
$$ 
This is a filtered algebra with associated graded algebra given by the exterior algebra, $\bigwedge(V) = Cl_{k}(V, 0)$. The Frobenius form on the exterior algebra given by projecting onto the top wedge power remains non-degenerate when viewed as a form on $Cl(V, Q)$.
\item[(3)] $A \cong Mat_{2 \times 2}(k)$ is separable as an algebra over $k$ so
$$
HH^{2}(A, A) := Ext^{2}_{A \otimes A^{op}-mod}(A, A)=0.
$$
We conclude that $A$ has no deformations as a (symmetric Frobenius) algebra. In particular, $A$ does not deform to $k^{4}$ and more generally in all dimensions $n \geq 4$, $A \oplus k^{n-4}$ does not deform to $k^{n}$. 

\item[(4)] \label{exam cliff}
$\Mat_{2 \times 2}(k)$ can be viewed as a Clifford algebra, and hence has a degeneration to the exterior algebra. However, it does \emph{not} degenerate as a Frobenius algebra using the trace form, and in fact does not degenerate as a symmetric Frobenius algebra.

In more detail, if $k$ is a field of characteristic not two and containing $i := \sqrt{-1}$, then one realizes $\Mat_{2 \times 2}(k)$ as a Clifford algebra using the surjective algebra map
$$
\hspace{1.35cm} 
\varphi: k \langle x, y \rangle \rightarrow \Mat_{2}(k)
\hspace{.4cm}
\text{given by}   
\hspace{.4cm}
x \mapsto  
\left (
\begin{array}{cc}
0 & -1 \\
1 & 0 \\
\end{array}
\right )
\hspace{.5cm}
y \mapsto 
\left (
\begin{array}{cc}
i & 0 \\
0 & -i \\
\end{array}
\right )
$$  
with kernel $(x^{2}+1, y^{2}+1, xy +yx)$. Therefore, 
$$
\Mat_{2 \times 2}(k) \cong k \langle x, y \rangle /(x^2+1, y^2+1, xy+yx) \cong Cl(V, Q)
$$
where $V \cong k^{2}$ with basis $\{x, y\}$ and $Q: V \rightarrow k$ is determined by $Q(x) = Q(y) = Q(x+y)/2 = -1$. Hence $\Mat_{2 \times 2}(k)$ degenerates as an associative algebra to $\bigwedge(V)$.

Consider two Frobenius forms $\lambda_{1}, \lambda_{2}:\Mat_{2 \times 2}(k) \rightarrow k$ given by,
$$
\lambda_{1} \left ( \begin{array}{cc}
a & b \\
c & d \\
\end{array} \right ) = a + d
\hspace{1cm}
\lambda_{2} \left ( \begin{array}{cc}
a & b \\
c & d \\
\end{array} \right ) = b + c.
$$
Observe $\lambda_{1}$ is the usual trace and hence symmetric, but  $\lambda_{1}(x) = \lambda_{1}(xy) = 0$ implies it degenerates to a form on $\bigwedge(V)$ whose kernel contains the left ideal generated by $x$. Therefore, the degeneration as associative algebras from $\Mat_{2}(k)$ to $\bigwedge(V)$ cannot be made into a Frobenius degeneration when $\Mat_{2}(k)$ is given the Frobenius form $\lambda_{1}$. In fact, one can show that there is no associative algebra degeneration taking $\lambda_{1}$ to a non-degenerate form on $\bigwedge(V)$.

However, $\lambda_{2}$ degenerates to $\pi^{2}:\bigwedge(V) \rightarrow \bigwedge^{2}(V) \cong k$, the usual Frobenius form on $\bigwedge(V)$. We conclude that $(\Mat_{2}(k), \lambda_{2})$ Frobenius degenerates to $(\bigwedge(V), \pi^{2})$.  \\
 \end{itemize}
\end{exam}

\noindent In the remainder of this subsection we address the following questions:
\begin{quote}
Consider the directed graph $G_{n, k}$ for each $n \in \bN$ and field $k$ with a vertex for each isomorphism class of $n$-dimensional Frobenius $k$-algebra and an arrow from $A$ to $B$ if $A$ deforms to $B$ as a Frobenius $k$-algebra, with some choice of forms. 
What is the shape of $G_{n, k}$? More specifically, viewed as a partially ordered set, does it have a greatest and least element? If not, what are the minimal and maximal elements? 
\end{quote}    
 
\begin{exam}
The subgraph of commutative Frobenius algebras, denoted $G^{\text{com}}_{4,k} \subset G_{4, k}$, for $k$ algebraically closed appears in \cite{Presotto}. $G^{\text{com}}_{4, k}$ has least element $k[x, y]/(x^{2}-y^{2}, xy)$ and greatest element $k^{\oplus 4}$, and hence, by composing deformations, every four-dimensional commutative Frobenius $k$-algebra deforms to $k^{\oplus 4}$ and degenerates to $k[x, y]/(x^{2}-y^{2}, xy)$. See Example \ref{Graph in dim 4} for the entire $G_{4, k}$, with characteristic of $k$ not two.
\end{exam}

By Remark \ref{rem:controllingdeformations}, if an algebra has vanishing second cyclic cohomology, then it has no deformations as a Frobenius algebra, and hence is maximal. Any Frobenius deformation gives rise to a deformation of associative algebras by forgetting the Frobenius form and hence rigid algebras are maximal. In particular, this includes all semisimple algebras. 

A commutative $n$-dimensional algebra is called \emph{smoothable} if it deforms to $k^{\oplus n}$. The smallest non-smoothable commutative algebra is 8-dimensional, e.g.
$$
k[x, y, z, w]/(x^{2}, xy, y^{2}, z^{2}, zw, w^{2}, xw-yz),
$$
see \cite{Poonen}. The smallest non-smoothable commutative Frobenius algebra is 14-dimensional, see \cite{Notari}. Said differently, $k^{\oplus n}$ is the unique $n$-dimensional, rigid commutative algebra if $n < 8$ and the unique $n$-dimensional rigid commutative Frobenius algebra if $n < 14$. Classifying rigid algebras becomes intractable in higher dimensions. For a nice survey of the techniques used and difficulties encountered, see \cite{Picard}, where 8-dimensional rigid algebras are classified.  

One can generalize this notion to the non-commutative case by asking when an algebra deforms to a semisimple algebra. The smallest non-smoothable algebra is the 3-dimensional path algebra $P(A_{2})$. The smallest non-smoothable Frobenius algebras are 4-dimensional, e.g. $k \langle x, y \rangle /(x^{2},y^{2}, xy- 2yx)$, see Example \ref{Graph in dim 4}. 

We now classify minimal elements in $G_{n, k}$, i.e. most degenerate Frobenius algebras. Additionally, we characterize Frobenius algebras among associative algebras, as filtered deformations of this class of algebras.

One can view a finite-dimensional vector space $V$ with a bilinear form $(-, -): V \times V \rightarrow k$ as a multiplication $\mu$ on the graded vector space $V \oplus k$ with $V$ in degree 1 and $k$ in degree 2. The multiplication is trivially associative since $\mu \circ (\mu \otimes 1) = 0 = \mu \circ (1 \otimes \mu)$ and commutative if $(-,-)$ is symmetric. One can add a unit, a copy of the ground field in degree 0, to obtain a $\dim(V)+2$ dimensional algebra, 
$$
A(V, (-, -)) := k \oplus V \oplus k
$$ 
as a graded vector space with multiplication,
$$
(a, b, c) \cdot (a', b', c') := (a a', ab' + b a', (b, b') + a c' + c a').
$$
Moreover, if $(-,-)$ is non-degenerate, then projecting onto the copy of $k$ in the second graded piece is a Frobenius form. These algebras are the only candidates for the most degenerate Frobenius algebras, as made precise in the following proposition. 

\begin{prop} \label{prop:MostDegenerate}
Let $A$ be a Frobenius algebra of dimension $n>1$ over any field $k$. Then there exists a Frobenius form $\lambda$ on $A$ such that $A$ degenerates as a Frobenius algebra to $A(V, (-,-))$ for some non-degenerate, bilinear form $(-,-): V \otimes  V \rightarrow k$.
\end{prop}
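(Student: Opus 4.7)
My plan is to build a two-step algebra filtration on $A$ whose associated graded is $A(V, (-,-))$, and then pass to the Rees algebra to obtain the Frobenius deformation. Concretely, the filtration will be $F_0 = k\cdot 1 \subset F_1 = \ker \lambda \subset F_2 = A$ for a Frobenius form $\lambda$ on $A$ with $\lambda(1) = 0$, so the principal technical step is to produce such a $\lambda$.

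For this existence step I would use the criterion that $\lambda \in A^*$ is Frobenius iff $\ker \lambda$ contains no nonzero left ideal, equivalently iff $\lambda$ is nonzero on every simple submodule of $\soc(A)$. Split on whether $A$ is semisimple. If not, then $1 \notin \soc(A)$ (else $\text{rad}(A) \cdot 1 = \text{rad}(A) = 0$), so $k\cdot 1 \cap \soc(A) = 0$ and I would specify $\lambda$ independently on these two transverse subspaces (setting $\lambda(1)=0$ and choosing $\lambda|_{\soc(A)}$ to be nonzero on each simple submodule), then extend arbitrarily to $A$. If $A$ is semisimple, decompose $A = \prod B_i$ into simple factors. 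Whenever some $B_i$ has $\dim B_i \geq 2$ I would build $\lambda_i$ on $B_i$ with $\lambda_i(1_i) = 0$: for $B_i = M_n(k)$ with $n\geq 2$ take $X\mapsto \text{tr}(XY)$ with $Y\in GL_n(k)$ of trace zero, and for a division-ring extension $D/k$ of degree $\geq 2$ take $d\mapsto \mu(du)$ for any fixed Frobenius form $\mu$ on $D$ and any nonzero $u \in \ker \mu$. Otherwise $A \cong k^n$ with $n\geq 2$, and a choice $(c_i) \in (k^\times)^n$ with $\sum c_i = 0$ exists in every characteristic.

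With $\lambda$ at hand, $F_\bullet$ is automatically multiplicative since $F_j = A$ for $j \geq 2$. The associated graded pieces are $gr_0 = k$, $gr_1 = V := \ker\lambda / k\cdot 1$, $gr_2 = A / \ker \lambda \cong k$ (via $\lambda$), and the induced pairing $V \otimes V \to gr_2 \cong k$ is $(v,w) \mapsto \lambda(\bar v \bar w)$. This is well-defined precisely because $\lambda(1) = 0$, and non-degenerate because if $\lambda(\bar v \bar w) = 0$ for all $\bar w \in \ker\lambda$, combining with $\lambda(\bar v) = \lambda(\bar v\cdot 1) = 0$ yields $\lambda(\bar v \cdot a) = 0$ for every $a \in A$, forcing $\bar v = 0$ by non-degeneracy of $\lambda$. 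Hence $gr(A) \cong A(V, (-,-))$. The Rees algebra $\widehat R A := \sum_i F_i t^i \subseteq A[[t]]$ is then a flat $k[[t]]$-algebra with $\widehat R A / t \widehat R A \cong A(V, (-,-))$ and $\widehat R A [t^{-1}] \cong A((t))$, and $\widetilde \lambda := t^{-2} \lambda$ is a well-defined $k[[t]]$-linear Frobenius form on $\widehat R A$ (well-defined because $\lambda(F_0) = \lambda(F_1) = 0$) that reduces modulo $t$ to the Frobenius form on $A(V, (-,-))$. The main obstacle throughout is the existence of a Frobenius form with $\lambda(1) = 0$; everything afterwards is essentially filtered-to-Rees bookkeeping.
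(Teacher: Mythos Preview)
Your overall strategy coincides with the paper's: choose a Frobenius form $\lambda$ with $\lambda(1)=0$, filter by $k\cdot 1\subset\ker\lambda\subset A$, and identify the associated graded with $A(V,(-,-))$. Your filtered-to-Rees bookkeeping and the non-degeneracy check for the induced pairing are fine. The issue is in the existence step for $\lambda$.

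The claim that for $A\cong k^n$ with $n\ge 2$ one can always choose $(c_i)\in(k^\times)^n$ with $\sum c_i=0$ is false over $k=\bF_2$ when $n$ is odd: then $k^\times=\{1\}$ forces every $c_i=1$ and $\sum c_i=n\equiv 1$. In fact $\bF_2^{\,n}$ with $n$ odd has a \emph{unique} Frobenius form, and it sends $1$ to $1$, so no $\lambda$ with $\lambda(1)=0$ exists on $A$ itself. The paper confronts exactly this obstruction (see Lemma~\ref{lem:FormVanishesOnUnit} and Remark~\ref{rem:F2case}): it first Frobenius-degenerates $\bF_2^{\,n}$ to $\bF_2[x]/(x^2)\oplus\bF_2^{\,n-2}$, which \emph{does} admit a Frobenius form vanishing on the unit, and only then applies the $k\cdot 1\subset\ker\lambda\subset B$ filtration. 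Your argument needs this extra degeneration step (or some equivalent workaround) to cover ``any field $k$''.

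A smaller point: in the semisimple case your text only explicitly treats the two extremes ``some $B_i$ has $\dim\ge 2$'' and ``$A\cong k^n$'', but when both kinds of factors occur you cannot force $\lambda_i(1_i)=0$ on the one-dimensional factors. This is easily patched (adjust the form on a single $\ge 2$-dimensional factor to absorb the sum), but as written the case split is incomplete. The genuine obstruction, however, is the $\bF_2^{\,\text{odd}}$ case above.
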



How general is this class of algebras? If $V$ is finite-dimensional then choose a basis $\{ e_{i} \}$ for $V$ and write the bilinear form $(-, -)$ as a matrix $M$ with $(i,j)$th entry $m_{ij} := ( e_{i}, e_{j})$. Over a field of characteristic different from 2, if $(-,-)$ is symmetric then one can change bases so that $M$ is diagonal. Moreover, if all square roots of the diagonal entries are contained in the field, then one can rescale any element $a$ to $a/ \sqrt{(a, a)}$ so that the matrix is given by the identity. Hence $A(V, (-,-)) \cong A(k^{n}, (e_{i},e_{j})= \delta_{i,j} )$ as Frobenius algebras over an algebraically closed field of characteristic not two. 
To condense notation we write $Z_{n} := A(k^{n}, (e_{i},e_{j})= \delta_{i,j} )$ and note that, 
$$
Z_{n} := k[1] \oplus k[x_{1}, \dots, x_{n-2}] \oplus k[w] \hspace{.8cm} x_{i} \cdot x_{j} = \delta_{ij} w, \ \ x_{i} \cdot w = 0 \hspace{.8cm}
\lambda: Z_{n} \rightarrow k[w].
$$
for $n \geq 2$ and we define $Z_{1} := k$.
Therefore, in the commutative case, over an algebraically closed field of characteristic not two, there is a single most degenerate Frobenius algebra.

\begin{cor}  \label{cor:MostDegenerateCommutative}
Every $n$-dimensional commutative Frobenius algebra over an algebraically closed field $k$ with characteristic not two Frobenius degenerates, with some choice of form, to $Z_{n}$.
\end{cor}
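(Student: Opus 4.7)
The plan is to deduce this directly from Proposition \ref{prop:MostDegenerate} combined with the diagonalization discussion preceding the corollary. By Proposition \ref{prop:MostDegenerate}, with some choice of Frobenius form, $A$ Frobenius degenerates to an algebra of the form $A(V,(-,-))$ for some non-degenerate bilinear form on a vector space $V$ of dimension $n-2$. So I only need to show that in the commutative setting the form can be arranged to be the standard one $(e_i,e_j)=\delta_{ij}$.

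The first step is to observe that commutativity is preserved under degeneration. Indeed, in a Frobenius (or merely flat) degeneration $D$ from $A$ to $A(V,(-,-))$, the commutator $[x,y]$ is a $k[[t]]$-bilinear expression, so if it vanishes on the generic fibre $A((t))\cong D[t^{-1}]$ then it vanishes on $D$ and hence on the special fibre $D/tD\cong A(V,(-,-))$. Writing out the multiplication in $A(V,(-,-))$, commutativity says $(b,b')+ab'+ba'=(b',b)+a'b+b'a$ for all $(a,b,c),(a',b',c')$; taking $a=a'=0$ gives $(b,b')=(b',b)$. Therefore $(-,-)$ is a symmetric non-degenerate bilinear form on $V$.

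The second step is standard linear algebra: over a field of characteristic not two, every non-degenerate symmetric bilinear form on a finite-dimensional vector space $V$ is diagonalizable, and over an algebraically closed field every nonzero scalar is a square, so after rescaling a diagonal basis we obtain a basis $\{e_i\}$ with $(e_i,e_j)=\delta_{ij}$. This produces an isomorphism of Frobenius algebras $A(V,(-,-))\cong A(k^{n-2},\delta_{ij})=Z_n$, as discussed in the paragraph before the corollary. Composing this isomorphism with the degeneration from the first step yields the desired Frobenius degeneration $A\rightsquigarrow Z_n$.

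The only potentially subtle point is the first step, namely that the target of the degeneration from Proposition \ref{prop:MostDegenerate} is automatically commutative; but this follows at once from the fact that commutativity is a closed (in fact vanishing of a $k[[t]]$-linear map) condition in any flat family, so it passes from the generic fibre to the special fibre. Everything else is bookkeeping against the explicit description of $A(V,(-,-))$ and the normalization of symmetric forms over an algebraically closed field of characteristic $\ne 2$.
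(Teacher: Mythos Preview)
Your proof is correct and follows the same route the paper intends: apply Proposition~\ref{prop:MostDegenerate} to land in some $A(V,(-,-))$, observe that commutativity of $A$ forces the form to be symmetric, and then invoke the diagonalization of nondegenerate symmetric forms over an algebraically closed field of characteristic not two (exactly the paragraph preceding the corollary) to identify $A(V,(-,-))\cong Z_n$. The only difference is cosmetic: the paper's proof of Proposition~\ref{prop:MostDegenerate} actually produces a \emph{filtered} degeneration (associated graded with respect to $k1\subset\ker\lambda\subset A$), from which commutativity of the special fibre is immediate since $gr$ of a commutative filtered algebra is commutative; your formal-family argument reaches the same conclusion but relies on torsion-freeness of $D$ over $k[[t]]$, which is guaranteed precisely because the degeneration comes from a Rees construction.
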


The idea of the proof of the proposition is to write down an explicit 3-term filtration on the Frobenius algebra $A$ with form $\lambda$ by,
$$
k1 \subset \ker(\lambda) \subset A
$$
whose associated graded algebra is of the form $A(V, \lambda \circ \mu)$. Therefore, we first establish a technical lemma that says one can always find a Frobenius form such that $k1 \subset \ker(\lambda)$. 

\begin{lem} \label{lem:FormVanishesOnUnit}
Let $A$ be a Frobenius algebra with form $\lambda$ over $k \neq \bF_{2}$. Then either $A = k$ or there exists a unit $u \in A^{\times}$ such that $\lambda(u) =0$. Consequently, there exists a Frobenius form $\lambda'$ on $A$ such that $\lambda'(1) = 0$.
\end{lem}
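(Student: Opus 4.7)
If $\lambda(1) = 0$, take $u = 1$ and set $\lambda' = \lambda$. Otherwise, once we have exhibited a unit $u \in A^\times$ with $\lambda(u) = 0$, the form $\lambda'(a) := \lambda(a u)$ is automatically Frobenius (since right multiplication by a unit is a $k$-linear bijection of $A$ sending left ideals to left ideals), and $\lambda'(1) = \lambda(u) = 0$. So the whole task is to find such a $u$ when $\lambda(1) \neq 0$. The unifying strategy is to construct an affine $k$-line $L = 1 + k \cdot x \subset A^\times$ on which $\lambda|_L$ is a non-constant affine function; such a function $k \to k$ is surjective and hits $0$.

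\textbf{Case 1 (Jacobson radical nonzero).} If $J := J(A) \neq 0$, non-degeneracy of $\lambda$ prevents $\lambda|_J \equiv 0$ (else the nonzero two-sided ideal $J$ would lie in $\ker \lambda$). Pick $j \in J$ with $\lambda(j) \neq 0$; then $L = 1 + kj \subset 1 + J \subset A^\times$, and $\lambda(1 + tj) = \lambda(1) + t\lambda(j)$ is non-constant in $t$. This case uses no hypothesis on $k$.

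\textbf{Case 2 ($A$ semisimple).} Write $A = \prod_{i=1}^r A_i$ with $A_i = M_{n_i}(D_i)$ simple, set $\beta_i := \lambda(1_{A_i})$, so $\sum \beta_i = \lambda(1) \neq 0$, and let $\lambda_i := \lambda|_{A_i}$. The key subsidiary claim is that whenever $\dim_k A_i \geq 2$, the map $\lambda_i : A_i^\times \to k$ is surjective. For $A_i = D_i$ a division ring of $k$-dimension $\geq 2$, this is immediate: every nonzero element is a unit, and each fiber $\lambda_i^{-1}(c)$ is an affine $k$-subspace of dimension $\geq 1$, so contains nonzero points. For $A_i = M_n(D)$ with $n \geq 2$, one exhibits a nilpotent $x$ with $\lambda_i(x) \neq 0$ and then notes that $1 + k \cdot x \subset A_i^\times$; existence of such $x$ follows from non-degeneracy applied to the nonzero left ideal $A_i e_{12}$, every element of which satisfies a square-zero relation after a small adjustment. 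Granting the claim: if some factor has $\dim_k A_i \geq 2$, pick $u_i \in A_i^\times$ with $\lambda_i(u_i) = \beta_i - \lambda(1)$ and set $u := (1_{A_1}, \ldots, u_i, \ldots, 1_{A_r}) \in A^\times$; then
\[
\lambda(u) \;=\; \sum_{j \neq i} \beta_j + \lambda_i(u_i) \;=\; \bigl(\lambda(1) - \beta_i\bigr) + \bigl(\beta_i - \lambda(1)\bigr) \;=\; 0.
\]

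\textbf{Case 2b ($A \cong k^r$, $r \geq 2$).} This is the only place where $k \neq \bF_2$ is needed. Here $\lambda(a_1, \ldots, a_r) = \sum \beta_i a_i$ with all $\beta_i \in k^\times$ by non-degeneracy, and we seek $(c_1, \ldots, c_r) \in (k^\times)^r$ with $\sum c_i \beta_i = 0$. Fix $c_3 = \cdots = c_r = 1$, let $c_1 \in k^\times$ vary, and solve $c_2 = -(c_1 \beta_1 + \sum_{i \geq 3} \beta_i)/\beta_2$; the condition $c_2 \in k^\times$ excludes at most the single value $c_1 = -\beta_1^{-1}\sum_{i \geq 3}\beta_i$, so a valid $c_1$ exists as long as $|k^\times| \geq 2$, i.e., $k \neq \bF_2$. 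I expect the main obstacle to be the subsidiary claim in the matrix case $M_n(D)$ with $n \geq 2$: ruling out the possibility that $\lambda_i$ vanishes on every nilpotent of $A_i$. The cleanest route is probably to apply non-degeneracy to the left ideal $A_i e_{12}$ and observe that its elements are square-zero after multiplying on the right by any element killing the $(2,1)$-entry, producing a nilpotent with nonzero $\lambda_i$-value.
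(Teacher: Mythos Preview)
Your reduction to the semisimple case (Case~1) is essentially the paper's argument run forwards rather than by contradiction, and Case~2b is fine. The genuine gap is exactly where you flagged it: the matrix case $A_i = M_n(D)$ with $n \ge 2$. Your proposed route --- find a nilpotent $x$ with $\lambda_i(x)\neq 0$ --- cannot succeed in general. Take $D=k$ and $\lambda_i=\operatorname{tr}$ (or more generally $\lambda_i(M)=\mu(\sum_p M_{pp})$ for a nonzero $\mu:D\to k$ vanishing on $[D,D]$); this is a perfectly good Frobenius form, yet it kills every nilpotent matrix. Your suggested fix via $A_ie_{12}$ does not rescue this: elements of that left ideal are square-zero only when their $(2,2)$-entry vanishes, and the square-zero elements do \emph{not} form a left ideal, so non-degeneracy gives you nothing there. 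For the trace-type forms no nilpotent with nonzero $\lambda_i$-value exists at all.

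Your surjectivity claim $\lambda_i:A_i^\times\to k$ is nonetheless true; one salvage is to use companion-type units instead of unipotents. For instance, since any two Frobenius forms on $A_i$ differ by a unit, $\lambda_i(A_i^\times)$ is independent of the form, so you may compute it for $\lambda_0(M)=\mu(\sum_p M_{pp})$; then the invertible matrices $e_{21}-e_{12}+t\,d_0\,e_{22}+e_{33}+\cdots+e_{nn}$ with $\mu(d_0)\neq 0$ give an affine line in $k$ as $t$ varies. The paper sidesteps all of this: rather than proving surjectivity for the given $\lambda$, it observes that ``$\lambda$ vanishes on some unit'' is independent of which Frobenius form you use (they differ by units), then exhibits one convenient form --- built from the diagonal $k^{\sum n_i}\subset A$ --- that visibly vanishes on a diagonal unit with nonzero $k$-entries summing to zero. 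This is shorter and avoids the case split between division rings and matrix rings entirely.
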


\begin{proof}
If $\lambda$ vanishes on a unit $u \in A^{\times}$ then define $\lambda' := \lambda \circ L_{u}$ where $L_{u}$ is left multiplication by $u \in A$, so $\lambda'(1) = \lambda(u) =0$ as desired. \\
\\
Suppose $\lambda$ is non-zero on every unit. 
Notice that each element $x \in A$ with $x^{n}=0$ gives rise to a unit $1+x \in A^{\times}$ as $$
(1+x) \left ( \sum_{j=0}^{n-1} (-x)^j \right ) = (1 \pm x^{n}) = 1.
$$
Then $\lambda(1 \cdot \lambda(x) - \lambda(1) \cdot x ) = 0$ implies $1 \cdot \lambda(x) - \lambda(1) \cdot x$ is not a unit and hence $\lambda(x) =0$. We conclude that $\lambda$ vanishes on all nilpotent elements and hence $\lambda$ vanishes on the Jacobson radical, $J(A)$. Non-degeneracy of $\lambda$ implies $J(A) =0$. The vanishing of the Jacobson radical and the fact that $A$ is Artinian together imply that $A$ is semisimple. By the Artin-Wedderburn Theorem, $A \cong \prod_{i=1}^{m} \Mat_{n_{i}}(D_{i})$ is a product of matrix algebras over division algebras over $k$.

Therefore, $A$ has a subalgebra $k^{\sum_{i=1}^{m} n_{i}}$ of diagonal matrices with coefficients in $k$. Splitting the inclusion of this subalgebra and summing the components gives a Frobenius form 
$$
\lambda': A \cong  \prod_{i=1}^{m}  \Mat_{n_{i}}(D_{i}) \longrightarrow  k^{\sum n_{i}} \longrightarrow k,
$$
which by construction agrees with the usual trace for matrices in $\prod_{i} \Mat_{n_{i}}(k)$. In particular, $\lambda'$ vanishes on the units given by products of diagonal matrices with non-zero entries summing to zero. Such entries can be found since $k \neq \bF_{2}$. 

As any two Frobenius forms differ by multiplication by a unit, see Remark \ref{rem:well-defined}, we have $\lambda' = \lambda \circ L_{u}$ for some $u \in A^{\times}$. In particular, $\lambda$ vanishes on a unit if and only if $\lambda'$ does. So we conclude that $m=1$ and $n_{1} = 1$. 

Now $A \cong D_{1}$ and $\lambda: A \rightarrow k$ doesn't vanish on a unit only if $\lambda$ is injective. Hence $A = k$, completing the proof. 
\end{proof}

\begin{rem} \label{rem:F2case}
If $k=\bF_{2}$, the above proof holds for $m$ even, and for $m$ odd can be modified to show $A = \bF_{2}^{m}$, a product of copies of $\bF_{2}$ with pointwise multiplication.
Non-degeneracy of a form $\lambda: A \rightarrow \bF_{2}$ implies $\lambda(1, 0, \dots, 0) = \lambda(0,1,0, \dots, 0) = \cdots = \lambda(0,\dots, 0,1) = 1$, in which case $\lambda(1, 1, \dots, 1) = m \equiv 1$ (mod 2). Hence $A$ has a unique Frobenius form, which takes the unit to one. Therefore, the assumption that $k \neq \bF_{2}$ in Lemma \ref{lem:FormVanishesOnUnit} is necessary. However, if $m \geq 3$ is odd, then $\bF_{2}^{m} \cong \bF_{2}^{2} \oplus \bF_{2}^{m-2}$ Frobenius degenerates to $\bF_{2}[x]/(x^{2}) \oplus \bF_{2}^{m-2}$, which has Frobenius form $\lambda( b_{1} + b_{2}x, a_{1}, \dots, a_{m-2}) = b_{1} + b_{2} + a_{1} + \cdots + a_{m-2}$ sending the unit to zero. Therefore, every Frobenius algebra Frobenius degenerates to a one whose form vanishes on a unit.  
\end{rem}

\begin{proof}(of Prop \ref{prop:MostDegenerate})
If $k \neq \bF_{2}$, then by Lemma \ref{lem:FormVanishesOnUnit} $A$ has a Frobenius form which vanishes on the unit. If $k = \bF_{2}$ then $A$ either has such a form, or can be Frobenius degenerated to an algebra $B$ with form $\lambda$ satisfying $\lambda(1) = 0$, by Remark \ref{rem:F2case}. Then the filtration,
$$
k1 \subset \ker(\lambda) \subset B
$$
has associated graded algebra
$$
gr(B) = k1 \oplus \ker(\lambda)/k1 \oplus B/\ker(\lambda).
$$
Define $V := \ker(\lambda)/k1$ and observe $B/\ker(\lambda) \cong \text{im}(\lambda) \cong k$, so $gr(B) = k \oplus V \oplus k$ as a graded vector space. $\lambda$ is still a well-defined Frobenius form on $gr(B)$. And multiplication is defined on $gr(B)$ so that $(1, 0, 0)$ is the unit and $(0, v, 0) \cdot (0, w, 0) = \lambda(\mu(v, w))$. Hence $A$ degenerates as a Frobenius algebra to $A(V, \lambda \circ \mu)$.
\end{proof}

The upshot is that every $n$-dimensional Frobenius algebra has a filtered Frobenius degeneration to some $A(V, (-,-))$. In the commutative case, over an algebraically closed field of characteristic not two, $A(V, (-,-)) \cong Z_{n}$. Conversely, Frobenius algebras are characterized among associative algebras as being filtered deformations of some $A(V, (-,-))$, which necessarily preserves the Frobenius form by Remark \ref{rem:FormOnDeformations}.

\begin{rem} \label{rem:bilinearformdeformations}
Frobenius degenerations from $A(V, (-,-))$ to $A(V, (-,-)')$ by definition preserve the unit and top-graded piece and hence necessarily arise as degenerations of the underlying bilinear form. In terms of the corresponding matrices $M$ and $M'$ of the forms, one must have $M'$ in the closure of the congruence class of $M$, i.e.
$$
M' = \lim_{t \rightarrow 0} N_{t} M N_{t}^{\intercal},
$$
for some $N_{t} \in Mat_{|V|}(k((t)))$. For instance,
$$
\left (
\begin{array}{cc}
0 & 1 \\
-1 & 0 \\
\end{array}
\right )
=
\lim_{t \rightarrow 0}
\left (
\begin{array}{cc}
t^{2} & 1 \\
-1 & 0 \\
\end{array}
\right ) 
=
\lim_{t \rightarrow 0}
\left (
\begin{array}{cc}
t & 0 \\
0 & 1/t \\
\end{array}
\right )
\left (
\begin{array}{cc}
1 & 1 \\
-1 & 0 \\
\end{array}
\right )
\left (
\begin{array}{cc}
t & 0 \\
0 & 1/t \\
\end{array}
\right )
$$
realizes the skew-symmetric bilinear form as a degeneration of the form on $k^{2}_{x,y}$ given by $(x,x) = (x, y) = -(y, x) = 1+ (y,y) = 1$. This is the only non-trivial deformation among non-degenerate bilinear forms on $k^{2}$, see \cite{Sergeichuk}. The question of determining the closures of congruence classes of matrices is studied in \cite{Dmytryshyn}, by first establishing a canonical form for every congruence class of matrices. 
\end{rem}

\begin{exam} \label{Graph in dim 4} 
Here is $G_{4, k}$ for characteristic of $k$ not 2, where we have omitted arrows that can be obtained as a composition of the given arrows.  The commutative component appeared in \cite{Presotto}, and is classical.
$$
\begin{xy}
(0, 60)*+{\mathlarger{ \frac{k[x,y]}{(x^{2}-y^{2}, xy)}}} = "A1" ; 
(25, 60)*+{\mathlarger{\frac{k[x]}{(x^{4})}}} = "A2" ;
(50, 70)*+{\mathlarger{\frac{k[x]}{(x^{3})}} \oplus k} = "A3" ;
(50, 50)*+{\mathlarger{\frac{k[x]}{(x^{2})}} \oplus \mathlarger{\frac{k[y]}{(y^{2})}} } = "A4" ;
(85, 60)*+{\mathlarger{\frac{k[x]}{(x^{2})}} \oplus k \oplus k} = "A5" ;
(110, 60)*+{k^{\oplus 4}} = "A6" ;
(-5, 30)*+{\bigwedge(k^{2})} = "B1" ;
(100, 30)*+{\Mat_{2}(k)} = "B3" ;
(51, 30)*+{ 
 \left \{ \mathsmaller{ \mathsmaller{
\begin{pmatrix}
a & 0 & 0 & 0 \\
0 & a & 0 & d \\
c & 0 & b & 0 \\
0 & 0 & 0 & b \\
\end{pmatrix} } }
: a, b, c, d \in k \right \}  
 \cong \Pi(A_{2})} = "B2" ;
(5, -5)*+{\mathlarger{\frac{k \langle x,y \rangle}{(x^{2}, y^{2}, xy- \lambda yx)},}} = "D1"; 
(7, 10)*+{\mathlarger{\frac{k \langle x,y \rangle}{(y^{2}, x^{2}+yx, xy+yx)}}} = "C1";
(37, -5)*+{ \lambda \in k \backslash \{ -1, 0, 1 \} } = "D2";
{\ar@{->} "A1"; "A2"}  ;
{\ar@{->} "A2"; "A3"}  ;
{\ar@{->} "A2"; "A4"}  ;
{\ar@{->} "A3"; "A5"}  ;
{\ar@{->} "A4"; "A5"}  ;
{\ar@{->} "A5"; "A6"}  ;
{\ar@{->} "B1"; "B2"}  ;
{\ar@{->} "B2"; "B3"}  ;
{\ar@{->} "B1"; "C1"}  ;
\end{xy}
$$ 
Notice that the algebras on the far left are of the form $A(k^{2}, (-,-))$:
\begin{align*}
&A(k^{2}, \left ( \begin{array}{cc} 1 & 0 \\ 0 & 1 \end{array} \right ) ) \cong k[x, y]/(x^{2}-y^{2}, xy) \\
&A(k^{2}, \left ( \begin{array}{cc} 0 & 1 \\ -1 & 0 \end{array} \right ) ) \cong \bigwedge(k^{2}) \\
&A(k^{2}, \left ( \begin{array}{cc} 0 & 1 \\ \lambda & 0 \end{array} \right ) ) \cong k \langle x, y \rangle /(x^{2}, y^{2}, xy- \lambda yx) \\
&A(k^{2}, \left ( \begin{array}{cc} 1 & 1 \\ -1 & 0 \end{array} \right ) ) \cong k \langle x, y \rangle /(y^{2}, x^{2}+yx, xy+yx). 
\end{align*}
The deformation of $\Pi(A_{2}) = k[e_{1}, e_{2}] \oplus k[\alpha, \alpha^{*}]$ to the matrix algebra is given by deforming the preprojective relation $[\alpha, \alpha^{*}] = t(e_{1} - e_{2})$. The degeneration to $\bigwedge(k^{2})$ is the filtered Frobenius degeneration produced in the proof of Proposition \ref{prop:MostDegenerate}, with $\lambda$ given by summing the non-diagonal entries, which vanishes on the identity. The deformation from $\bigwedge(k^{2})$ to $k \langle x, y \rangle / (y^{2}, x^{2} +yx, xy+yx)$ comes from a deformation of the underlying bilinear forms, as explained in Remark \ref{rem:bilinearformdeformations}. 

\end{exam}

\subsection{Degenerations of Preprojective Algebras}
We define a notion of degeneration of decorations, which gives rise to degenerations at the level of decorated path and preprojective algebras. We use this notion to view decorated preprojective algebras as degenerations of ordinary preprojective algebras. \\
\\
We want to deform pairs $(A, M)$ where $A$ is a $k$-algebra and $M$ is an $A$-bimodule.  

\begin{defn} \label{module deg}
Let $A$, $B$ be associative algebras and $M$ an $A$-bimodule and $N$ a $B$-bimodule. One says $(A, M)$ \emph{deforms} to $(B, N)$ if there exists a pair $(D, P)$ where $D$ is a $k[[t]]$-module  deforming $A$ to $B$ and $P$ is a $D$-bimodule such that,
\begin{itemize}
\item $P/tP \cong M$ as $A$-bimodules
\item $P[t^{-1}] \cong N(( t))$ as $B((t))$-bimodules
\end{itemize}
Moreover, one says the deformation is \emph{flat} if $(D, P) \cong (A[[t]], M[[t]])$ as $k[[t]]$-modules. 
\end{defn}

One can modify this definition to work for left or right modules as well. Additionally, if $D$ is a symmetric Frobenius deformation then we say the pair $(D, P)$ is a symmetric Frobenius deformation.

\begin{rem}
In the presence of condition (F), one only needs to degenerate $A$ as an $(A, k)$-bimodule, $(k, A)$-bimodule, or $(A, A)$-bimodule. All such degenerations come from degenerations of $A$ as an algebra over $k$. Moreover, flatness of the module deformation is precisely flatness of the algebra deformation.
\end{rem}

Hence, we arrive at the notion of degeneration of a decoration $(Q, D_{Q})$ by degenerating each $k$-algebra and bimodule.

\begin{defn}
Let $D_{Q} = (A_{i}, M^{\alpha})$ and $D'_{Q} = (B_{i}, N^{\alpha})$ be decorations of $Q$. One says $D_{Q}$ \emph{deforms} to $D'_{Q}$ if the pair $( \oplus_{i \in Q_{0}} A_{i}, \oplus_{\alpha \in Q_{1}} M^{\alpha})$ deforms to $(\oplus_{i \in Q_{0}} B_{i}, \oplus_{\alpha \in Q_{1}} N^{\alpha})$.
\end{defn}

Notice that degenerations of decorated quivers need not preserve condition (F), even for the quiver with a single vertex and no arrows, see Example \ref{exam cliff}. Consequently, we exclusively consider \emph{symmetric} Frobenius degenerations of decorations, which do preserve condition (F). 

Conceptually, one can view the pair $(\oplus_{i \in Q_{0}} D_{i}, \oplus_{\alpha \in Q_{1}} P^{\alpha})$ as a decoration for Q by symmetric Frobenius $k[[t]]$-algebras. Now the path algebra, $P_{k[[t]]}(Q, (D_{i}, P^{\alpha}))$,
is a deformation from $P_{k}(Q, (A_{i}, M^{\alpha}))$ to $P_{k}(Q, (B_{i}, N^{\alpha}))$. And, using the Frobenius form on the deformation, the preprojective algebra
$\Pi_{k[[t]]}(Q, (D_{i}, P^{\alpha}))$ is a deformation from $\Pi_{k}(Q, (A_{i}, M^{\alpha}))$ to $\Pi_{k}(Q, (B_{i}, N^{\alpha}))$.

Having defined degenerations of decorations we now define foldings of decorations. If $Q$ is a quiver with graph automorphisms $\Aut(Q)$, then one can form the quotient quiver $Q/\Aut(Q)$ and decorate it with decoration $D_{Q}/\Aut(Q) = (B_{j}, N^{\beta})$ defined by:
$$
B_{j} := \bigoplus_{i \in \Aut(Q)\cdot j} A_{i} 
\hspace{1cm} \text{and} \hspace{1cm} 
N^{\beta} := \bigoplus_{\alpha \in \Aut(Q) \cdot \beta} M^{\alpha}.
$$
In this case, we say $(Q, D_{Q})$ \emph{folds} to $(Q/\Aut(Q), D_{Q}/\Aut(Q))$.

Consider $Q$ a quiver with the decoration 
$$
C_{Q} := \{ A_{i} = k, M^{\alpha} = k \},
$$
known as the \emph{constant} $k$-decoration. With this decoration, one recovers the classical notions,
$$
\Rep(Q, C_{Q}) \cong \Rep(Q) 
\hspace{1cm}
P(Q, C_{Q}) = P(Q)
\hspace{1cm}
\Pi(Q, C_{Q}) = \Pi(Q).
$$
Next suppose $Q$ has non-trivial graph automorphisms and fold $(Q, C_{Q})$ to \\
$(Q/\Aut(Q), C_{Q}/\Aut(Q))$, so the decoration consists of sums of copies of $k$ at each vertex. Notice,
\begin{align*}
\Rep(Q) \cong \Rep(Q, C_{Q}) &\cong \Rep(Q/\Aut(Q), C_{Q}/\Aut(Q)) \\
P(Q) = P(Q, C_{Q}) &\cong P(Q/\Aut(Q), C_{Q}/ \Aut(Q)) \\
\Pi(Q) = \Pi(Q, C_{Q}) &\cong \Pi(Q/\Aut(Q), C_{Q}/\Aut(Q))
\end{align*}
so we've built a decorated quiver with (1) the same preprojective algebra as an ordinary quiver and (2) a non-trivial degeneration to a decorated quiver for each symmetric Frobenius degeneration at each folded vertex. 

It is natural to ask which decorated quivers, and hence decorated preprojective algebras, arise from ordinary quivers, and hence ordinary preprojective algebras, in this way. In the case of a quiver with no arrows, this is the class of Frobenius algebras with a Frobenius deformation  to $k^{\oplus n}$, denoted $\cF_{n}$. More generally, any decorated quiver $(Q, D_{Q})$ satisfying condition (F) with $\oplus_{i} A_{i} \in \cF_{\sum_{i} \dim_{k}(A_{i})}$ deforms to an ordinary quiver. Consequently, a large class of decorated preprojective algebras deform to ordinary preprojective algebras, including many in \cite{GLS}. 

\begin{exam}
For any dimension vector $d=(d_{i}) \in \bN^{n}$, and choice of Frobenius algebra $F_{i} \in \cF_{d_{i}}$ for $i \in \{ 1, \dots, n \}$ can obtain a decorated $A_{n}$ quiver with the $k$-algebras given by $F_{i}$, and the arrows decorated by $F_{i} \otimes F_{i+1}$ as a degeneration of the $k$-constant decoration of the quiver,
$$ 
\vcenter{ 
\xymatrix{
\bullet \ar[r] \ar[rd] \ar[rddd] & \bullet 
\ar[r] \ar[rd] \ar[rddd] & \bullet \ar[rr] \ar[rrd] \ar[rrddd] & \cdots & \bullet \\
\bullet \ar[ru] \ar[r] \ar[rdd] & \bullet \ar[ru] \ar[r] \ar[rdd] & \bullet \ar[rru] \ar[rr] \ar[rrdd] & \cdots & \bullet\\
\vdots & \vdots  & \vdots & \cdots & \vdots \\
\bullet \ar[r] \ar[ruu] \ar[ruuu] & \bullet \ar[r] \ar[ruu] \ar[ruuu] & \bullet \ar[rr] \ar[rruu] \ar[rruuu] & \cdots & \bullet
}
} 
$$
with $d_{i}$ vertices in the $i$th column. 
\end{exam}

\subsection{Flatness Conjecture}
In this subsection, we conjecture that all flat Frobenius degenerations of decorations induce flat degenerations of the preprojective algebras. 

Recall that a deformation of associative algebras $D$ deforming $A$ to $B$ is flat if $D \cong A[[t]]$ as $k[[t]]$-modules. 
If $D$ is finitely-generated over $k[[t]]$ then flatness is equivalent to $\dim_{k}(A) = \dim_{k}(B)$. In the absence of flatness, the vector space underlying $B$ is no larger than that of $A$. 

\begin{prop} \label{inequality}
Let $A$ and $B$ be finite-dimensional $k$-algebras. If $A$ deforms to $B$ as $k$-algebras, then $\dim_{k}(A) \geq \dim_{k}(B)$. 
\end{prop}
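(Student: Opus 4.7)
The plan is to reduce this to an application of Nakayama's lemma in the $t$-adically complete setting, exploiting the fact that $D/tD$ is finite-dimensional.

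First, I would choose a $k$-basis $a_{1}, \dots, a_{n}$ of $A$ where $n := \dim_{k}(A)$, and lift each $a_{i}$ to an element $\tilde{a}_{i} \in D$. Since the $\tilde{a}_{i}$ reduce mod $t$ to a generating set of $A = D/tD$, the (topological) Nakayama lemma applied to $D$ as a $k[[t]]$-module implies that $\tilde{a}_{1}, \dots, \tilde{a}_{n}$ generate $D$. Concretely, for any $d \in D$ one writes $d = \sum_{i} c_{i}^{(0)} \tilde{a}_{i} + t d_{1}$ with $c_{i}^{(0)} \in k$, then iterates on $d_{1}$, producing a series $\sum_{j \geq 0} t^{j} \sum_{i} c_{i}^{(j)} \tilde{a}_{i}$ which converges $t$-adically to $d$.

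Second, inverting $t$ shows that $D[t^{-1}]$ is generated as a $k((t))$-module by the images of $\tilde{a}_{1}, \dots, \tilde{a}_{n}$, so
$$
\dim_{k((t))} D[t^{-1}] \leq n.
$$
Since $D[t^{-1}] \cong B((t)) \cong B \otimes_{k} k((t))$, one has $\dim_{k((t))} B((t)) = \dim_{k}(B)$, and combining these inequalities gives $\dim_{k}(B) \leq n = \dim_{k}(A)$, as desired. Note flatness corresponds precisely to equality holding here, since the chosen lifts form a free $k[[t]]$-basis of $D \cong A[[t]]$ in that case; non-flatness allows relations among the $\tilde{a}_{i}$ to appear after inverting $t$, strictly reducing the generic dimension.

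The main obstacle is ensuring that Nakayama's lemma genuinely applies, which requires $D$ to be $t$-adically complete (or at least that the series written above lies in $D$). This is implicit in the standard notion of a formal deformation adopted in the paper, since flat deformations are modeled on $A[[t]]$, which is complete; any reasonable framework will bake completeness into the definition of $D$. If one wanted to avoid completeness hypotheses, an alternative is to quotient $D$ by its $t$-torsion submodule $T$, obtaining an injection $D/T \hookrightarrow D[t^{-1}]$, and then compare $(D/T)/t(D/T)$, which is a quotient of $A$, with $D[t^{-1}]$ via an analogous argument; but this route is fiddlier and the completeness approach is cleanest.
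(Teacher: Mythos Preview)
Your argument is correct and actually more general than what the paper does. The paper does not prove the proposition in the stated generality; immediately after the statement it restricts to the filtered case, where $B$ carries a finite filtration with $\text{gr}(B)\cong A$, and simply sums the inequalities $\dim A_n \geq \dim(B_{\leq n}/B_{\leq n-1})$ coming from the surjections $A_n \twoheadrightarrow B_{\leq n}/B_{\leq n-1}$. This is entirely elementary but covers only filtered deformations, which is all the paper ever uses.

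Your Nakayama approach handles arbitrary formal deformations and is the right way to get the general statement. Your caution about $t$-adic completeness is not just prudent but essential: with the paper's bare definition of formal deformation (no completeness or separatedness imposed on $D$), the inequality can fail. For instance $D = k[[t]] \times k((t))$ has $D/tD \cong k$ while $D[t^{-1}] \cong k((t))\times k((t))$, so $A=k$ ``deforms'' to $B=k\times k$ and $\dim A < \dim B$. Under any reasonable convention (e.g.\ $D$ complete, or $D$ arising as a Rees algebra), your lifting-plus-Nakayama argument goes through exactly as you wrote it. So: different route from the paper, strictly more general, and your identification of the hidden hypothesis is on point.
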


In all of the cases where we use this inequality one has a finite filtration on $B$, $B_{\leq 0} \subset B_{\leq 1} \subset \cdots \subset B_{\leq m} = B$ such that $A$ is the associated graded algebra and hence there is a surjection, $\varphi_{n}: A_{n} \rightarrow B_{\leq n}/B_{\leq n-1}$ for each $n$ and hence,
\begin{align*}
\dim(A) &= \sum_{n=0}^{m} \dim(A_{n}) \geq \sum_{n=0}^{m} \dim(B_{\leq n}/B_{\leq n-1}) 
= \sum_{n=0}^{m}  \dim(B_{\leq n}) - \dim(B_{\leq n-1} ) \\
&= \dim(B_{\leq m}) - \dim(B_{\leq -1}) = \dim(B).
\end{align*}
In general, the inequality can be strict, e.g. $k[[t]][x]/(xt, x^{2})$ deforms $k[x]/(x^{2})$ to $k$.

Additionally, in our set-up $A$ and $B$ both have $\bN$-gradings, distinct from the grading already on $A$ as a filtered degeneration, by path length. The deformation preserves the length of each element and so we will have the more refined inequality on the level of Hilbert--Poincar\'{e} series
$$
h_{A}(t) := \sum_{j \in \bN} \dim(A_{j})t^{n} 
$$
given by $h_{A}(t) \geq h_{B}(t)$ in the sense that the difference $h_{A}(t)-h_{B}(t)$ has non-negative coefficients. 

In the interest of computing the Hilbert--Poincar\'{e} series of decoration preprojective algebras, we would like to know when the degeneration from the ordinary preprojective algebra, defined in the previous section, is flat. Flatness of the length zero and one paths says the algebras and the bimodules in the decoration have the same dimension. We conjecture that this is all that is needed to ensure flatness in any length. 

\begin{conj} \label{conj:flatness}
\conjflatness
\end{conj}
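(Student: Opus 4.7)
My strategy is to exhibit a $k[[t]]$-algebra interpolating between $\Pi(Q, D_{Q})$ and $\Pi(Q, D_{Q}')$ and to show that this family is flat over $k[[t]]$, from which equality of Hilbert--Poincar\'{e} series in each path-length degree will follow by base change. The construction is already laid out just before the conjecture: given the flat symmetric Frobenius deformation $(D_{i}, P^{\alpha})$ of the decoration, the path algebra $P_{k[[t]]}(Q, (D_{i}, P^{\alpha}))$ is a flat $k[[t]]$-algebra because the tensor algebra construction preserves flatness over the base ring, and the Frobenius forms on the $D_{i}$ assemble (via the same comultiplication-of-unit discussion as in Remark \ref{rem:well-defined}) into a canonical length-two element $r_{k[[t]]}$ whose reductions at the two endpoints are the preprojective relations of $\Pi(Q, D_{Q})$ and $\Pi(Q, D_{Q}')$. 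I would then work with $\Pi_{k[[t]]} := P_{k[[t]]}(\overline{Q}, \overline{(D_{i}, P^{\alpha})}) / \langle r_{k[[t]]} \rangle$ and attempt to prove that the $n$-th path-length piece $\Pi_{k[[t]]}^{n}$ is free (equivalently, torsion-free, since it is finitely generated) as a $k[[t]]$-module for each $n$.

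Since the path-length grading is preserved by the deformation, Proposition \ref{inequality} applies in each degree and immediately yields the coefficient-wise inequality $h_{\Pi(Q, D_{Q})}(t) \geq h_{\Pi(Q, D_{Q}')}(t)$. The genuine content of the conjecture is the reverse inequality, equivalent to the $t$-torsion-freeness of each $\Pi_{k[[t]]}^{n}$: concretely, no new relation should appear in the special fibre beyond the $t \to 0$ limits of relations already present generically.

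The main obstacle, and the reason the conjecture is open in general, is precisely this reverse direction. Two complementary approaches look plausible. The first is combinatorial/PBW-style: by induction on path length, construct a $k[[t]]$-spanning family of $\Pi_{k[[t]]}$ of cardinality equal to $\dim_{k} \Pi(Q, D_{Q})^{n}$ in each degree, so that the inequality above forces the family to be a free basis. This is essentially the route taken in Section 5 for Dynkin quivers, where one reduces to the most degenerate decoration and writes down an explicit basis. The second is Koszul-theoretic: show that the quadratic preprojective ideal is Koszul in a sense depending only on the dimensions encoded by the matrix $A$, so that flatness of $\Pi_{k[[t]]}$ follows from flatness of a Koszul resolution over $k[[t]]$, parallel to the classical arguments for ordinary preprojective algebras. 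Either approach requires handling the asymmetric dichotomy $Q_{1} = Q_{1}' \sqcup Q_{1}''$ of Definition \ref{def:dec preproj alg}, and I expect the principal technical difficulty to be controlling these two types of arrows uniformly across the family -- in particular, showing that the $A_{i}$-central element $\sum_{l} e_{l}^{i} \otimes 1 \otimes f_{l}^{i}$ behaves as expected under specialization, which is the precise role played by flatness of the Frobenius form deformation.
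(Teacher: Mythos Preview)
This statement is a \emph{conjecture}, not a theorem; the paper does not prove it in general and explicitly labels it open. So there is no ``paper's proof'' to compare against. What the paper does provide is (i) the easy inequality $h_{\Pi(Q,D_Q)}(t)\geq h_{\Pi(Q,D_Q')}(t)$ via Proposition~\ref{inequality}, (ii) a proof in the Dynkin case (Theorem~\ref{thm:Dynkin}) by explicit basis computation for the most degenerate decoration, (iii) the $\tilde D_4$ case from \cite{Presotto}, and (iv) computer evidence. You have correctly identified all of this structure: you note that one inequality is immediate from semicontinuity, that the reverse inequality is the genuine content, and that the Dynkin proof in Section~5 follows your first ``PBW-style'' route of producing an explicit spanning set of the right size at the degenerate endpoint.

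That said, your write-up is a strategy sketch rather than a proof, and you say so yourself (``the reason the conjecture is open in general''). Neither of your two proposed approaches is carried out: the combinatorial route requires actually constructing bases, which the paper only manages case-by-case for Dynkin types; the Koszul-theoretic route is precisely what the paper alludes to when it says ``if we assume this resolution degenerates to a resolution for the decorated setting'' before stating Conjecture~\ref{conj:strongflatness}, and it remains conjectural there as well. So your proposal is an accurate summary of the landscape and matches the paper's own framing, but it does not constitute a proof of the conjecture --- nor does the paper claim one.
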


For ordinary preprojective algebras of Dynkin quivers, one can use the Euler-Poincar\'{e} principle applied to the Schofield resolution to get a formula for the Hilbert--Poincar\'{e} series:
$$
h_{\Pi(Q)}(t) = \frac{1-Pt^{\text{max}}}{1- At + t^{2}} 
$$
where $A$ is the adjacency matrix for $Q$, and $P$ is the permutation matrix of the vertices induced from the Nakayama automorphism, see \cite{MOV}. In the non-Dynkin case, one can again use the Euler-Poincar\'{e} principle, now applied to the Koszul resolution, to get a formula for the Hilbert--Poincar\'{e} series:
$$
h_{\Pi(Q)}(t) = \frac{1}{1- At + t^{2}}. 
$$

If we assume the Schofield and Koszul resolutions degenerate to \emph{resolutions} in the decorated setting, then the same analysis produces the following formula. Let $A = (a_{i,j})$ be a $|Q_{0}| \times |Q_{0}|$ matrix with coefficients in $\bZ$ defined by 
$$
a_{i,j}:= \sum_{\alpha: i \rightarrow j \in Q_{1}} \frac{\dim_{k}(M^{\alpha})}{\dim_{k}(A_{i})}.
$$
Let $D = (d_{i,j})$ be a diagonal matrix defined by $d_{i,j} = \delta_{ij} \dim_{k}(A_{i})$. We conjecture the following formula holds:
\begin{conj} \label{conj:strongflatness}
\conjstrongflatness
\end{conj}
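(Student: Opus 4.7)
The plan is to mimic the Schofield resolution strategy that produces the Hilbert--Poincar\'{e} series for ordinary preprojective algebras and to extend it to the decorated setting, reading off the matrix formula via Euler--Poincar\'{e}. Write $A := \bigoplus_{i \in Q_{0}} A_{i}$ and $\Pi := \Pi(Q, D_{Q})$, viewed as an $A$-bimodule algebra. The target is a projective bimodule resolution of $\Pi$ of the shape
$$
\Pi \otimes_{A} \omega \otimes_{A} \Pi \longrightarrow \Pi \otimes_{A} \Big( \bigoplus_{\alpha \in \overline{Q}_{1}} M^{\alpha} \Big) \otimes_{A} \Pi \longrightarrow \Pi \otimes_{A} \Pi \longrightarrow \Pi \longrightarrow 0,
$$
where the last map is multiplication, the second-to-last is the standard Koszul-type differential built from the inclusions of the doubled arrows, and $\omega$ is the $A$-bimodule spanned by the local preprojective element $e_{i} r$ (whose graded dimension contributes the matrix $D$ in degree $2$ since $e_{i} r$ is $A_{i}$-central). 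Condition (F) is precisely what makes each $M^{\alpha}$ projective as an $(A_{s(\alpha)}, A_{t(\alpha)})$-bimodule, so the middle and right terms are projective $\Pi$-bimodules.

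Granted exactness of this complex, one applies Euler--Poincar\'{e} with respect to the path-length grading. The length-zero term contributes the matrix $D$, the length-one term contributes $-At$ where the entries
$a_{ij} = \sum_{\alpha: i \to j} \dim_{k}(M^{\alpha})/\dim_{k}(A_{i})$
arise from accounting for both $\alpha$ and $\alpha^{*}$ in $\overline{Q}_{1}$, and the length-two term contributes $+t^{2}$ multiplied by $D$ (since $e_{i} r \in A_{i} e_{i} r A_{i}$ is a free rank-one bimodule at vertex $i$ of dimension $\dim_{k}(A_{i})$). Factoring out $D$ from the right then yields $h_{\Pi}(t) = D(1 - At + t^{2})^{-1}$, which is the formula in the non-Dynkin case.

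In the Dynkin case, the resolution does not simply truncate to length two: Theorem~\ref{thm:Dynkin} furnishes a Frobenius structure on $\Pi(Q, D_{Q})$ with Nakayama automorphism $\nu$, and the standard argument for self-injective algebras shows that applying $\Hom_{\Pi^{e}}(-, \Pi^{e})$ to the initial bimodule resolution produces, after a shift, the same resolution twisted by $\nu$. Splicing yields a finite periodic resolution with a top term $\Pi \otimes_{A} {}_{1}A_{\nu} \otimes_{A} \Pi$ concentrated in path degree $\max_{t} + 2$; its Hilbert contribution is $t^{\max_{t}+2} P$ where $P$ is the permutation matrix of $\nu$ on $\{1_{A_{i}}\}$. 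The signs work out, after telescoping, to give the multiplicative correction $(I + t^{\max_{t}+2} P)$ on the left of $D(1 - At + t^{2})^{-1}$.

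The main obstacle is proving exactness of the Schofield-style bimodule complex in the decorated setting, rather than just checking it by Euler--Poincar\'{e}. The classical proof uses projectivity of the arrow module and the explicit form of the moment map element to build an $A$-linear contracting homotopy; here, the dichotomy between arrows with $M^{\alpha} = A_{s(\alpha)} \otimes A_{t(\alpha)}$ and those with $M^{\alpha} = A_{s(\alpha)} = A_{t(\alpha)}$, together with the passage through the Frobenius dual basis in the definition of $r$, complicates the contracting homotopy. Consequently, establishing exactness in full generality is essentially equivalent to the flatness statement of Conjecture~\ref{conj:flatness}, which is why the two conjectures are coupled; the Dynkin case is unconditional because the Frobenius structure of Theorem~\ref{thm:Dynkin} gives the self-duality needed to close the resolution.
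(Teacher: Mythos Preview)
This statement is a conjecture; the paper proves only the Dynkin case and leaves the non-Dynkin case open (supported by the computer evidence in Propositions~\ref{prop:computation-star} and~\ref{prop:computation-matrix}). Your proposal is precisely the heuristic the paper records as the \emph{motivation} for the formula---see the sentence immediately preceding Conjecture~\ref{conj:strongflatness}: ``If we assume this resolution degenerates to a resolution for the decorated setting, then the same analysis produces the following formula.'' You have correctly identified that exactness of the decorated Schofield-type complex is the whole difficulty and is essentially Conjecture~\ref{conj:flatness}; so what you have written is a reduction of one open conjecture to another, not a proof.

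The gap is in your claim that the Dynkin case becomes unconditional via Theorem~\ref{thm:Dynkin}. The Frobenius structure on $\Pi$ lets you dualize an \emph{already established} projective bimodule resolution and splice to obtain periodicity, but it does not by itself prove exactness of your three-term complex at the $\omega$ term; self-injectivity is a property of the algebra, not of your candidate complex, and you still need the kernel of the map out of $\Pi \otimes_{A} (\oplus_{\alpha} M^{\alpha}) \otimes_{A} \Pi$ to be generated exactly by $r$. The paper never constructs a decorated Schofield resolution. Its proof of the Dynkin case runs in the opposite direction: Proposition~\ref{inequality} (upper semi-continuity of dimension under degeneration) reduces the question to bounding $\dim \Pi(Q, D_{Q})$ from above for the single most degenerate decoration of each non-simply-laced type, and Section~5 then exhibits explicit spanning sets of exactly the required size for $\Pi(B_{n})$, $\Pi(C_{n})$, $\Pi(F_{4})$, and $\Pi(G_{2})$ by hand, using the Diamond Lemma and Gr\"{o}bner-basis reductions. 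The Hilbert--Poincar\'{e} series, and their agreement with the conjectured formula, are then read off directly from those explicit bases.
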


By Proposition \ref{inequality}, it suffices to prove Conjecture \ref{conj:flatness}, at the extremes. That is, for a fixed quiver $Q$, it suffices to prove the conjecture for $D_{Q} = (A_{i}, M^{\alpha})$ with each $A_{i}$ a most degenerate Frobenius algebra and with each $A_{i}$ a most deformed Frobenius algebra. By Proposition \ref{prop:MostDegenerate} and Remark \ref{rem:bilinearformdeformations}, the most degenerate Frobenius algebras are given by a vector space with a non-degnerate billinear form without degenerations to an inequivalent non-degenerate bilinear form. 

In the commutative case, if $k$ is algebraically closed and characteristic zero, with $n < 14$, $k^{\oplus n}$ is the unique most deformed Frobenius algebra and $Z_{n}$ is the unique most degenerate Frobenius algebra, by \cite{Notari} and Corollary \ref{cor:MostDegenerateCommutative}. Therefore, to prove the conjecture for commutative decorations of total dimension less than 14,
one only needs to establish a single equality of Hilbert--Poincar\'{e} series. Additionally, one Hilbert--Poincar\'{e} series is equal to an ordinary preprojective algebra and hence is known and the other cannot have any smaller coefficients. Therefore, the conjecture reduces to finding a spanning set (of a specified size) for a single algebra. 
 
In particular, in the Dynkin setting, the automorphism groups of the underlying diagrams are: (1) trivial in types $B, C, F, G$, (2) $\bZ/2 \bZ$ in types $A, D, E$ except $D_{4}$ and (3) the symmetric group on the three valence one vertices in type $D_{4}$. Therefore, by folding the constant decoration at these vertices, one gets the Frobenius algebras $k, k^{\oplus 2}, k^{\oplus 3}$.  Consequently, in the Dynkin setting, we only need to consider the maximal Frobenius degeneration of decorations $k, k^{\oplus 2}, k^{\oplus 3}$ to $k$,  $k[x]/(x^{2})$, and $k[x]/(x^{3})$ respectively. In Section 5, we exploit these observations to prove the conjecture in the Dynkin case. 

\begin{rem}
By generalizing a result of Etingof and Eu, in \cite{Etingof}, we believe that the conjecture can be reduced to the Dynkin and extended Dynkin cases. 
In more detail, for any connected non-Dynkin quiver $Q$ there exists an extended Dynkin subquiver $Q_{E}$. Filter $\Pi(Q)$ by giving arrows in $\overline{Q_{E}}$ degree one, while all others are degree zero. For any subquiver of $Q' \subset Q$, denote by $\widehat{Q'}$ the quiver with arrow set $Q'_{1}$ but vertex set $Q_{0}$. Then,
$$
gr(\Pi(Q)) = \Pi(\widehat{Q_{E}}) * \Pi(\widehat{Q \backslash Q_{E}}, (Q_{E})_{0})
$$ 
where $\Pi(\widehat{Q \backslash Q_{E}}, (Q_{E})_{0})$ is the algebra obtained from $\Pi(\widehat{Q \backslash Q_{E}})$ by deleting the relation at each vertex in $Q_{E}$. This determines the Hilbert--Poincar\'{e} series of $\Pi(Q)$ in terms of the known polynomial for $Q_{E}$ and a partial preprojective algebra. Future work will attempt to generalize this argument to the decorated setting.   
\end{rem}

Evidence for Conjecture \ref{conj:flatness} consists of the following:
\begin{itemize}
\item[(1)] a proof of the conjecture in the case $Q$ is Dynkin, see Theorem \ref{thm:Dynkin},
\item[(2)] a proof of the conjecture in the case $Q = \tilde{D}_{4}$ with commutative algebras at the vertices see \cite{Presotto}, and computer evidence suggesting that commutativity can be removed,
\item[(3)] and computer evidence in the case 
$$
Q = (\{1, \cdots, n \}, \{\alpha_{1}, \cdots, \alpha_{n-1} \}, s(\alpha_{i}) = i, t(\alpha_{i}) = n )
$$ 
for $n \leq 50$, in low path-graded pieces.  
\end{itemize}

Here we present computations in Magma providing evidence for the conjecture. \\
\\
Let $Q_{n+1}$ be a quiver with $n+1$ vertices, $n$ sources and one sink. $Q_{n+1}$ can be folded to $A_{2}$ with decoration $k^{\oplus n}$ and $k$ at the vertices. Then one can maximally degenerate the decoration to $Z_{n}$ and $k$. Denote the decorated preprojective algebra by $\Pi(Z_{n})$ and its $j$-path graded piece by $\Pi^{j}(Z_{n})$.\\
 \\
Using Magma we've shown,
\begin{prop} \label{prop:computation-star}
$$
\dim \Pi^{j}(Q_{n+1}) = \dim \Pi^{j}(Z_{n})
$$
for $(j, n) \in \{1, \dots, 10 \} \times \{ 1, \dots, 10 \}
\cup \{ 1, \dots, 5 \} \times \{ 11, \dots 50 \}.$ 
\end{prop}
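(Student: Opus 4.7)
The result is a direct computational verification, and the plan is to realize both sides as finitely-presented graded algebras and to compare the dimensions of their path-length graded pieces in parallel for the stated finite range of $(j, n)$.

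First, present $\Pi_k(Q_{n+1})$ as a quotient of the ordinary path algebra of the double $\overline{Q_{n+1}}$: with sources labelled $1, \ldots, n$, sink labelled $0$, and arrows $\alpha_i : i \to 0$ together with reverses $\alpha_i^* : 0 \to i$, the preprojective ideal is generated by the $n$ scalar relations $\alpha_i^* \alpha_i$ (one at each source) and the single relation $\sum_{i=1}^n \alpha_i \alpha_i^*$ at the sink. Next, use the folding-and-degeneration picture from Section~4.2 to present $\Pi_k(Z_n)$ concretely: $Q_{n+1}$ folds to the decorated $A_2$ with $(A_1, A_2) = (k^{\oplus n}, k)$ and $M^\alpha = k^{\oplus n}$, and the Frobenius degeneration $k^{\oplus n} \degen Z_n$ converts this into the decorated $A_2$ with $(A_1, A_2) = (Z_n, k)$ and $M^\alpha = Z_n$. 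Fixing the basis $\{1, x_1, \ldots, x_{n-2}, w\}$ of $Z_n$ with Frobenius form projecting onto $w$, the dual basis is $\{w, x_1, \ldots, x_{n-2}, 1\}$, so by Definition~\ref{def:dec preproj alg} the canonical preprojective relation at the $Z_n$-vertex is $c = 1 \otimes 1 \otimes w + w \otimes 1 \otimes 1 + \sum_{i=1}^{n-2} x_i \otimes 1 \otimes x_i$, while the relation at the $k$-vertex is $1 \otimes_{Z_n} 1$.

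In each fixed path length $j$ the corresponding graded piece of each algebra is finite-dimensional: it is spanned by length-$j$ words in the generators (tensored with bimodule elements on the $Z_n$ side), and the homogeneous part of the preprojective ideal in degree $j$ is the span of all left-right translates of the relations above by words of complementary length. This reduces each dimension to a single row-reduction over $k$, which Magma performs. Since both path algebras decompose as $\bigoplus_{i, l} e_i \Pi_k^j e_l$ via the source and target idempotents, one further refines the check block-wise, which substantially improves performance and additionally yields agreement on each block.

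The main obstacle is combinatorial explosion: in degree $j$ the free piece on the $\overline{Q_{n+1}}$ side has on the order of $n^j$ words, so both the relation matrix and its row reduction grow polynomially in $n$ but exponentially in $j$. This is precisely what restricts the verified region to $j \le 10$ for $n \le 10$ and $j \le 5$ for $11 \le n \le 50$. Pushing beyond this range would require either sparser linear algebra or a recursive reduction of the form sketched in the remark preceding this proposition, using a putative decorated Schofield resolution.
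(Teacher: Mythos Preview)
Your proposal is correct and matches the paper's approach: the paper offers no proof beyond the sentence ``Using Magma we've shown,'' and your write-up simply spells out how one would set up that Magma computation (presenting each side by generators and homogeneous relations, then comparing dimensions degree-by-degree via linear algebra over the stated finite range). One small notational quibble: with the paper's left-to-right path convention, the cycle $\alpha_i\alpha_i^{*}$ sits at the source $i$ and $\sum_i \alpha_i^{*}\alpha_i$ at the sink, the reverse of what you wrote; this does not affect the ideal or the computation.
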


\begin{cor}
Every degeneration of $\Pi(Q_{n})$ by degenerating the decoration is flat in the $j$th graded piece, for $j \leq 5$ and $n \leq 50$.
\end{cor}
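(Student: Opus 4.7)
The plan is to prove the corollary by a sandwich argument on each graded piece, using the graded refinement of Proposition \ref{inequality} together with Proposition \ref{prop:computation-star} to close the sandwich. Via the folding equivalence discussed earlier in the section, one has $\Pi_k(Q_{n+1}) \cong \Pi_k(A_2, (k^{\oplus n}, k))$, and any symmetric Frobenius degeneration of its decoration produces $\Pi_k(A_2, (A, k))$, where $A$ is an $n$-dimensional commutative symmetric Frobenius algebra that degenerates from $k^{\oplus n}$.

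First I would apply Corollary \ref{cor:MostDegenerateCommutative} to further Frobenius-degenerate the intermediate algebra $A$ to $Z_n$. Composing the two degenerations yields a chain
$$
\Pi_k(Q_{n+1}) \;\rightsquigarrow\; \Pi_k(A_2, (A, k)) \;\rightsquigarrow\; \Pi_k(Z_n).
$$
Because a degeneration of the decoration alters only the coefficient data ($k$-algebras and bimodules) while preserving the combinatorial path structure, the induced filtration on the preprojective algebra respects the path-length grading. The graded refinement of Proposition \ref{inequality}, already observed in the paper in the form $h_A(t) \geq h_B(t)$ with termwise nonnegative difference, then yields
$$
\dim_k \Pi_k^j(Q_{n+1}) \;\geq\; \dim_k \Pi_k^j(A_2, (A, k)) \;\geq\; \dim_k \Pi_k^j(Z_n)
$$
for every $j$.

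Finally, Proposition \ref{prop:computation-star} asserts that the outer terms are equal precisely in the stated range $j \leq 5,\ n \leq 50$ (which lies inside the union $\{1,\dots,10\}^2 \cup \{1,\dots,5\}\times\{11,\dots,50\}$ covered by the Magma computation). The sandwich therefore collapses, forcing $\dim_k \Pi_k^j(A_2, (A, k)) = \dim_k \Pi_k^j(Q_{n+1})$ for all intermediate $A$, which is exactly the statement of flatness in the $j$th graded piece. The only real subtlety to verify is that Proposition \ref{inequality} applies graded piece by graded piece; this is essentially automatic since the deformation parameter can be taken of path-degree $0$, so the deformed relations sit inside each fixed graded component of the decorated path algebra.
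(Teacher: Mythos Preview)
Your approach is exactly the sandwich argument the paper intends: degenerate the decoration all the way down to $Z_n$ via Corollary~\ref{cor:MostDegenerateCommutative}, invoke the graded version of Proposition~\ref{inequality} to trap the intermediate dimensions between the two extremes, and then use Proposition~\ref{prop:computation-star} to collapse the sandwich. This is precisely how the paper deduces the corollary from the preceding discussion.

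One correction: your inequalities point the wrong way. In the paper's convention, if $X$ degenerates to $Y$ (so $Y$ is the special fibre), Proposition~\ref{inequality} gives $\dim Y \geq \dim X$. Thus the chain $\Pi_k(Q_{n+1}) \rightsquigarrow \Pi_k(A_2,(A,k)) \rightsquigarrow \Pi_k(Z_n)$ yields
\[
\dim_k \Pi_k^j(Z_n) \;\geq\; \dim_k \Pi_k^j(A_2,(A,k)) \;\geq\; \dim_k \Pi_k^j(Q_{n+1}),
\]
the reverse of what you wrote. Since Proposition~\ref{prop:computation-star} equates the two outer terms, the conclusion is unaffected, but the direction matters for the logic of the argument. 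A minor point you left implicit: the intermediate $A$ is automatically commutative because it is a flat degeneration of the commutative algebra $k^{\oplus n}$ (commutators vanish generically, hence are $t$-torsion, hence zero in a flat family), which is what licenses the appeal to Corollary~\ref{cor:MostDegenerateCommutative}.
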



Additionally, we computed the dimension of the decorated preprojective algebra of the $A_{2}$ quiver, with labels $k$ and $\Mat_{n}(k)$, denoted $\Pi(\Mat_{n}(k))$.
\begin{prop} \label{prop:computation-matrix}
$$
\dim \Pi^{j}(\Mat_{n}(k)) = \dim \Pi^{j}(Z_{n^2})
$$
for $(j, n) \in \{1, \dots, 10 \} \times \{ 1, 2, 3 \}
\cup \{ 1, \dots, 5 \} \times \{ 4, 5, 6, 7 \}$.
\end{prop}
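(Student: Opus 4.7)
The statement is a finite computational verification in Magma, so the plan is to present both decorated preprojective algebras as quotients of explicit finitely-generated associative $k$-algebras, enumerate length-$j$ monomials modulo the defining ideal, and compare dimensions.

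For $\Pi_k(\Mat_n(k))$, I would take as generators the two vertex idempotents, the matrix units $\{E_{ab}\}$ spanning $\Mat_n(k)$ at vertex 2, and basis elements $\{\alpha_{ab}\}, \{\alpha^*_{ab}\}$ for the two bimodules on the doubled arrow, with relations coming from the multiplication in $\Mat_n(k)$, the bimodule action of $k$ and $\Mat_n(k)$ on $M^\alpha, M^{\alpha^*}$, and the preprojective relation $r$ of Definition \ref{def:dec preproj alg} using the trace form on $\Mat_n(k)$ (for which $\{E_{ab}\}$ is self-dual with dual $\{E_{ba}\}$). For $\Pi_k(Z_{n^2})$, I would use the analogous presentation with basis $\{1, x_1, \ldots, x_{n^2-2}, w\}$ of $Z_{n^2}$, relations $x_i x_j = \delta_{ij} w$ and $x_i w = 0$, and dual basis $\{w, x_1, \ldots, x_{n^2-2}, 1\}$. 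Each presentation is entered into Magma as a noncommutative polynomial algebra modulo its two-sided ideal; since the path-length grading descends to the preprojective algebra, for each $(j,n)$ in the stated range one computes a length-$j$-truncated Gr\"obner basis, enumerates length-$j$ normal monomials, and reads off the two graded dimensions. Entrywise agreement of the resulting tables then establishes the claim.

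The main obstacle is computational capacity rather than mathematical depth: the number of length-$j$ monomials grows roughly like $(2n^2)^j$ and the preprojective ideal expands into on the order of $n^4$ quadratic generators once one writes out the dual-basis sum, so the Gr\"obner basis computation in each degree becomes expensive quickly, pinning the verifiable range of $(j,n)$ to the stated values. A secondary concern is a careful translation of $r$ into Magma code, with a sanity check via Remark \ref{rem:well-defined} that the computed dimensions are invariant under the choice of Frobenius form and dual basis. Apart from these, the proof is simply the successful termination of finitely many Magma runs and their entrywise agreement.
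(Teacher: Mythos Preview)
Your proposal is correct and matches the paper's approach: this proposition is stated in the paper purely as the outcome of Magma computations, with no further proof given beyond the surrounding sentence ``Additionally, we computed\ldots''. Your outline of how one would actually implement such a verification (present each decorated preprojective algebra by generators and relations, compute a truncated Gr\"obner basis in each path degree, and compare normal-form counts) is a faithful elaboration of what the paper leaves implicit.
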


Combining the propositions, one has
$$
\dim \Pi^{j}(\Mat_{n}(k)) = \dim \Pi(Q_{n^{2} +1})
$$
despite there being no deformation of $k \rightarrow \Mat_{n}(k)$ to $k \rightarrow k^{\oplus n^{2}}$.

This suggests that Theorem 3.8 in \cite{Presotto}, giving the dimensions $\dim \Pi^{j}(F)$ for all $j \in \bN$ and for any $F$, a four-dimensional \emph{commutative} Frobenius algebra, may hold for non-commutative Frobenius algebras as well.

In the next section we prove the conjecture in the case of degenerations of Dynkin quivers.

\begin{thm} \label{thm:Dynkin}
\thmDynkin
\end{thm}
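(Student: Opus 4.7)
My plan is to reduce the theorem to a finite check on a short list of ``maximally degenerate'' decorated quivers, and then to produce explicit bases for those cases.

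\textbf{Reduction to the most degenerate decoration.} By Proposition \ref{inequality} (applied graded piece by graded piece along the path-length filtration on the Rees algebra of the deformation), any flat Frobenius deformation $D_{Q} \leadsto D_{Q}'$ yields the path-length-graded inequality $h_{\Pi(Q,D_{Q})}(t) \geq h_{\Pi(Q,D_{Q}')}(t)$, coefficient-wise. So flatness of the induced deformation of preprojective algebras is equivalent to equality of Hilbert--Poincar\'{e} series, and by transitivity of degenerations it is enough to compare any given $(Q, D_{Q})$ with a single \emph{most degenerate} decoration $(Q, D_{Q}^{\min})$. In the Dynkin range the relevant vertex-algebras have $k$-dimension $1$, $2$, or $3$: by Proposition \ref{prop:MostDegenerate} and the classification of low-dimensional Frobenius algebras used in the commutative flatness discussion, the most degenerate choices are $k$, $S=k[x]/(x^{2})$ and $S'=k[x]/(x^{3})$, yielding exactly the four maximally-folded Dynkin decorations $B_{n}, C_{n}, F_{4}, G_{2}$ listed in the introduction. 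On the other side, the generic fiber of any such deformation is an ordinary preprojective algebra $\Pi(\widetilde Q)$ for a simply-laced Dynkin quiver $\widetilde Q$ (the unfolded quiver), and so $h_{\Pi(Q,D_{Q})}(t)$ is the (known) Hilbert--Poincar\'{e} series of $\Pi(\widetilde Q)$. Thus the whole theorem reduces to showing, for each of $B_{n}, C_{n}, F_{4}, G_{2}$, that $\dim_{k}\Pi(Q,D_{Q}^{\min}) \leq \dim_{k}\Pi(\widetilde Q)$.

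\textbf{Upper bounds via explicit spanning sets.} I would produce, in each of the four families, a concrete finite set of paths spanning each $e_{i}\Pi(Q,D_{Q}^{\min})e_{j}$ whose cardinality matches the Hilbert--Poincar\'{e} series conjectured in Conjecture \ref{conj:strongflatness} (the explicit bigraded formulas stated just after it). Concretely: pick a $k$-basis of each algebra using $\{1,x\}$ or $\{1,x,x^{2}\}$ according as the vertex is decorated by $S$ or $S'$; write every path in the double quiver in a normal form in which, at each arrow decorated by $S$ (resp.\ $S'$), one records the $x$-content. Then use the preprojective relation to rewrite any path that crosses a vertex twice, eliminating either the ``longer'' outgoing piece or, at $S$-vertices, reducing $x\otimes 1 + 1\otimes x$ to zero, and similarly with the adjusted $x$-grading at $S'$. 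The relations are sufficiently local (they live at single vertices) that one gets a finite rewriting system whose irreducible words are bounded in number by the asserted formula. Summing over $i,j$ one compares with the Hilbert--Poincar\'{e} series of $\Pi(\widetilde Q)$ (using the standard formula, e.g.\ of Malkin--Ostrik--Vybornov or Bocklandt--Schedler) and finds equality; by the reduction above this proves flatness.

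\textbf{Frobenius property and Frobenius degeneration.} Once flatness is established, the Frobenius structure is essentially forced. In the generic fiber, $\Pi(\widetilde Q)$ is known to be Frobenius (Dynkin case, cf.\ Malkin--Ostrik--Vybornov), so its Nakayama automorphism acts by a permutation $P$ of the vertex idempotents. Via the flat filtration, the Frobenius form $\lambda_{\widetilde Q}$ restricts to a non-zero linear functional on the top path-graded piece of $\Pi(Q,D_{Q}^{\min})$; the explicit basis constructed above shows that this top piece is one-dimensional at the pairs $(i,P(i))$ and zero elsewhere. Non-degeneracy of the induced form then reduces to checking that left multiplication by any generator is injective on the dual basis, which follows from the normal-form argument. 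This gives $\Pi(Q,D_{Q}^{\min})$ a Frobenius structure compatible, through the Rees construction, with that of $\Pi(\widetilde Q)$, i.e.\ a genuine Frobenius degeneration. For the general $D_{Q} \leadsto D_{Q}'$ statement, one interpolates: the flat Frobenius deformation $D_{Q} \leadsto D_{Q}'$ lifts to a $k[[t]]$-flat decorated preprojective algebra with a $k[[t]]$-valued Frobenius form by Remark \ref{rem:FormOnDeformations}.

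\textbf{Main obstacle.} The technical heart is the spanning-set/normal-form argument for $B_{n}$, $C_{n}$, $F_{4}$. The preprojective relations intertwine the $x$-grading with the double-quiver reversal, so naive rewriting loops; one must set up the rewriting with respect to a carefully chosen monomial order (for $B_{n}$, ordering by vertex distance from the $S$-chain's short end; for $F_{4}$, by the vertex adjacent to the unique $(k,S)$ arrow). Verifying termination and confluence of this rewriting, and matching the irreducible-word count exactly with the ordinary preprojective dimension, is where the bulk of the work lies; $G_{2}$ is a small hand calculation that serves as a template.
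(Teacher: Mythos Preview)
Your proposal is correct and follows essentially the same route as the paper: reduce via Proposition \ref{inequality} to bounding $\dim\Pi(Q,D_Q^{\min})$ above by $\dim\Pi(\widetilde Q)$ for the four maximally degenerate Dynkin decorations $B_n,C_n,F_4,G_2$, then exhibit explicit spanning sets (the paper uses the Diamond Lemma for $B_n,C_n$, a subalgebra reduction for $F_4$, and a direct calculation for $G_2$), and finally read off the Frobenius form from the top path-graded piece. The only minor difference is that you argue the Frobenius form descends from the generic fiber via Remark \ref{rem:FormOnDeformations}, whereas the paper constructs it explicitly in each case by projection onto the maximal-length cycles; both are valid.
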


\begin{rem} \label{rem: clarification of the theorem}
Theorem \ref{thm:Dynkin} only addresses Frobenius degenerations of preprojective algebras coming from degenerations of decorated quivers. This does not include, for instance, the degeneration of $\Pi(A_{2})$ to $\wedge(k^{2})$ from Example \ref{Graph in dim 4}, which in degree zero degenerates $ke_{1} \oplus ke_{2}$ to $k$.  
\end{rem}

\begin{proof}
For each non-simply laced Dynkin diagram $X$ obtained by folding $Q$ denote by $\Pi(X)$ the quiver obtained in the statement above with the most degenerate decorations. To show flatness it suffices to compute the dimension of $\Pi(X)$, apriori no smaller than the dimension of $\Pi(Q)$ by Proposition \ref{inequality}. Hence we need to show:
\begin{itemize}
\item $\dim(\Pi(G_{2})) \leq \dim(\Pi(D_{4})) = 28$
\item $\dim(\Pi(F_{4})) \leq \dim(\Pi(E_{6})) = 156$
\item $\dim(\Pi(B_{n})) \leq \dim(\Pi(A_{2n-1})) = \frac{1}{3} n(2n-1)(2n+1)$
\item $\dim(\Pi(C_{n})) \leq \dim(\Pi(D_{n+1})) = \frac{1}{3} n(n+1)(2n+1)$
\end{itemize}  
This is done in each subsection of Section 5. \\
\\
In each case, the Frobenius structure on $\Pi(X)$ can be viewed as a composition:
$$
\lambda: \Pi(X) \overset{\pi}{\twoheadrightarrow} \Pi^{\text{max}_{t}}(X) \cong \Pi^{0}(X) \overset{\oplus \lambda_{i}}{\longrightarrow} k,
$$ 
where $\pi$ is the projection to the maximum length paths and $\oplus_{i} \lambda_{i}$ is the Frobenius form on the sum of the decorations at the vertices. Since $\oplus \lambda_{i}$ is non-degenerate by assumption, it suffices to show $\pi$ is non-degenerate. In other words, we need to show any path can be extended to a maximum length path, i.e. for all $q \in \Pi(X)$ there exists $p \in \Pi(X)$ such that $pq \in \Pi^{\text{max}_{t}}(X)$. This is apparent in the explicit bases we produce, where each basis element is visibly a subpath of a maximum length basis element. From this perspective the deformation to an ordinary preprojective algebra is a Frobenius deformation, coming from the Frobenius deformations of the decorations at the vertices.  
\end{proof}

\begin{rem} \label{rem:arbitraryfield}
These arguments hold over any field, as the classification of 3-dimensional Frobenius algebras over $k$ is independent of $k$, and the calculations do not involve dividing by integers. 
\end{rem}

\section{The Dynkin Case}
This section is dedicated to a proof of Theorem \ref{thm:Dynkin}, which proves Conjecture \ref{conj:flatness} in the case of $Q$ Dynkin. In the previous section, we reduced Theorem \ref{thm:Dynkin} to computing spanning sets for the decorated quivers,
$$
\Pi(B_{n}), \Pi(C_{m}), \Pi(F_{4}), \Pi(G_{2})
$$
for $n \geq 2$ and $m \geq 4$, defined by folding and maximally degenerating $(Q, C_{Q})$ with $Q$ respectively $A_{2n-1}$, $D_{m+1}$, $E_{6}$, and $D_{4}$.
In diagrams, 
$$
 \xymatrix@=1em{
& k  \ar[r] & \cdots \ar[r] & k & & &   \\
A_{2n-1}= k  \ar[]!<0ex,-4ex>;[ur]!<-1ex, -1.3ex>  \ar[]!<0ex,4ex>;[dr]!<0ex, 1ex> & & &   \fold & 
k \ar[r]  &  k \oplus k \ar[r] & \cdots \ar[r] & k \oplus k \degen  B_{n} = k \ar[r]  & S  \ar[r]& \cdots \ar[r] & S  \\
& k  \ar[r]  & \cdots  \ar[r] & k & & & \\
}
$$
$$
\xymatrix@=1em{
& & &  k  & & &  \\
D_{m+1} = k \ar[r] & \cdots \ar[r] & k \ar[ru] \ar[rd] & \fold & k \ar[r]  & \cdots \ar[r] & k \ar[r] & k \oplus k  \degen  C_{m} = k \ar[r]  & \cdots \ar[r] & k \ar[r] & S &  \\
& & &  k  & & & \\
}
$$
$$
\xymatrix@=1em{
& &  &  k & & \\
& &  k  \ar[ru] & & &  \\
E_6 = k \ar[r] & k \ar[ru] \ar[rd] & & \fold & k \ar[r]  & k \ar[r] & k \oplus k \ar[r] & k \oplus k  \degen  F_4 =k \ar[r] & k \ar[r] & S \ar[r] & S   \\
& &  k  \ar[rd] & & & \\
& &  &  k  & & \\
}
$$
$$
\xymatrix@=1em{
 && k & & & & & &  \\
D_{4} = & k \ar[r] \ar[ru] \ar[rd] & k & \fold  & k \ar[r]  &  k \oplus k \oplus   k & \degen  & G_{2} = k \ar[r]  &  S'  \\
& & k & & & & & & \\
}
$$
where $S := k[x]/(x^{2})$ and $S' := k[x]/(x^{3})$ are the most degenerate Frobenius algebras of dimension 2 and 3, respectively. Now,
$$
\begin{array}{ll}
\Pi(B_{n}) := \Pi(k \rightarrow S \rightarrow \cdots \rightarrow S) & \hspace{1cm} 
\Pi(F_{4}) := \Pi(k \rightarrow k \rightarrow S \rightarrow S) \\
\Pi(C_{n}) := \Pi(k \rightarrow \cdots \rightarrow k \rightarrow S) & \hspace{1cm} 
\Pi(G_{2}) := \Pi(k \rightarrow S') 
\end{array}
$$


We present these examples in a pedagogical order. The first example, $\Pi(G_{2})$ can be carefully worked out by hand without machinery and mirrors the computations of $\Pi(D_{4})$. The second example, $\Pi(F_{4})$ requires more knowledge of preprojective algebras and is more difficult. The third and fourth examples, $\Pi(B_{n})$ and $\Pi(C_{n})$ use Gr\"{o}bner bases.

\begin{rem}
The $B_{n}$ and $C_{n}$ examples are long since we provide the reader with the additional information of how to arrive at a basis, instead of merely checking a set is a basis, in order to demystify the computation. We also give a proof of linear independence by applying the Buchberger's algorithm and showing that all ambiguities are resolvable. In light of the inequality in Proposition \ref{inequality} this is not strictly necessary, but highlights a technique available for computing the Hilbert--Poincar\'{e} series of decorated quivers not arising as degenerations. The reader can distill each subsection to half a page by first listing the Gr\"{o}bner basis of the ideal $(r) \subset P(Q, D_{Q})$ and the proposed spanning set. Then one can verify that the elements of the Gr\"{o}bner basis lie in the ideal generated by the relations and show that the proposed set spans modulo (multiples of) leading terms of the Gr\"{o}bner basis.
\end{rem}

\begin{rem} \label{signs}
Recall that flipping the orientation of an arrow $\alpha \in Q_{1}$ in a quiver $Q$ gives a new quiver, denote it $Q^{\alpha}$, with the same double $\overline{Q} = \overline{Q^{\alpha}}$. Moreover, one can identify the path algebras
$$
P(\overline{Q}) \rightarrow P(\overline{Q^{\alpha}}) 
$$ 
by sending
$$
\alpha \mapsto \alpha^{*} \hspace{1cm} \alpha^{*} \mapsto -\alpha.
$$

Now let $Q$ be a bipartite graph and fix a decomposition $Q_{0} = Q_{0}' \sqcup Q_{0}''$. By applying orientation flips, one can arrange for each source to lie in $Q_{0}'$ and each target in $Q_{0}''$. Such flips give equivalences on the path algebras of the doubled quivers, described above. Then observe for any arrow $\alpha \in Q_{1}$, one has
$$
\left ( \sum_{e' \in Q_{0}'} e' - \sum_{e'' \in Q_{0}''} e''
\right ) (\alpha \alpha^{*} - \alpha^{*} \alpha ) = \alpha \alpha^{*} + \alpha^{*} \alpha
$$
and
$$
\left ( \sum_{e' \in Q_{0}'} e' - \sum_{e'' \in Q_{0}''} e''
\right ) (\alpha \alpha^{*} + \alpha^{*} \alpha ) = \alpha \alpha^{*} - \alpha^{*} \alpha.
$$
Therefore, the two-sided ideal generated by $(\alpha \alpha^{*} - \alpha^{*} \alpha)$ equals that generated by $(\alpha \alpha^{*} + \alpha^{*} \alpha)$, and consequently one can drop the signs in the definition of the preprojective algebra. This argument applies equally well in the setting of decorated bipartite quivers, which includes all examples in the next section. Therefore, for computations in Section 5 we will drop all minus signs in the relations for convenience. 
\end{rem}

\subsection{Case: $\Pi(G_{2})$}
Let $Q = A_{2}$ with $D_{Q} = ( \{k, S:=k[x]/(x^{3}) \}, \{{}_{k} S_{S} \})$, visualized as
$$
\xymatrix{
k  \ar@/^1pc/[r]^{k \otimes_{k} S} & S. 
}
$$
To compute the preprojective algebra one doubles the quiver and doubles the decoration by adding the bimodule $\Hom_{k}( {}_{k} S_{S}, k) \cong {}_{S} S_{k}$.

We will begin by describing an explicit basis for the algebra $\Pi(G_{2}) := \Pi(Q, D_{Q})$. Since a degeneration cannot have smaller dimension, we have
$$
\dim \Pi(G_{2}) \geq \dim \Pi(D_{4}) = 28
$$
So we will produce a spanning set with 28 elements, which is automatically linearly independent by the above discussion on degenerations. 

\begin{rem}
To distinguish between the length zero paths $1 \in k$ and $1 \in S$ as well as the length one paths $1 \in {}_{S} S_{k}$ and $1 \in {}_{k} S_{S}$ we use subscripts: ${}_{k}1_{k}, {}_{S}1_{k}, {}_{k}1_{S}, {}_{S} 1_{S}$.  Often, the path $\gamma$ will be written $_{s(\gamma)} \gamma_{t(\gamma)}$, to delineate its location.
\end{rem}

\begin{prop} \label{Basis G_2}
The following set is a basis of the algebra 
$$
\Pi(G_{2})=
 T_{S \oplus k}({}_{k} S_{S} \oplus {}_{S} S_{k})/ \langle {}_{k} 1 \otimes_{S} 1_{k}  + 1 \otimes_{k} x^2 + x \otimes_{k} x + x^{2} \otimes_{k} 1 \rangle 
$$ 
as a vector space  over $k$, presented by length with $\alpha:={}_{k} 1 \otimes_{S} x_{k}$, $\beta := {}_{k} 1 \otimes_{S} x^{2}_{k}$ and $\cB := \{ 1, x, x^{2} \} \subset S$ a basis for $S$ over $k$: 
$$
\begin{array}{ll}
\text{Length 0: } \cB, {}_{k} 1_{k} &
\text{Length 3: } \alpha \otimes_{k} 1 \cdot \cB, \ \cB \cdot 1 \otimes_{k} \alpha  \\
\text{Length 1: } \cB \cdot {}_{S} 1_{k}, {}_{k} 1_{S} \cdot \cB  & 
\text{Length 4: } \alpha \otimes_{k} \beta, \ 1 \otimes_{k} \alpha \otimes_{k} 1 \cdot \cB \\  
\text{Length 2: }  \alpha, \ \beta , \ 1 \otimes_{k} 1 \cdot \cB, \ x \otimes_{k} 1 \cdot \cB.  &
\end{array}
$$
\end{prop}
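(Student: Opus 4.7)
The plan combines a dimension lower bound from degeneration with an explicit spanning argument of size exactly $28$. I will first observe that $(A_{2}, D_{Q})$ arises from $D_{4}$ by folding along the $S_{3}$-action permuting the three leaves (giving $A_{2}$ with decoration $k \to k^{\oplus 3}$) and then Frobenius-degenerating $k^{\oplus 3}$ to $S' = k[x]/(x^{3})$. By Proposition \ref{inequality},
$$\dim_{k} \Pi(G_{2}) \;\geq\; \dim_{k} \Pi(D_{4}) \;=\; 28.$$
Since the proposed set has exactly $28$ elements, it will suffice to verify that it spans; linear independence is then forced.

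For spanning, I encode the two defining relations as $\text{R1}: ab = 0$ (at vertex $k$) and $\text{R2}: bax^{2}+xbax+x^{2}ba = 0$ (at vertex $S$), where $a := \ _{k}1_{S}$ and $b := \ _{S}1_{k}$. Every path in the doubled, decorated path algebra is an alternating word in $a, b$ with $x$-scalars inserted. R1 annihilates any path containing $ab$ as a consecutive subword, and R2 can be solved for any one of its three terms, depending on context. Multiplying R2 on the left by $a$ and applying R1 yields $ax^{2}ba = -axbax$; multiplying on the right by $b$ and applying R1 yields $bax^{2}b = -xbaxb$; composing these gives the crucial identity $(axb)^{2} = axbaxb = 0$. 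These derived identities, together with R1, are exactly what is needed to rewrite every path as a $k$-linear combination of the listed elements length-by-length; the vanishing of $(axb)^{2}$ moreover ensures that $\Pi(G_{2})$ is concentrated in lengths $\leq 4$.

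The main obstacle will be the case analysis at lengths $3$ and $4$. While each of the derived identities above is immediate, one must enumerate the a priori paths at these lengths, organized by source/target vertex and by internal $x$-exponents, and verify that each reduces to a combination of the $28$ listed elements with no residual terms. This bookkeeping is routine but unavoidable; once complete, the dimension lower and upper bounds match and the listed set is a basis.
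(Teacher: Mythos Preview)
Your proposal is correct and takes essentially the same route as the paper: use the degeneration from $\Pi(D_{4})$ to get $\dim \Pi(G_{2}) \geq 28$, then show the $28$ listed elements span by a length-by-length reduction using the two relations. Your translation into words in $a,b,x$ and the derived identities $ax^{2}ba = -axbax$, $bax^{2}b = -xbaxb$, $(axb)^{2}=0$ are exactly the paper's manipulations with $Rel$ and $1\otimes_{S}1$ in lighter notation. Two small phrasing points worth tightening: at length~$2$ in $e_{S}\Pi e_{S}$ you must invoke $R2$ (and $xR2$, $x^{2}R2$) directly, not just ``the derived identities together with R1''; and the vanishing in length~$\geq 5$ uses the other derived identities to feed into $(axb)^{2}=0$ (e.g.\ $ax^{2}bax^{2}ba$ first needs $ax^{2}ba=-axbax$), so it is not that identity alone that forces concentration in length~$\leq 4$.
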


\begin{cor}
${}_{S}1_{S} \cdot \Pi(G_{2})$ is a free left $S$-module and $\Pi(G_{2}) \cdot {}_{S}1_{S}$ is a free right $S$-module
\end{cor}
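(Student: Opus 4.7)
The plan is to prove both freeness statements by exhibiting an explicit $S$-generating set in each case and invoking a dimension count from Proposition~\ref{Basis G_2}. A length-by-length tally gives $\dim_k {}_S 1_S \cdot \Pi(G_2) = \dim_k \Pi(G_2) \cdot {}_S 1_S = 3 + 3 + 6 + 3 + 3 = 18 = 6 \cdot \dim_k S$, so each side will be free of rank $6$ once the right number of $S$-generators is produced.

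For $\Pi(G_2) \cdot {}_{S}1_{S}$ as a right $S$-module the content is essentially visible in Proposition~\ref{Basis G_2}: every basis element ending at vertex $S$ already appears as $p \cdot b$ with $b \in \cB$ and $p$ drawn from the six-element list
\[
 \{\, {}_{S}1_{S},\ {}_{k}1_{S},\ 1 \otimes_{k} 1,\ x \otimes_{k} 1,\ \alpha \otimes_{k} 1,\ 1 \otimes_{k} \alpha \otimes_{k} 1 \,\}.
\]
These six $p_i$ right-$S$-generate $\Pi(G_2) \cdot {}_S 1_S$ and are right-$S$-linearly independent, since $\{p_i \cdot b\}$ is already a $k$-basis.

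For ${}_{S}1_{S} \cdot \Pi(G_2)$ as a left $S$-module I would propose the parallel generators
\[
 \{\, {}_{S}1_{S},\ {}_{S}1_{k},\ 1 \otimes_{k} 1,\ 1 \otimes_{k} x,\ 1 \otimes_{k} \alpha,\ 1 \otimes_{k} \alpha \otimes_{k} 1 \,\}
\]
and verify length-by-length that the $\cB$-span under the \emph{left} $S$-action recovers every basis element of ${}_S 1_S \cdot \Pi(G_2)$. Lengths $0$, $1$, and $3$ are transparent from Proposition~\ref{Basis G_2}. At length $2$, the proposition lists the basis ``on the right'', so one must compare $\{(1 \otimes_k 1) \cdot b,\ (x \otimes_k 1) \cdot b\}$ with the proposed $\{b \cdot (1 \otimes_k 1),\ b \cdot (1 \otimes_k x)\}$. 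Using the relation $1 \otimes_k x^2 + x \otimes_k x + x^2 \otimes_k 1 = 0$ together with its right-$x$ multiple $x \otimes_k x^2 + x^2 \otimes_k x = 0$, each element of the proposed set expands in the listed basis via an explicit, manifestly invertible $6 \times 6$ change-of-basis matrix.

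The principal obstacle lies at length $4$, where one must establish the commutation
\[
 x \otimes_k \alpha \otimes_k 1 \;=\; 1 \otimes_k \alpha \otimes_k x,
\]
so that the left and right $S$-orbits of $1 \otimes_k \alpha \otimes_k 1$ coincide. The strategy is to reduce each side to the common form $-\,1 \otimes_k \beta \otimes_k 1$ by sliding an $x$ across $\otimes_S$ via ${}_k 1 \otimes_S x_k = {}_k x \otimes_S 1_k$, substituting the relation at $e_S$ into the exposed interior length-$2$ factor $x \otimes_k x$, and eliminating the term whose subpath ${}_k 1 \otimes_S 1_k$ vanishes by the relation at $e_k$. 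The $b = x^2$ commutation then follows by iteration. Combined with the length-$2$ check, the six proposed left generators $\cB$-span ${}_S 1_S \cdot \Pi(G_2)$, and the dimension count $6 \cdot 3 = 18$ forces this spanning set to be a free basis.
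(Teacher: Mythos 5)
The proposal is correct and takes essentially the route the paper intends: the right-module freeness is read off directly from the triples ``$(\text{fixed}) \cdot \cB$'' in the basis of Proposition~\ref{Basis G_2}, and the left-module case reduces to the length-$2$ change of basis via $Rel$ and $x \cdot Rel$ together with the length-$4$ commutation $x \cdot (1 \otimes_k \alpha \otimes_k 1) = (1 \otimes_k \alpha \otimes_k 1) \cdot x$, which is exactly the identity established in the paper's proof of Proposition~\ref{Basis G_2} (``so the left action is equivalent to a right action''). You verify both sides collapse to $-1 \otimes_k \beta \otimes_k 1$, which matches the paper's argument, and the final dimension count $6 \cdot 3 = 18$ correctly promotes spanning to freeness.
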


\begin{cor}
The Hilbert--Poincar\'{e} series for $\Pi(G_{2})$ is 
$$
h_{\Pi(G_{2})}(t) = 4 + 6t + 8t^2 + 6t^3 + 4t^4
$$ 
where the coefficient of $t^{i}$ is the dimension $\Pi^{i}(G_{2})$, the subspace spanned by length $i$ paths. Using the bigrading $(t, s)$ by path length and $x$-degree, one obtains the matrix-valued Hilbert--Poincar\'{e} series
$$
(1+t^2s)\left (
\begin{array}{cc}
1+t^2s^2 & (1+s+s^2)t \\
(1+s+s^2)t & (1+s+s^2)(1+t^2) \\
\end{array}
\right )
$$
where enumerating the vertices the $(i,j)$th entry is the Hilbert--Poincar\'{e} series $h_{e_{i} \Pi(G_{2}) e_{j}}(t, s)$.
\end{cor}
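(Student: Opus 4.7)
The strategy has three parts: first, identify the local relations at each vertex; second, show by length-by-length reduction that the listed 28 elements span $\Pi(G_2)$; third, invoke Proposition~\ref{inequality} to promote the spanning set to a basis.

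Multiplying $r$ by the two primitive idempotents yields the local components $e_k r e_k = {}_k 1 \otimes_S 1_k$ and $e_S r e_S = r_S := 1 \otimes_k x^2 + x \otimes_k x + x^2 \otimes_k 1$, each of which generates the ideal at its vertex. At the $k$-vertex, the tensor relation over $S$ reduces any length-two element ${}_k s \otimes_S s'_k$ to ${}_k 1 \otimes_S (ss')_k$, whose image modulo ${}_k 1 \otimes_S 1_k$ is determined by the non-unit part of $ss' \in S$, yielding the basis $\{\alpha, \beta\}$ in length~2 at this vertex. At the $S$-vertex, the $S$-bimodule generated by $r_S$ has a three-dimensional basis $\{r_S, x \cdot r_S, x^2 \cdot r_S\}$ with distinct leading monomials $x^2 \otimes_k 1,\ x^2 \otimes_k x,\ x^2 \otimes_k x^2$ (under any reasonable ordering on the tensor slots), so the six monomials $x^i \otimes_k x^j$ with $i \in \{0,1\}$ span $e_S \Pi(G_2) e_S$ in length~2.

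Longer paths are handled by sandwiching $r_S$ with the unit sections of $M^\alpha$ and $M^{\alpha^*}$. The products $x^i \cdot r_S$, reduced via ${}_k 1 \otimes_S 1_k = 0$, produce the length-three identities $\beta \otimes_k 1 = \alpha \otimes_k x$, $\beta \otimes_k x = \alpha \otimes_k x^2$, and $\beta \otimes_k x^2 = 0$ in $e_k \Pi(G_2) e_S$; the mirror computation $r_S \cdot x^i$ produces analogous reductions in $e_S \Pi(G_2) e_k$. At length four, the sandwich ${}_k 1 \cdot r_S \cdot 1_k$ unfolds, after applying the length-two reduction at the $k$-vertex, to $\alpha \otimes_k \alpha = 0$; combined with the length-three identities this forces $\beta \otimes_k \alpha = \alpha \otimes_k \beta$ and $\beta \otimes_k \beta = 0$, leaving $\alpha \otimes_k \beta$ as the only surviving element of $e_k \Pi(G_2) e_k$ in length~4. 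A parallel argument shows that every $x^i \otimes_k \alpha \otimes_k x^j$ with $i \geq 1$ rewrites in terms of the three elements $1 \otimes_k \alpha \otimes_k x^j$, which therefore span the length-four piece at the $S$-vertex.

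It remains to show that length $\geq 5$ vanishes. Any length-five path from $k$ to $S$ reduces at its first two letters to a combination of terms $\alpha \otimes_k (\text{length-3})$ and $\beta \otimes_k (\text{length-3})$; the length-three identity $\beta \otimes_k x^j = \alpha \otimes_k x^{j+1}$ combined with $\alpha \otimes_k \alpha = 0$ collapses the entire expression to zero, and the remaining three source--target pairs follow by symmetric arguments. Having thus spanned $\Pi(G_2)$ by 28 elements, linear independence comes for free: since $(Q, D_Q)$ arises from $D_4$ by folding and maximal degeneration, Proposition~\ref{inequality} gives $\dim_k \Pi(G_2) \geq \dim_k \Pi(D_4) = 28$. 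The main technical nuisance will be the bookkeeping forced by the mixed $\otimes_k$ and $\otimes_S$ slots, which blocks any uniform monomial-ordering shortcut; fortunately the small size of the algebra keeps the casework finite and entirely direct.
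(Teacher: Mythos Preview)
Your proposal is correct and follows essentially the same approach as the paper. Both arguments establish the corollary by proving the underlying basis proposition: a length-by-length reduction shows that the 28 listed elements span, and the degeneration inequality $\dim \Pi(G_2) \geq \dim \Pi(D_4) = 28$ upgrades spanning to linear independence. Your exposition packages the reductions as explicit rewrite rules (e.g.\ $\beta \otimes_k x^j \equiv \pm\alpha \otimes_k x^{j+1}$, $\alpha \otimes_k \alpha = 0$) rather than the paper's phrasing in terms of identifying which multiples of $r_S$ and $1\otimes_S 1$ appear in each graded piece, but the computations and the logical structure are the same.
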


Define $M := S$ as a $(k, S)$-bimodule and $N :=S$ as a $(S, k)$-bimodule. Before presenting a proof, we make a few useful observations. Notice that in length 2,
$$
T^{2}_{S \oplus k}(M \oplus N)
=(M \oplus N) \otimes_{S \oplus k} (M \oplus N)
= M \otimes_{S} N \oplus N \otimes_{k} M
$$
and similarly for higher length terms: the two non-vanishing tensor products alternate in $M$ and $N$.  

Moreover, in odd length the two non-zero tensor products are isomorphic via a map reading each tensor product from right to left. In even length $2m$, the term starting with $M$ is a vector space generated by applying $1 \otimes_{S} -$ to paths of length $2m-1$. The term starting with $N$ is an $S$-bimodule generated as left $S$-module by applying $1 \otimes_{k} -$ to paths of length $2m-1$ and hence generated as a vector space by left multiplication by ${}_{S} 1_{k}, {}_{S} x_{k},$ and ${}_{S} x^2_{k}$.

Finally, notice that the tensor algebra $T_{S \oplus k}(M \oplus N)$ has a decomposition
\begin{align*}
T_{S \oplus k}(M \oplus N) &= (1_{k} + 1_{S})T_{S \oplus k}(M \oplus N) (1_{k} + 1_{S}) \\
& = {}_{k} T_{S \oplus k}(M \oplus N)_{k} \oplus {}_{k} T_{S \oplus k}(M \oplus N)_{S} \\
& \hspace{1cm} \oplus {}_{S} T_{S \oplus k}(M \oplus N)_{k} \oplus {}_{S} T_{S \oplus k}(M \oplus N)_{S}
\end{align*}
and an $\bN$-grading given by the number of times $x$ appears in a monomial expression. Since the relations $1 \otimes_{S} 1$ and $Rel:= 1 \otimes_{k} x^{2} + x \otimes_{k} x + x^{2} \otimes_{k} 1$ are homogeneous with respect to this grading and respect the above decomposition, the quotient $\Pi(G_{2})$ inherits both. We conclude that all relations are generated by relations involving only paths with the same starting and ending vertex of the same length and $x$-degree. We write ${}_{A_{i}}\Pi^{d}(G_{2})_{A_{j}}$ for the subspace of linear combinations of length $d$ paths starting at vertex $i$ and ending at vertex $j$.  


\begin{proof}[Proof of Proposition \ref{Basis G_2}]

Once we show that the given set spans, then  linear independence will follow from Proposition \ref{inequality} together with the fact that $\Pi(D_{4})$ is 28-dimensional. So we will only need to show the spanning property. 

For lengths 0 and 1, it is clear that these elements span. For higher lengths we now explain how any tensor product of lower length basis elements can be realized in the span of our given basis.

In length 2, $M \otimes_{S} N \cong S$ is three dimensional generated by $1 \otimes_{S} 1, \alpha,$ and $\beta$. Since $1 \otimes_{S} 1$ is a relation in $\Pi(G_{2})$, it follows that $\alpha$ and $\beta$ generate ${}_{k}\Pi^{2}(G_{2})_{k}$. $N \otimes_{k} M$ is generated by $1 \otimes_{k} 1$ as an $S$-bimodule and hence is 9-dimensional as a vector space, generated by the six elements $1 \otimes_{k} 1 \cdot \cB$ and $x \otimes_{k} 1 \cdot \cB$ together with the elements
\begin{align*}
&Rel := 1 \otimes_{k} x^{2} + x \otimes_{k} x + x^{2} \otimes_{k} 1 \\
 &x \cdot Rel = x \otimes_{k} x^{2} + x^{2} \otimes_{k} x \\
 &x^{2} \cdot Rel = x^{2} \otimes_{k} x^{2} 
\end{align*}
all relations in $\Pi(G_{2})$.

In length 3, both direct summands have the same dimension, and applying $- \otimes_{k} M$ to the 2-dimensional vector space $\Span \{ \alpha, \beta \} = M \otimes_{S} N/ \langle 1 \otimes_{S} 1 \rangle $ yields the spanning set,
$$
\alpha \otimes_{k} \cB  \cup \beta \otimes_{k} \cB. 
$$
For each $n \in \{2, 3, 4 \}$, if we sum all of the above elements of $x$-degree equal to $n$, we obtain the relation $x^{n-2} \otimes_{S} Rel$.  Hence the smaller set $\alpha \otimes_{k} \cB$ spans ${}_{k}\Pi^{3}(G_{2})_{S}$, as desired.

In length 4, first notice that the quotient ${}_{k}\Pi^{4}(G_{2})_{k}$ of $M \otimes_{S} N \otimes_{k} M \otimes_{S} N$ has a spanning set
$$
\alpha \otimes_{k} \cB \otimes_{S} 1,
$$
the image of the spanning set $\alpha \otimes_{k} \cB$ under the surjective map $-\otimes_{S} 1$. The term $\alpha \otimes_{k} 1 \otimes_{S} 1$ is zero in the quotient, as is the term
$$
\alpha \otimes x \otimes_{S} 1 \equiv 1 \otimes_{S} Rel \otimes_{S} 1 \text{ mod } 1 \otimes_{S} 1.
$$
Next the quotient ${}_{S}\Pi^{4}(G_{2})_{k}$ of $N \otimes_{k} M \otimes_{S} N \otimes_{k} M$ is generated as a left $S$-module by
$$
1 \otimes_{k} 1 \otimes_{S} x \otimes_{k} \cB.
$$
But, this set also generates as a vector space since,
$$
x \cdot (1 \otimes_{k} 1 \otimes_{S} x \otimes_{k} 1) - (1 \otimes_{k} 1 \otimes_{S} x \otimes_{k} 1) \cdot x \equiv 1 \otimes_{k} 1 \otimes_{S} Rel - Rel \otimes_{S} 1 \otimes_{k} 1
$$
are congruent modulo $1 \otimes_{S} 1$, 
so the left action is equivalent to a right action, which produces no new terms as $S \otimes_{S} S \cong S$. 

To show $\Pi^{\geq 5}(G_{2})= 0$, it suffices to show that $\Pi^{5}(G_{2}) = 0$, and by the isomorphism between the direct summands, it suffices to show that ${}_{k}\Pi^{5}(G_{2})_{S}$ is zero. As a right $S$-module ${}_{k}\Pi^{5}(G_{2})_{S}$ is generated by the element,
$$
1 \otimes_{S} x \otimes_{k} 1 \otimes_{S} x^{2}  \otimes_{k} 1,
$$
which is the relation $1 \otimes_{S} x \otimes_{k} 1 \otimes_{S} Rel + 1 \otimes_{S} Rel \otimes_{S} 1 \otimes_{k} x$ modulo $1 \otimes_{S} 1$, and hence zero in $\Pi(G_{2})$.
We conclude that our given set spans and hence forms a basis for $\Pi(G_{2})$.
\end{proof}

\begin{rem}
Here is an alternative proof of linear independence that does not appeal to degeneration arguments, but rather examines the relations carefully. The technical computations are left to the reader.

Notice any two basis elements are in different bigradings or start at different vertices and hence are linearly independent with the following exceptions
$$
x \otimes_{k} 1 , \ 1 \otimes_{k} x
\hspace{1cm} \text{and} \hspace{1cm}
x \otimes_{k} x, \ 1 \otimes_{k} x^{2} 
$$
which can be seen to be linearly independent directly from the definition. To see that all listed paths are non-zero observe each is a subpath of $1 \otimes_{S} x \otimes_{k} 1 \otimes_{S} x^2$ or $1 \otimes_{k} 1 \otimes_{S} x \otimes_{k} x^2$, which can be shown non-zero by writing out the length four elements in the two-sided idea generated by the relations. 
\end{rem}

Next notice that $\Pi(G_{2})$ is Frobenius with the form given by projecting to the one-dimensional space, spanned by the element,
$$
1 \otimes_{S} x \otimes_{k} 1 \otimes_{S} x^{2} +  1 \otimes_{k} 1 \otimes_{S} x \otimes_{k} x^{2} 
$$
summing over both cycles of maximal bigrading (4, 3). Non-degeneracy of this form amounts to the fact that every path is a subpath of a maximal length cycle, which can be checked by hand. This Frobenius form deforms to the usual one on $\Pi(D_{4})$.

\begin{cor}
$\Pi(G_{2})$ is a flat Frobenius degeneration of $\Pi(D_{4})$.
\end{cor} 

Restricting to the above basis, the Frobenius pairing, given by first multiplying and then applying the Frobenius form, takes values in $\{ -1, 0, 1 \}$ with the $-1$ values given by the pairs:
$$
\begin{array}{lll} 
(1 \otimes_{S} x, \ 1 \otimes_{S} x^{2}) & 
(1 \otimes_{k} x, \ 1 \otimes_{k} x^{2}) & 
(x \otimes_{k} 1, \ x \otimes_{k} x)  \\ 
({}_{S} x_{k} , 1 \otimes_{S} x \otimes_{k} x) & 
({}_{k} x^{2}_{S}, {}_{S} 1 \otimes_{k} 1 \otimes_{S} x) & 
({}_{k} x_{S}, {}_{S} x \otimes_{k} 1 \otimes_{S} x) \\ 
({}_{k} 1_{S}, {}_{S} x^{2} \otimes_{k} 1 \otimes_{S} x) & &   \\
\end{array}
$$
The Nakayama automorphism is given by $a \mapsto -a$ on the elements appearing in the above pairings and is the identity on all other basis elements, and hence squares to the identity. \\
\\
Using computations from the proof of Proposition \ref{Basis G_2}, it is not hard to compute the center, $Z(\Pi(G_{2}))$. 

\begin{prop}
$Z(\Pi(G_{2})) = k \cdot 1 \oplus \Pi^{4}(G_{2}) \cong k[0] \oplus (k \oplus S)[4]$.
\end{prop}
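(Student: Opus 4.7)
The plan is to exploit the bigrading of $\Pi(G_2)$ (by path length and $x$-degree) and the homogeneity of the defining relations, so that $Z := Z(\Pi(G_2))$ is a bigraded subalgebra. Combined with the orthogonal idempotent decomposition $1 = e_k + e_S$, the central condition $e_k z = z e_k$ forces the off-diagonal components $e_k z e_S$ and $e_S z e_k$ of any central $z$ to vanish separately, since they live in distinct direct summands of $\Pi(G_2)$.

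I would begin with the easy pieces. Since the underlying quiver is bipartite, every length-$1$ and length-$3$ path connects distinct vertices, so $\Pi^1(G_2)$ and $\Pi^3(G_2)$ are entirely off-diagonal and $Z \cap \Pi^1 = Z \cap \Pi^3 = 0$. In degree zero, writing $z = a \cdot e_k + b \cdot e_S$ with $b \in S$ and imposing $z \cdot \ _k 1_S = \ _k 1_S \cdot z$ forces $b = a \cdot 1_S$, so $Z \cap \Pi^0 = k \cdot 1$. For $\Pi^4$, the observation $\Pi^{\geq 5}(G_2) = 0$ reduces centrality to commutation with $\Pi^0$. The component $e_k \Pi^4 e_k = k \cdot (\alpha \otimes_k \beta)$ commutes automatically with $x \in S$ (which lives at the other vertex), and for $w = 1 \otimes_k 1 \otimes_S x \otimes_k b \in e_S \Pi^4 e_S$ with $b \in \cB$, I would reuse the calculation highlighted in the proof of Proposition \ref{Basis G_2}: modulo the ideal generated by $_k 1 \otimes_S 1_k$, we have $xw - wx \equiv 1 \otimes_k 1 \otimes_S Rel - Rel \otimes_S 1 \otimes_k 1$, which lies in the defining ideal and hence vanishes in $\Pi(G_2)$. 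This proves $\Pi^4 \subseteq Z$.

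The main computation is the degree-two case. The candidate $z = c_1 \alpha + c_2 \beta + w$ with $w$ in the six-dimensional $e_S \Pi^2 e_S$ is constrained first by $xw = wx$ for the generator $x \in S$ at vertex $S$ (using the reductions $x^2 \otimes_k 1 = -(x \otimes_k x + 1 \otimes_k x^2)$ and $x \otimes_k x^2 = -x^2 \otimes_k x$ coming from $Rel$ together with $x^3 = 0$), and then by commutation with the two arrows $_k 1_S$ and $_S 1_k$. The resulting linear system is slightly underdetermined by the $xw = wx$ equations alone, but the arrow-commutation equations compensate and force every coefficient to vanish in all characteristics, showing $Z \cap \Pi^2 = 0$.

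Assembling the pieces gives $Z = k \cdot 1 \oplus \Pi^4$ as a graded vector space. Since $\Pi^4 \cdot \Pi^4 \subseteq \Pi^8 = 0$, the positive-degree part is a square-zero ideal, so as an algebra $Z \cong k \oplus \Pi^4$. The idempotent decomposition and the given basis identify $e_k \Pi^4 e_k \cong k$ (spanned by $\alpha \otimes_k \beta$) and $e_S \Pi^4 e_S \cong S$ as a vector space, the latter being a free rank-one right $S$-module generated by $1 \otimes_k \alpha \otimes_k 1$ on which the left and right $S$-actions agree by centrality. This yields the claimed isomorphism $Z(\Pi(G_2)) \cong k[0] \oplus (k \oplus S)[4]$. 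The main technical obstacle is the degree-two calculation, which requires tracking both families of relations simultaneously and balancing them against the arrow-commutation constraints.
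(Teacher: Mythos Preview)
Your proposal is correct and follows essentially the same approach as the paper: reduce by bigrading and idempotent decomposition, use the vanishing of $\Pi^{\geq 5}$ together with the length-4 commutator computation from Proposition \ref{Basis G_2} to show $\Pi^4 \subset Z$, and then eliminate degrees $0,1,2,3$ by commuting with length-zero and length-one generators. The only cosmetic difference is in degree two: you impose commutation with $x \in S$ first and then with the arrows $_k 1_S$, $_S 1_k$, whereas the paper instead commutes with $_k 1_S$ (cutting down to $\operatorname{Span}\{1 \otimes_k \gamma\}_{\gamma \in \cB}$) and then with $_k x_S$ to kill that remaining span; since $_k x_S = \,_k 1_S \cdot x$, these are equivalent checks.
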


\begin{proof}
The Frobenius structure allows one to identify $\Pi^{i}(G_{2})$ with $\Pi^{4-i}(G_{2})^{*}$ for $i \in \{ 0, 1, 2, 3, 4 \}$. In particular, the Frobenius pairing identifies $\Pi^{4}(G_{2})$ with $S \oplus k$, explaining the second isomorphism. 

$\Pi^{4}(G_{2}) \subset Z(\Pi(G_{2}))$ as one only needs to check that the commutator $[ {}_{S} x_{S}, 1 \otimes_{k} 1 \otimes_{S} x \otimes_{k} 1 ]$ vanishes, which is done in the proof of Proposition \ref{Basis G_2} above. Conversely, $Z(\Pi(G_{2}) \subset k[0] \oplus k[4] \oplus S[4]$ as the centralizer of ${}_{k} 1_{k}$ excludes all paths of odd length, the centralizer of ${}_{k} 1_{S}$ excludes all paths of even length less than 4 except the the span of the identity and the span of $\{ 1 \otimes_{k} \gamma \}_{\gamma \in \{ 1, x, x^2 \}}$. The latter span is not in the centralizer of ${}_{k} x_{S}$.
\end{proof}


\subsection{Case: $\Pi(F_{4})$}
Now we consider the example $Q= A_{4}$ with decoration \\
 $D_{Q}=(\{k, k, S, S\}, \{k, k \otimes_{k} S, S\})$. The doubled decorated quiver is,
$$
\xymatrix{
k \ar@/^1pc/[r]^{k} & k \ar@/^1pc/[l]^{k} \ar@/^1pc/[r]^{k \otimes_{k} S} & S \ar@/^1pc/[l]^{S \otimes_{k} k} \ar@/^1pc/[r]^{S} & S \ar@/^1pc/[l]^{S}
}
$$
We want to compute a basis for the decorated preprojective algebra \\
$\Pi(F_{4}) := \Pi( Q, D_{Q})$. The length zero paths
$$
e_{1} := {}_{1} 1_{1},  \ \
e_{2} := {}_{2} 1_{2}, \ \  
e_{3} := {}_{3}1_{3}, \ \ 
e_{4} := {}_{4} 1_{4},  \ \
{}_{3} x_{3}, \ \
 {}_{4} x_{4}
$$
and length one paths
$$
{}_{1} 1_{2}, \ \  {}_{2} 1_{1}, \ \ {}_{2} 1_{3}, \ \ {}_{3} 1_{2}, \ \ {}_{3} 1_{4}, \ \ {}_{4} 1_{3}
$$
form a generating set. The quadratic relation $R$ can be multiplied by the length zero path $e_{i}$ to get a relation $Ri$ at each vertex $i$, given by,

$$
\begin{cases}
R1:= {}_{1} 1_{2} \otimes_{k} {}_{2}1_{1} & \text{ at vertex } 1 \\
R2:= {}_{2} 1_{1} \otimes_{k} {}_{2} 1_{1} + {}_{2} 1_{3} \otimes_{S} {}_{3} 1_{2} & \text{ at vertex } 2 \\
R3:= {}_{3} 1_{2} \otimes_{k} {}_{2} x_{3} + {}_{3} x_{2} \otimes_{k} {}_{2} 1_{3} + {}_{3} 1_{4} \otimes_{S} {}_{4} 1_{3} & \text{ at vertex } 3 \\
R4:= {}_{4} 1_{3} \otimes_{S} {}_{3} 1_{4} & \text{ at vertex } 4. \\ 
\end{cases}
$$
Appealing directly to the definition, the relations at vertices 1, 2, and 4 are the usual commutators, ignoring signs in light of Remark \ref{signs}. At vertex 3, one needs to use the Frobenius form $\lambda: S=k[x]/(x^{2}) \rightarrow k$ by $\lambda(a + bx) := b$ to get the dual basis $\{ x, 1 \}$ to the basis $\{1, x\}$, and hence the canonical element ${}_{3}1_{2} \otimes_{k} {}_{2}x_{3} + {}_{3}x_{2} \otimes_{k} {}_{2}1_{3}$.

\begin{rem}  
Consider ${}_{3} 1_{4}$ and ${}_{4} 1_{3}$ to have $x$-grading $1/2$, so that the relation at vertex 3 is homogeneous in the $x$-grading. We make this choice so the $x$-grading is symmetric, but if one would like to realize this algebra as a filtered degeneration of $\Pi(E_{6})$ then one needs to choose integral $x$-gradings, such as $|{}_{3} 1_{4} |_{x} = 1$ and $|{}_{4} 1_{3} |_{x} = 0$.
\end{rem}

Here we explain the methodology to compute a basis for this algebra, leaving the details to the reader. Recall it suffices to provide a spanning set with exactly dim($\Pi(E_{6})$) = 156 elements. 

\begin{rem} \label{rem:notationforpaths}
To simplify notation for longer paths we define
$$
\gamma_{a,b} :=
\begin{cases}
{}_{a} 1_{a+1} \cdot {}_{a+1} 1_{a+2} \cdots  {}_{b-1} 1_{b} & \text{ if } a < b\\
{}_{a} 1_{a} & \text{ if } a = b \\
{}_{a} 1_{a-1} \cdot {}_{a-1} 1_{a-2} \cdots {}_{b+1} 1_{b} & \text{ if } a> b\\
\end{cases}
$$
using an ordering on the vertices $Q_{0}$.
Then define 
$$
\gamma_{i_{1}, i_{2}, \dots, i_{n} } := \gamma_{i_{1}, i_{2}} \gamma_{i_{2}, i_{3}} \cdots \gamma_{i_{n-1}, i_{n}}
$$
where if $i_{j} < i_{j+1} < i_{j+2}$ or $i_{j} > i_{j+1} > i_{j+2}$ then we remove $i_{j+1}$ from the notation in order to solely delineate changes in direction.
\end{rem}

Observe that every path is equivalent to one passing through vertex 3 except those belonging to $X := \{ {}_{1}1_{1}, {}_{2}1_{2}, {}_{4}1_{4}, {}_{4} x_{4}, {}_{1} 1_{2}, {}_{2} 1_{1} \}$. Exploiting this observation, for any pair of vertices $i$ and $j$ the map,
$$
\Phi_{i, j}: e_{3} \Pi(F_{4}) e_{3} \mapsto e_{i} \Pi(F_{4}) e_{j} / k \cdot e_{i}Xe_{j} \hspace{1cm} \alpha \mapsto  \gamma_{i,3} \alpha \gamma_{3,j}
$$ 
is surjective, where $k \cdot e_{i}Xe_{j}$ denotes the $k$-span of paths in $X$ from $i$ to $j$. Therefore, 
$$
e_{i} \Pi(F_{4}) e_{j} \cong \Phi_{i,j}( e_{3} \Pi(F_{4}) e_{3}) \oplus k \cdot e_{i}X e_{j}
$$
as $k$-vector spaces and we have the equality
\begin{align*}
\dim(\Pi(F_{4})) 
&= \sum_{i,j} \dim( e_{i} \Pi(F_{4}) e_{j}) \\
&= \dim k \cdot X + \sum_{i,j} \dim (\Phi_{i,j}(e_{3} \Pi(F_{4}) e_{3})) \\
&=  \dim k \cdot X + \sum_{i,j} \dim ( e_{3} \Pi(F_{4}) e_{3}) - \dim( \ker(\Phi_{i,j}) ) \\
&= 6 + 16 \dim ( e_{3} \Pi(F_{4}) e_{3})
- \sum_{i,j}  \dim( \ker(\Phi_{i,j}) ).
\end{align*}

Since $\dim( \ker(\Phi_{j,i}) ) = \dim( \ker(\Phi_{i,j}))$, we've reduced showing the inequality $\dim(\Pi(F_{4})) \leq 156$ to computing a spanning set for $e_{3} \Pi(F_{4}) e_{3}$ and linearly independent subsets for ten subspaces, $\ker( \Phi_{i, j} )$ for $i \leq j$. \\
\\
We compute a spanning set for $e_{3} \Pi(F_{4}) e_{3}$ by building a surjection $q: k \langle \alpha, \beta \rangle \twoheadrightarrow e_{3} \Pi(F_{4}) e_{3}$, and then reducing the spanning set by identifying elements in the kernel of $q$. Every path in $e_{3} \Pi(F_{4}) e_{3}$ is equivalent to one avoiding vertex 1 and vertex 4, using the relations at vertex 2 and 3, respectively. Therefore, $e_{3} \Pi(F_{4}) e_{3}$ is generated by $q(\alpha) :={}_{3} 1_{2} \otimes_{k} {}_{2} 1_{3}$ and $q(\beta) := {}_{3} x_{3}$. They satisfy the relations: 
\begin{align*}
q(\alpha)^{3} &= {}_{3} 1_{2} \otimes_{k} {}_{2} 1_{3} \otimes_{S}  
 {}_{3} 1_{2} \otimes_{k} {}_{2} 1_{3} \otimes_{S} {}_{3} 1_{2} \otimes_{k} {}_{2} 1_{3} \\
 & \overset{R2}{=} 
{}_{3} 1_{2} \otimes_{k} {}_{2} 1_{1} \otimes_{k} {}_{1} 1_{2} \otimes_{k} {}_{2} 1_{1} \otimes_{k} {}_{1} 1_{2} \otimes_{k} {}_{2} 1_{3}  \\
& \overset{R1}{=} 0, 
\end{align*}
$q(\alpha \beta + \beta \alpha)^{2}  \overset{R3}{=} ({}_{3} 1_{4} \otimes {}_{4} 1_{3})^{2} \overset{R4}{=} 0$,
and $q(\beta^{2}) = x^{2} = 0$.
Hence $q$ factors through the quotient, 
$$
q': \frac{ k \langle \alpha, \beta \rangle }
{(\alpha^{3}, \beta^{2}, (\alpha \beta + \beta \alpha)^{2} )} \twoheadrightarrow e_{3} \Pi(F_{4}) e_{3}.
$$
Let $C$ denote $k \langle \alpha, \beta \rangle /(\alpha^{3}, \beta^{2}, (\alpha \beta + \beta \alpha)^{2} )$. 
We find a $k$-basis for $C$ in Example \ref{exam:C}, after introducing the Diamond Lemma, given by:
\begin{itemize}
\item[(0)] $1,  \ \beta $
\item[(2)] $\alpha, \ \alpha \beta, \ \beta \alpha, \ \beta \alpha \beta$
\item[(4)] $\alpha^2, \ \alpha^2 \beta, \ \alpha \beta \alpha, \ \alpha \beta \alpha \beta, \ \beta \alpha^2, \ \beta \alpha^{2} \beta$
\item[(6)] $\alpha^2 \beta \alpha, \ \alpha^2 \beta \alpha \beta, \ \alpha \beta \alpha^2,  \ \alpha \beta \alpha^{2} \beta, \ \beta \alpha^{2} \beta \alpha, \ \beta \alpha^{2} \beta \alpha \beta$
\item[(8)] $\alpha^2 \beta \alpha^2, \ \alpha^{2} \beta \alpha^{2} \beta, \ \alpha^{2} \beta \alpha^{2} \beta, \ \alpha \beta \alpha^{2} \beta \alpha, \ \alpha \beta \alpha^{2} \beta \alpha \beta$
\item[(10)] $ \alpha^2 \beta \alpha^{2} \beta \alpha, \ \alpha^2 \beta \alpha^{2} \beta \alpha \beta.$
\end{itemize}
Here we have ordered the elements first by number of occurrences of $\alpha$ and then by alphabetical order. We conclude that $C$ is a free rank 12 right $S$-module and has dimension 24 as a $k$-module. This shows $q'$ is an isomorphism since,
$$
24 = \dim( (e_{3}+e_{5})\Pi(E_{6})(e_{3}+e_{5}) )
\leq \dim( e_{3} \Pi(F_{4}) e_{3} ) \leq \dim(C) = 24.
$$
The first inequality is a refinement of $\dim(\Pi(E_{6})) \leq \dim(\Pi(F_{4}))$ formed by keeping track of the decomposition of paths by starting and ending vertices. Here $E_{6}$ is folded to $F_{4}$ by identifying vertices 3 and 5 and vertices 4 and 6. In particular,
$$
\dim(\Pi(F_{4})) \leq 6 + 16(24) - \sum_{i,j} \dim( \ker(\Phi_{i,j})) = 390 - \sum_{i,j} \dim( \ker(\Phi_{i,j}))
$$
and it remains to bound the dimension of the kernel of each map $\Phi_{i,j}$ from below. 

Observe that $\gamma_{3, j} \alpha^{j} = 0 = \alpha^{j} \gamma_{j, 3}$ for all $j \in \{1, 2, 3\}$ and 
$\gamma_{3, 4} (\alpha \beta + \beta \alpha) = 0 = (\alpha \beta + \beta \alpha)\gamma_{3, 4}$. Hence we have the following containment,
\begin{align*}
\ker(\Phi_{i, j})&= \ker( \Phi_{i, 3} \circ \Phi_{3, j}) = \ker( L_{\gamma_{i, 3}} \circ R_{\gamma_{3, j}}) = \ker ( R_{\gamma_{3, j}} \circ L_{\gamma_{i, 3}}) \\
& \supset  \ker ( R_{\gamma_{3, j}}) \cup \ker( L_{\gamma_{i, 3}}) \\
& \supset \langle \alpha^{i}, (\alpha \beta + \beta \alpha)^{5-i} \rangle_{R} \cup \langle \alpha^{j}, (\alpha \beta + \beta \alpha)^{5-j}) \rangle_{L},
\end{align*}
where $\langle s \rangle_{R}$ and $\langle s \rangle_{L}$ denote the right and left ideals in $C$ generated by $s$ and $L_{s}$ and $R_{s}$ denote left and right multiplication by $s \in C$. 

For each $i, j$, one can find linearly independent elements of the above ideals such that the total number of elements is 390-156. For instance, if $i=4$ and $j=3$, then
$\ker( \Phi_{4, 3} ) \supset \langle \alpha^{i}, (\alpha \beta + \beta \alpha)^{5-i} \rangle_{R} \cup \langle \alpha^{j}, \alpha \beta + \beta \alpha)^{5-j} \rangle_{L}
= \langle \alpha \beta + \beta \alpha \rangle_{R}.$

Using a basis for $C$, it is not hard to find a basis for this ideal, given by 
\begin{align*}
\langle \alpha \beta + \beta \alpha \rangle_{R}
= \Span \{
\alpha \beta & + \beta \alpha, \
 \beta \alpha \beta, \
\alpha^{2} \beta + \alpha \beta \alpha, \
 \alpha \beta \alpha \beta, \ 
\alpha^{2} \beta \alpha, \
\alpha^{2} \beta \alpha \beta, \
\beta \alpha^{2} \beta \alpha, \\
&\alpha \beta \alpha^{2} \beta \alpha, \
\alpha \beta \alpha^{2} \beta \alpha \beta, \ \alpha \beta \alpha^{2} \beta \alpha \beta, \ \alpha^{2} \beta \alpha^{2} \beta \alpha, \ \alpha^{2} \beta \alpha^{2} \beta \alpha \beta
\}
\end{align*}
which is 12-dimensional. Hence $\Phi_{4,3}(C)$ is at most 12-dimensional. 

Note that the number of linearly independent elements needed for each $i, j$ is
$$
\dim C - \sum_{i' \mapsto i} \sum_{j' \mapsto j} \dim (e_{i'} \Pi(E_{6}) e_{j'})
- \dim( e_{i'} \Pi(E_{6} \backslash \{ 3, 5 \}))
e_{j'} )
$$
where the sum is taken over all vertices $i'$ and $j'$ that fold to $i$ and $j$, respectively. So $E_{6} \backslash \{ 3, 5 \} = A_{1} \sqcup A_{1} \sqcup A_{2}$ is the disconnected quiver obtained by discarding vertices 3 and 5 and all arrows with either 3 or 5 as source or target. If $i=4$ and $j=3$ this is precisely 
$$
24 - 2 \dim( e_{3} \Pi(E_{6}) e_{4}) - 2 \dim(e_{3} \Pi(E_{6}) e_{6}) = 12,  
$$
as desired. 

The remaining computations are done similarly and are left to the reader. The upshot is that the $4 \times 4$ matrix with $(i,j)$th entry given by $e_{i} \Pi(F_{4}) e_{j}$ is
$$
\left ( 
\begin{array}{cccc}
4 & 6 & 8 & 4 \\
6 & 12 & 16 & 8 \\
8 & 16 & 24 & 12 \\
4 & 8 & 12 & 8 \\
\end{array}
\right ).
$$
For each entry of this matrix, we compute the Hilbert--Poincar\'{e} series, a polynomial in variables $s$ and $t$ defined so the coefficient of $t^{n}s^{m}$ is dimension of $\Pi^{(n,m)}(F_{4})$, i.e. the path length $n$ and $x$-grading $m$ component of $\Pi(F_{4})$. That is,
\begin{prop}
$h_{\Pi(F_{4})}(s, t) = (1+t^4s) \cdot$  
$$
\footnotesize{
\left ( 
\arraycolsep=3pt
\medmuskip=1mu
\begin{array}{cccc}
(1+t^6s)  & t(1+t^2s+t^4s^s) & t^2(1+s)(1+t^2s) &  t^3s^{1/2}(1+s) \\
t(1+t^2s+t^4s^2) & (1+t^2)(1+t^2s+t^4s) & t(1+s)(1+t^2s) &
                                            t^2s^{1/2}(1+s)(1+t^2) \\
t^2(1+s)(1+t^2s) & t(1+s)(1+t^2s) & (1+s)(1+t^2+t^4)(1+t^2s)
                                      & ts^{1/2}(1+s)(1+t^2+t^4) \\
 t^3s^{1/2}(1+s) &  t^2s^{1/2}(1+s)(1+t^2) & ts^{1/2}(1+s)(1+t^2+t^4)  &             
                                              (1+s)(1+t^6s)\\ 
\end{array}
\right )
}
$$
and 
$$
h_{\Pi(F_{4})}(t, 1) = 6 + 10t + 14t^{2} + 18t^{3} + 20t^{4} + 20t^{5} + 20t^{6} + 18t^{7} + 14t^{8} + 10 t^{9} + 6 t^{10}
$$
where the coefficient in front of $t^{n}$ is the dimension of the vector space of paths of length $n$ in $\Pi(F_{4})$.
\end{prop}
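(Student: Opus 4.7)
The plan is to refine the dimension argument for $\Pi(F_4)$ given above to a bigraded Hilbert series computation. With the convention that ${}_3 1_4$ and ${}_4 1_3$ carry $s$-weight $1/2$, each of the relations $R1$--$R4$ and each of the connecting paths $\gamma_{i,3}$, $\gamma_{3,j}$ is bihomogeneous, so the vector space decomposition $e_i \Pi(F_4) e_j \cong \Phi_{i,j}(e_3 \Pi(F_4) e_3) \oplus e_i (k\cdot X) e_j$ that underlies the dimension count lifts verbatim to the $(t,s)$-bigraded setting. The task therefore reduces to recording bidegrees at each step.

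I would first compute the bigraded Hilbert series of $C = k\langle \alpha, \beta\rangle / (\alpha^3, \beta^2, (\alpha\beta + \beta\alpha)^2)$ by summing over its explicit basis (Example \ref{exam:C}) with $\deg(\alpha) = (2,0)$ and $\deg(\beta) = (0,1)$; this already supplies the $(3,3)$-entry of the claimed matrix, $(1+t^4s)(1+s)(1+t^2+t^4)(1+t^2s)$, via the bigraded isomorphism $q' : C \xrightarrow{\sim} e_3 \Pi(F_4) e_3$. Second, the map $\Phi_{i,j}$ shifts bidegree by $t^{|i-3|+|j-3|} s^{\delta_i + \delta_j}$, where $\delta_k = 1/2$ if $k = 4$ and $0$ otherwise. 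For each $(i,j)$ I would multiply $h_C(t,s)$ by this shift and subtract the bigraded Hilbert series of $\ker(\Phi_{i,j})$, which is determined by the left annihilator of $\gamma_{i,3}$ and the right annihilator of $\gamma_{j,3}$ in $C$ and can be read directly from the $C$-basis using $\alpha^j \gamma_{j,3} = 0$ (for $j \in \{1,2,3\}$) and $(\alpha\beta+\beta\alpha)\gamma_{4,3} = 0$. Finally, I would add the bigraded contribution from $k \cdot X$: namely $1$ to the $(1,1),(2,2)$ entries, $t$ to the $(1,2),(2,1)$ entries, and $1+s$ to the $(4,4)$ entry. Matching entry by entry yields the claimed matrix, and summing all sixteen entries at $s=1$ yields $h_{\Pi(F_4)}(t,1)$.

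The main obstacle is upgrading the dimension lower bound $\ker(\Phi_{i,j}) \supset \langle \alpha^i \rangle_L + \langle \alpha^j \rangle_R$, with analogous factors when $i$ or $j$ equals $4$, used earlier to an exact bigraded equality. The saving grace is that the total dimension is already forced to equal $\dim \Pi(E_6) = 156$ by Proposition \ref{inequality}: any bigraded excess in the computed upper bound would contradict this total, so the inclusion must be an equality in every bidegree. What remains is the combinatorial bookkeeping of listing the $C$-basis monomials that lie in each annihilator ideal and tallying their bidegrees to confirm the predicted factorizations.
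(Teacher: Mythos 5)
Your proposal follows exactly the paper's methodology: the surjection $\Phi_{i,j}$ onto $e_j\Pi(F_4)e_i / k\cdot X$, the Gr\"obner basis of $C = k\langle\alpha,\beta\rangle/(\alpha^3,\beta^2,(\alpha\beta+\beta\alpha)^2)$, the six-element set $X$, and the dimension constraint $\dim\Pi(F_4)\geq\dim\Pi(E_6)=156$ from degeneration. The only additional content is making the $(t,s)$-bigrading explicit, which the paper itself leaves to the reader; your degree conventions $\deg(\alpha)=(2,0)$, $\deg(\beta)=(0,1)$, the shift $t^{|i-3|+|j-3|}s^{\delta_i+\delta_j}$, the bigraded $k\cdot X$ contributions, and the observation that the total-dimension equality promotes the kernel containment to a bidegree-wise equality are all correct and are precisely what the paper intends when it says ``the remaining computations are done similarly.''
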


As before, the Frobenius pairing is defined by projecting onto the (10, 3)-bigraded piece and adding the coefficients. In more detail, if $\cB$ is a basis of $\Pi(F_{4})$ with dual basis $\cB^{\vee} = \{ \gamma^{\vee} : \gamma \in \cB \}$, then
$$
(\alpha, \beta) := \sum_{\gamma^{\vee} \in Y^{\vee}} \gamma^{\vee}(\alpha \beta)
$$
where
$$
Y := \{ {}_{1} 1_{2} \beta \alpha \beta \beta {}_{2} 1_{1}, \ \
\alpha \beta \beta \alpha \beta, \ \ 
{}_{3} 1_{2} \beta \alpha \beta \beta {}_{2} 1_{3}, \ \
{}_{4} 1_{3} \cdot{}_{3} 1_{2} \alpha \beta \beta {}_{2} 1_{3} \cdot{}_{3} 1_{4} \}
$$ 
is the set of paths of $(10, 3)$-bidegree in the basis for $\Pi(F_{4})$. This Frobenius pairing deforms to the usual one in $\Pi(E_{6})$, defined analogously.

\begin{cor}
$\Pi(F_{4})$ is a flat Frobenius degeneration of $\Pi(E_{6})$.
\end{cor}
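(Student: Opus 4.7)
The plan is to reduce the theorem to a finite dimension count via Proposition~\ref{inequality}. Since $\dim_k$ is weakly decreasing under Frobenius degeneration, a chain $A \to B \to C$ with $\dim A = \dim C$ forces the intermediate step $A \to B$ to be flat. So for any Dynkin $D_Q$, I would extend the given degeneration to the chain $(Q, D_Q^{\min}) \to (Q, D_Q) \to (Q, D_Q') \to (Q, D_Q^{\max})$, where $D_Q^{\min}$ is the maximally deformed decoration (each $A_i$ smoothed to $k^{\oplus \dim A_i}$, which is always possible in the Dynkin case since each $\dim A_i \in \{1,2,3\}$) and $D_Q^{\max}$ is the most degenerate one (available via Corollary~\ref{cor:MostDegenerateCommutative} at each vertex). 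The folding construction of Section 4 identifies $\Pi(Q, D_Q^{\min}) \cong \Pi(\tilde Q)$ for some ADE quiver $\tilde Q$, so once I show $\dim \Pi(Q, D_Q^{\max}) = \dim \Pi(\tilde Q)$, the entire chain becomes an equality of dimensions and hence every degeneration in it is flat.

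Up to graph automorphism, the Dynkin $(Q, D_Q^{\max})$ are exactly the four families $B_n$, $C_n$, $F_4$, $G_2$ specified in the introduction, unfolding to $\tilde Q = A_{2n-1}, D_{n+1}, E_6, D_4$ respectively. For each, Proposition~\ref{inequality} gives $\dim \Pi(X) \geq \dim \Pi(\tilde Q)$ for free, so I must produce an explicit $k$-spanning set of size $\dim \Pi(\tilde Q)$ (known classically: $28$, $156$, and the cubic formulas $\tfrac{1}{3}n(2n-1)(2n+1)$ and $\tfrac{1}{3}n(n+1)(2n+1)$). For $G_2$ the algebra is small enough to enumerate paths level by level in the $(t,s)$-bigrading and cut down using the two preprojective relations together with the nilpotency $x^3 = 0$; linear independence is then forced by the $\dim$-inequality. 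For $F_4$ I would reduce to $e_3 \Pi(F_4) e_3$ (every sufficiently long path factors through the transition vertex), recognize this subalgebra as a quotient of $k\langle \alpha, \beta \rangle$ by cubic and quadratic relations derived from the preprojective relations at the four vertices, and apply Bergman's Diamond Lemma to bound its size; the count is then pushed back to the full algebra through the idempotent decomposition $e_i \Pi(F_4) e_j$.

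The real obstacle is handling the infinite families $B_n$ and $C_n$ uniformly in $n$. My plan there is a Gr\"obner / diamond argument in the doubled decorated path algebra: choose an admissible monomial ordering on paths, compute the leading terms of the two-sided ideal generated by $r$, enumerate the overlap and inclusion ambiguities (few thanks to the linear shape of the underlying $A$/$D$ type), and verify that each resolves; then count normal forms corner by corner in the decomposition $e_i \Pi(X) e_j$, matching the bigraded Hilbert--Poincar\'e series stated earlier. Finally, the Frobenius claim is nearly automatic once dimensions match: the socle of $\Pi(\tilde Q)$ is concentrated in the top bidegree and pairs nondegenerately with the whole algebra, and flatness in every bidegree means this pairing descends to a nondegenerate Frobenius form on $\Pi(Q, D_Q^{\max})$, from which the Frobenius structure on each intermediate $\Pi(Q, D_Q')$ follows by the Rees/filtration construction of Section 4 combined with Remark~\ref{rem:FormOnDeformations}.
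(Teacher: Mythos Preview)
Your proposal is correct and follows essentially the same route as the paper: reduce via Proposition~\ref{inequality} to the single dimension check $\dim\Pi(F_4)\le 156$, carry this out by passing through the corner algebra $e_3\Pi(F_4)e_3$ (which the paper identifies with $k\langle\alpha,\beta\rangle/(\alpha^3,\beta^2,(\alpha\beta+\beta\alpha)^2)$ and handles by the Diamond Lemma), push the count back through the idempotent decomposition, and then read off the Frobenius form from the top bidegree. The only cosmetic difference is that the paper writes down the Frobenius form explicitly as projection onto the $(10,3)$-bigraded piece and checks it deforms to the standard one on $\Pi(E_6)$, whereas you phrase it via the socle and the Rees construction; these amount to the same thing.
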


\subsection{Case: $\Pi(B_{n})$}

Consider the decorated double quiver
$$
\xymatrix{
\overset{1}{\bullet} \ar@/^/[r]^{{}_{k} S_{S}} & \overset{2}{\bullet} \ar@/^/[l]^{{}_{S} S_{k}} \ar@/^/[r]^{{}_{S} S_{S}} & \overset{3}{\bullet}  \ar@/^/[l]^{{}_{S} S_{S}} &
\cdots \ar@/^/[r]^{{}_{S} S_{S}} & \ar@/^/[l]^{{}_{S} S_{S}}  \overset{n-1}{\bullet} \ar@/^/[r]^{{}_{S} S_{S}} & \overset{n}{\bullet} \ar@/^/[l]^{{}_{S} S_{S}} .
}
$$
The relations in the preprojective algebra are
$$
\begin{cases}
R1 := {}_{1} 1_{2} \otimes_{S} {}_{2}1_{1} & \text{ at vertex } 1 \\
R2 := {}_{2} 1_{1} \otimes_{k} {}_{1} x_{2} +{}_{2} x_{1} \otimes_{k} {}_{1} 1_{2} + {}_{2} 1_{3} \otimes_{S} {}_{3} 1_{2} & \text{ at vertex } 2 \\
Rj := {}_{j} 1_{j+1} \otimes_{S} {}_{j+1} 1_{j} + {}_{j} 1_{j-1} \otimes_{S} {}_{j-1} 1_{j} & \text{ at vertex } j \in \{3, 4, \dots, n-1 \} \\
Rn := {}_{n} 1_{n-1} \otimes_{S} {}_{n-1} 1_{n} & \text{ at vertex } n. \\ 
\end{cases}
$$
These relations are homogeneous in path length and can be made homogeneous in $x$-degree by declaring the $x$-grading of ${}_{i} 1_{i+1}$ and ${}_{i+1} 1_{i}$ to be 1/2 for $i >1$. Therefore, the preprojective algebra inherits both gradings from the the path algebra. 

The idea is to produce a basis for $\Pi(B_{n})$ by realizing it as a subalgebra of $P(Q, D_{Q})$, denoted $P(Q, D_{Q})_{\irr}$. 
The key ingredient is an algebra map 
$$
\opr: P(Q, D_{Q})\rightarrow P(Q, D_{Q})_{\irr}
$$ 
satisfying:
$$
(1) \ \ \ker(\opr) = \ker( P(Q, D_{Q}) \rightarrow \Pi(B_{n}) )
\hspace{1cm} \text{ and } \hspace{1cm}
(2) \ \ \opr \circ i = id_{P(Q, D_{Q})_{\irr}},
$$
where $i: P(Q, D_{Q})_{\irr} \rightarrow P(Q, D_{Q})$ is the natural inclusion. 
The first condition says $\opr$ descends to $\Pi(B_{n})$ and the second condition implies $\opr$ is surjective. Therefore $r$ induces an identification $\Pi(B_{n}) \cong P(Q, D_{Q})_{\irr}$ as algebras. 

To define $\opr$, we choose a partial ordering $\leq$ on paths in $P(B_{n})$. This gives rise to a notion of leading term, for any linear combination of paths, denoted $\oplt$. Then, for each relation $Rj$ in $\Pi(B_{n})$, define a $k$-linear map $r_{j}$ which is the identity on all monomials except taking the leading term $\oplt(Rj)$ to the strictly smaller element $\oplt(Rj) - Rj$. This map is called a reduction and informally it imposes the relation $Rj$ only in the direction that makes the path smaller with respect to the partial order.

\begin{rem}
For any pair of monomials $A, B \in P(Q, D_{Q})$, $A Rj B$ is also a relation and hence one defines the reduction $r_{AjB}$ taking $A(\oplt(Rj))B$ to $A(\oplt(Rj)-Rj)B$. Since $A$ and $B$ will be clear in context, we simplify but abuse notation and write $r_{j}$ for any such reduction.
\end{rem}

 One would like to define $\opr$ to be any finite composition of these reductions such that the resulting expression is irreducible, i.e. every reduction acts as the identity. The existence of $\opr$ will be clear in our context since each path will have a finite disorder index. This index is a natural number measuring the failure of a path to be irreducible and it strictly decreases under reductions, and hence is eventually zero. However, $\opr$ is, in general, \emph{not} well-defined, as it depends on the order that the reductions are applied. 
 
The Diamond Lemma in \cite{Bergman} says that $\opr$ is well-defined, if whenever a product of monomials $\alpha \beta \gamma$ is reduced by $r_{1}$ and $r_{2}$ to elements $r_{1}(\alpha \beta) \gamma$ and $\alpha r_{2}(\beta \gamma)$, each further reduces to the same irreducible element. Such a 5-tuple $(\alpha, \beta, \gamma, r_{1}, r_{2})$ where $\alpha \beta$ is the leading term for $r_{1}$ and $\beta \gamma$ is the leading term for $r_{2}$, is called an \emph{ambiguity}. And we say the ambiguity \emph{resolves} if $r_{1}(\alpha \beta) \gamma$ and $\alpha r_{2}(\beta \gamma)$ have the same reduced form. 


The name Diamond Lemma comes from the visual that every diagram
$$
\xymatrix{
& \alpha \beta \gamma \ar[rd]^{r_{2}} \ar[ld]_{r_{1}} & \\
r_{1}(\alpha \beta) \gamma & & \alpha r_{2}(\beta \gamma)
}
$$
called an ambiguity, can be completed to a diagram,
$$
\xymatrix{
& \alpha \beta \gamma \ar[rd]^{r_{2}} \ar[ld]_{r_{1}} & \\
r_{1}(\alpha \beta) \gamma \ar[rd]^{r_{1}'}& & \alpha r_{2}(\beta \gamma) \ar[ld]_{r_{2}'} \\
& r_{1}'( r_{1}(\alpha \beta) \gamma) = r_{2}'(\alpha r_{2}(\beta \gamma))
}
$$
for some composition of reductions, $r_{1}'$ and $r_{2}'$, called a resolution. 

If every ambiguity is resolvable then 
$\opr$ is well-defined and clearly satisfies (1) and (2) above. Hence $i$ is a splitting for $\opr$ and realizes,
$$
P(Q, D_{Q}) \cong P(Q, D_{Q})_{\irr} \oplus I
$$
the decorated preprojective algebras as a \emph{subalgebra}, as opposed to a quotient, of the decorated path algebra. From this perspective a basis for the decorated preprojective algebras is given by a basis for the irreducible paths. 

\begin{exam} \label{exam:C}
Consider $C = k \langle \alpha, \beta \rangle / ( \alpha^{3}, \beta^{2}, (\alpha \beta + \beta \alpha)^{2})$ from the previous section. One defines a partial order on monomials in $k \langle \alpha, \beta \rangle$ by $\gamma \leq \gamma'$ if the length of $\gamma$ is less than the length of $\gamma'$ or they have the same length but $\gamma$ is first alphabetically. Then the three relations are viewed as reductions:
$$
r_{1}(\alpha^{3}) = 0, \ \ r_{2}(\beta^{2}) = 0, \ \ r_{3}(\beta \alpha \beta \alpha) = -\alpha \beta \alpha \beta - \beta \alpha \alpha \beta.
$$
Reducing $\beta \alpha \beta \alpha \alpha^{2}$ using $r_{1}$ gives zero and using $r_{3} \circ r_{3} \circ r_{3}$ gives $\beta \alpha^{2} \beta \alpha^{2} - \alpha^{2} \beta \alpha^{2} \beta + \alpha \beta \alpha^{2} \beta \alpha$. And hence we add the reduction, $r_{4}( \beta \alpha^{2} \beta \alpha^{2})
= \alpha^{2} \beta \alpha^{2} \beta - \alpha \beta \alpha^{2} \beta \alpha $, and the remaining ambiguities, $\beta \beta \alpha^{2} \beta \alpha^{2}$ and $\beta \alpha^{2} \beta \alpha^{2} \alpha$, are resolvable. We conclude that a word is reduced if it has no occurrences of $\alpha^{3}, \beta^{2}, \beta \alpha \beta \alpha$, or $\beta \alpha^{2} \beta \alpha^{2}$.

Reductions $r_{1}$ and $r_{2}$ reduce any word to one alternating between $\beta$ and either $\alpha$ or $\alpha^{2}$. The reduction $r_{3}$ says a reduced word has only a single $\beta \alpha$, which necessarily occurs at the end. The reduction $r_{4}$ says any reduced word starting with $\beta$ cannot have repeating $\alpha^{2}$. Hence the longest reduced word is $\alpha^{2} \beta \alpha^{2} \beta \alpha \beta$ and it is not hard to realize all 24 basis elements of $C$ as subwords. 
\end{exam}
 
Define the following ordering of the generators at each vertex $i$
$$
{}_{i} x_{i-1} > {}_{i} x_{i+1} > {}_{i} 1_{i-1} > {}_{i} 1_{i+1}
$$
with $1 < i < n$ and $_{n} x_{n-1} > {}_{n} 1_{n-1}$ and extend this to a partial ordering on all paths lexicographically. That is, view each path $\gamma$ of length $l$ as a product of length one paths $\gamma = \gamma_{1}\cdot \gamma_{2} \cdot \cdots \cdot \gamma_{l}$ and define $\alpha > \beta$ for paths $\alpha$ and $\beta$ if
\begin{itemize}
\item[(I)] $\alpha$ is longer than $\beta$ or 
\item[(II)] they are the same length with $\alpha$ having the larger $x$-grading or
\item[(III)] they have the same length and $x$-grading with $\alpha_{i} > \beta_{i}$ for the smallest $i$ where $\alpha_{i} \neq \beta_{i}$.
\end{itemize}
To make this definition well-defined one should write the any subpath $x \otimes_{S} 1$ as $1 \otimes_{S} x$, always moving the $x$'s towards the end of the path. 

The relations $R1, R2, \cdots Rn$ give rise to a system of reductions, with respect to the lexicographical ordering.
$$
\begin{array}{ll}
\text{Upward Reductions:} & 
\text{Integral Reductions:} \\
\ \ \ \bullet \ \ r^{U}_{j}({}_{j} 1_{j-1} \otimes_{S} {}_{j-1} 1_{j} ) = {}_{j} 1_{j+1} \otimes_{S} {}_{j+1} 1_{j}  & 
\ \ \ \bullet \ \ r^{I}_{j}( \gamma_{2, 1, j+1, j} ) = \gamma_{2, 3, 1, j} \\
\hspace{2cm} \text{for } 3 \leq j \leq n-1 & 
\hspace{2cm} \text{for } 2 \leq j \leq n-1 \\
\ \ \ \bullet \ \ r^{U}_{n}({}_{n} 1 \otimes_{S} 1_{n}) = 0 &
\text{Death Reductions:} \\
\text{X-Reductions:} & 
\ \ \ \bullet \ \ r^{D}_{i, j}(\gamma_{i, n-j+i+1, 1, j}) = 0 \\
\ \ \ \bullet \ \ r^{X}_{2} ({}_{2} x \otimes_{k} 1_{2} ) = {}_{2} 1 \otimes_{k} x_{2} + {}_{2} 1 \otimes_{S} 1_{2} &
\hspace{2cm} \text{ for } 2 \leq i < j \leq n \\ 
\ \ \ \bullet \ \ r^{X}_{2,1}( {}_{2} 1_{1} \otimes_{k} {}_{1} 1_{2} \otimes_{S} {}_{2} x_{1}) = \gamma_{2, 3, 1} &
\text{Mound Reductions:} \\
& \ \ \ \bullet \ \ r^{M}_{j}(\gamma_{1, j, 1}) = 0, \text{ for } 2 \leq j \leq n
\end{array}
$$
where we use the notation $\gamma_{i_{1}, i_{2}, \dots, i_{n}}$ with $i_{1}, \dots, i_{n} \in Q_{0}$ introduced in Remark \ref{rem:notationforpaths}.

Upward Reductions pull paths towards vertex n and realize paths visiting vertex n consecutively to be zero, see Figure \ref{Upward Reductions}. The $X$-reductions remove occurrences of $x$ when possible and moves them to the end of the path otherwise, see Figure \ref{X Reductions}. The Integral Reductions straighten paths after visiting vertex 1, thus removing any instances of the shape $\int$, see Figure \ref{Integral Reductions}. The Death Reductions identify when paths visiting vertex 1 are too long and hence zero and the Mound Reductions identify paths of the shape $\cap$ starting and ending at vertex 1 to be zero, see Figure \ref{Mound and Death Reductions}. 

One reads these figures as reductions according to the following rules:
\begin{itemize} \label{scheme}
\item[--] The paths are depicted in two dimensions with integer vertical component indicating the vertices of the quiver and the horizontal component increases when the path changes directions, for visualization.
\item[--] Unbroken paths are products of the generators ${}_{i} 1_{i+1}, {}_{i+1} 1_{i}$ for $i \in \{1, \dots,  n-1 \}$. Paths  involving generators ${}_{i} x_{i}$  for $i \in \{2, 3, \dots, n \}$ are broken by $x$ at vertex $i$, i.e. $-x-$ indicates an insertion of the length zero path ${}_{i} x_{i}$ at this vertex.
\item[--] Caution: Signs are not depicted. 
\end{itemize}

\begin{rem}
Starting from the relations of the preprojective algebra, viewed as reductions $r^{M}_{2}, r^{X}_{2}, r^{U}_{j}, r^{U}_{n}$ one realizes the need for the remaining reductions in light of the following ambiguities.
\begin{itemize}
\item $R2 \otimes_{S} {}_{2} 1_{1}$ reduces to zero under $r^{X}_{2}$ and to $\gamma_{2,1,2} {}_{2} x_{1} - \gamma_{2,3,1}$ under $r^{M}_{2}$ giving rise to the additional reduction $r^{X}_{2,1}$ sending $\gamma_{2, 1, 2} {}_{2} x_{1}$ to $\gamma_{2,3,1}$.
\item ${}_{1} 1_{2} \otimes_{S} R2 \otimes_{S} \cdots \otimes_{S} R2 \otimes_{S} {}_{2} 1_{1}$ (with $j-2$ copies of $R2$) reduces to zero under $r^{X}_{2}$ and to $\gamma_{1, j, 1}$ under the Upward Reductions $r^{U}_{j-1} \circ r^{U}_{j-2} \circ \cdots \circ r^{U}_{3}$ and hence give rise to the Mound Reduction $r^{M}_{j}$ taking $\gamma_{1, j, 1}$ to zero.
\item 
$(R2 \otimes_{S} {}_{2} 1_{1} \otimes_{k} {}_{1} 1_{2}
-{}_{2}1_{1} \otimes_{k} {}_{1} 1_{2} \otimes_{S} R2) \otimes_{S} \gamma_{2, j}$ reduces to zero under $r^{X}_{2}$ and to $\gamma_{2, 3, 1, j}-\gamma_{2,1,j+1,j}$ under $r^{U}_{j} \circ \cdots \circ r^{U}_{3}$ and so we add the Integral Reduction $r^{I}_{j}$ sending $\gamma_{2,1,j+1,j}$ to $\gamma_{2,3,1,j}$.
\item The path $\gamma_{i, 1, j} \otimes_{S} ( {}_{j} 1_{j+1} \otimes_{S} {}_{j+1} 1_{j})^{n-j+1}$ is sent to zero by $r^{U}_{n} \circ r^{U}_{n-1} \circ \cdots \circ r^{U}_{j+1}$ and iterated applications of the Integral Reduction $r^{I}_{j}$ gives 
$\gamma_{i, 2} \otimes_{S} ({}_{2} 1_{3} \otimes_{S} {}_{3} 1_{2})^{n-j+1} \gamma_{2, 1, j}$ from which applying the Upper Reductions $r^{U}_{n-j+i+1} \circ \cdots \circ r^{U}_{4}\circ r^{U}_{3}$ gives $\gamma_{i, n-j+i+1, 1, j}$. Hence we add the Death Reduction $r^{D}_{i, j}$ taking this term to zero to resolve the ambiguity. 
\end{itemize} 
\end{rem}

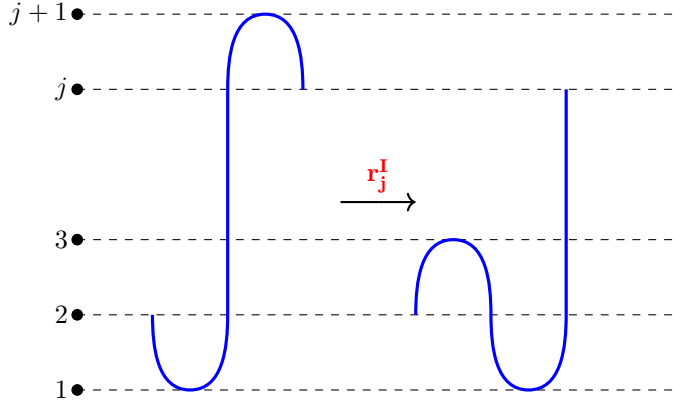
\begin{figure}
\centering

\begin{tikzpicture}

\draw[dashed] (0, 1) - - (8, 1);
\draw[dashed] (0, 2) - - (8, 2);
\draw[dashed] (0, 3) - - (8, 3);
\draw[dashed] (0, 5) - - (8, 5) ;
\draw[dashed] (0, 6) - - (8, 6) ;
\filldraw[black] (0, 6) circle (2pt) node[anchor=east] {$j+1$} ;
\filldraw[black] (0, 5) circle (2pt) node[anchor=east] {$j$} ;
\filldraw[black] (0, 3) circle (2pt) node[anchor=east] {$3$} ;
\filldraw[black] (0, 2) circle (2pt) node[anchor=east] {$2$} ;
\filldraw[black] (0, 1) circle (2pt) node[anchor=east] {$1$} ;

\draw[very thick, blue] (1,2) to [out=270,in=180] (1.5,1) to [out=0,in=-90] (2,2) to [out=90,in=-90] (2,5) to [out=90, in=180] (2.5,6) to [out=0, in=90] (3, 5);

\draw[very thick, blue] (4.5, 2) to [out=90,in=180] (5,3) to [out=0,in=90] (5.5, 2) to [out=-90, in=180] (6, 1) to [out=0, in=-90] (6.5, 2) to [out=90, in =-90] (6.5, 5);

\draw[thick] (3.5, 3.5) -- (4, 3.5) node[anchor=south] {\color{red} $\mathbf{r^{I}_{j}}$};
\draw [thick, ->] (4,3.5) -- (4.5, 3.5);

\end{tikzpicture}

\caption{Integral Reduction $r^{I}_{j}$ for $2 \leq j \leq n-1$}
\label{Integral Reductions}
\end{figure}

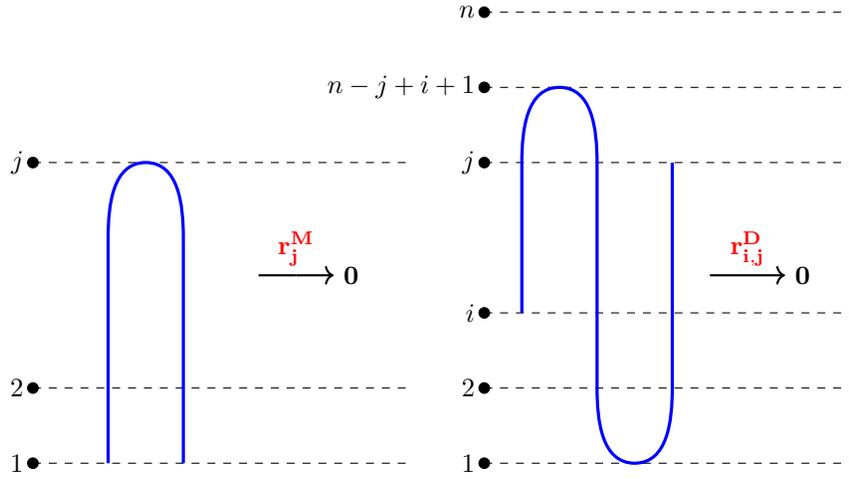
\begin{figure}
\centering

\begin{tikzpicture}

\draw[dashed] (0, 1) - - (5, 1);
\draw[dashed] (0, 2) - - (5, 2);
\draw[dashed] (0, 5) - - (5, 5) ;
\filldraw[black] (0, 5) circle (2pt) node[anchor=east] {$j$} ;
\filldraw[black] (0, 2) circle (2pt) node[anchor=east] {$2$} ;
\filldraw[black] (0, 1) circle (2pt) node[anchor=east] {$1$} ;

\draw[very thick, blue] (1,1) to [out=90,in=-90] (1,4) to [out=90,in=180] (1.5,5) to [out=0,in=90] (2,4) to [out=270, in=90] (2,1);

\draw[thick] (3, 3.5) -- (3.5, 3.5) node[anchor=south] {\color{red} $\mathbf{r^{M}_{j}}$};
\draw [thick, ->] (3.5,3.5) -- (4, 3.5) node[anchor=west] {\textbf{0}};

\draw[dashed] (6, 1) - - (11, 1);
\draw[dashed] (6, 2) - - (11, 2);
\draw[dashed] (6, 3) - - (11, 3);
\draw[dashed] (6, 5) - - (11, 5) ;
\draw[dashed] (6, 6) - - (11, 6);
\draw[dashed] (6, 7) - - (11, 7);
\filldraw[black] (6, 7) circle (2pt) node[anchor=east] {$n$} ;
\filldraw[black] (6, 6) circle (2pt) node[anchor=east] {$n-j+i+1$} ;
\filldraw[black] (6, 5) circle (2pt) node[anchor=east] {$j$} ;
\filldraw[black] (6, 3) circle (2pt) node[anchor=east] {$i$} ;
\filldraw[black] (6, 2) circle (2pt) node[anchor=east] {$2$} ;
\filldraw[black] (6, 1) circle (2pt) node[anchor=east] {$1$} ;

\draw[very thick, blue] (6.5, 3) to [out=90, in=-90] (6.5, 5) to [out=90,in=180] (7,6) to [out=0,in=90] (7.5, 5) to [out=-90, in=90] (7.5, 2) to [out=-90, in=180] (8, 1) to [out=0, in=-90] (8.5, 2) to [out=90, in =-90] (8.5, 5);

\draw[thick] (9, 3.5) -- (9.5, 3.5) node[anchor=south] {\color{red} $\mathbf{r^{D}_{i,j}}$};
\draw [thick, ->] (9.5,3.5) -- (10, 3.5) node[anchor=west] {\textbf{0}};

\end{tikzpicture}

\caption{Mound Reduction $r^{M}_{j}$ for $1 \leq j \leq n$ on the left and Death Reduction $r^{D}_{i,j}$ on the right}
\label{Mound and Death Reductions}
\end{figure}

\begin{figure}
\centering

\begin{tikzpicture}

\draw[dashed] (0, 1) - - (8, 1);
\draw[dashed] (0, -3) - - (8, -3);
\draw[dashed] (0, 2) - - (2, 2); 
\draw[dashed] (2.5, 2) - - (8, 2);
\draw[dashed] (0, -2) - - (.5, -2);
\draw[dashed] (1, -2) - - (5, -2);
\draw[dashed] (5.5, -2) - - (5.75, -2);
\draw[dashed] (6.25, -2) - - (8, -2);
\draw[dashed] (0, 3) - - (8, 3);
\draw[dashed] (0, -1) - - (8, -1);
\filldraw[black] (0, 3) circle (2pt) node[anchor=east] {$3$} ;
\filldraw[black] (0, -1) circle (2pt) node[anchor=east] {$3$} ;
\filldraw[black] (0, 2) circle (2pt) node[anchor=east] {$2$} ;
\filldraw[black] (0, -2) circle (2pt) node[anchor=east] {$2$} ;
\filldraw[black] (0, 1) circle (2pt) node[anchor=east] {$1$} ;
\filldraw[black] (0, -3) circle (2pt) node[anchor=east] {$1$} ;

\draw (2.25,2) node {\large{\textbf{x}}};
\draw[very thick, blue] (1,2) to [out=270,in=180] (1.5,1) to [out=0,in=180] (2,2) 
(2.5, 2) to [out=0, in=180] (3,1);

\draw[very thick, blue] (4.5, 2) to [out=90,in=180] (5,3) to [out=0,in=90] (5.5, 2) to [out=-90, in=180] (6, 1);

\draw[thick] (3.5, 1.3) -- (4, 1.3) node[anchor=south] {\color{red} $\mathbf{r^{X}_{2,1}}$};
\draw [thick, ->] (4,1.3) -- (4.5, 1.3);

\draw (7.75-7,2-4) node {\large{\textbf{x}}};
\draw[very thick, blue] (8-7, 2-4) to [out=0, in=180] (8.5-7,1-4) to [out=0, in=-90] (9-7, 2-4);

\draw[thick] (9.5-7, 1.3-4) -- (10-7, 1.3-4) node[anchor=south] {\color{red} $\mathbf{r^{X}_{2}}$};
\draw [thick, ->] (10-7,1.3-4) -- (10.5-7, 1.3-4);

\draw (12.25-7,2-4) node {\large{\textbf{x}}};
\draw[very thick, blue] (11-7,2-4) to [out=270,in=180] (11.5-7,1-4) to [out=0,in=180] (12-7,2-4) ;

\draw (13-7,2-4) node {\large{\textbf{+}}};

\draw[very thick, blue] (13.5-7, 2-4) to [out=90,in=180] (14-7,3-4) to [out=0,in=90] (14.5-7, 2-4);

\end{tikzpicture}

\caption{X-reductions $r^{X}_{2,1}$ and $r^{X}_{2}$}
\label{X Reductions}
\end{figure}

\begin{figure}
\centering

\begin{tikzpicture}

\draw[dashed] (0, 1) - - (6, 1);
\draw[dashed] (0, 2) - - (6, 2);
\draw[dashed] (0, 3) - - (6, 3) ;
\filldraw[black] (0, 3) circle (2pt) node[anchor=east] {$j+1$} ;
\filldraw[black] (0, 2) circle (2pt) node[anchor=east] {$j$} ;
\filldraw[black] (0, 1) circle (2pt) node[anchor=east] {$j-1$} ;

\draw[very thick, blue] (1, 2) to [out=-90, in=180] (1.5, 1) to [out=0, in=-90] (2, 2);

\draw[thick] (2.5, 1.2) -- (3, 1.2) node[anchor=south] {\color{red} $\mathbf{r^{U}_{j}}$};
\draw [thick, ->] (3,1.2) -- (3.5, 1.2) ;

\draw[very thick, blue]  (4, 2) to [out=90,in=180] (4.5,3) to [out=0,in=90] (5, 2); 

\draw[dashed] (0, 5) - - (6, 5);
\draw[dashed] (0, 6) - - (6, 6);
\filldraw[black] (0, 6) circle (2pt) node[anchor=east] {$n$} ;
\filldraw[black] (0, 5) circle (2pt) node[anchor=east] {$n-1$} ;

\draw[very thick, blue] (1, 6) to [out=-90, in=180] (1.5, 5) to [out=0, in=-90] (2, 6);

\draw[thick] (2.5, 5.2) -- (3, 5.2) node[anchor=south] {\color{red} $\mathbf{r^{U}_{n}}$};
\draw [thick, ->] (3,5.2) -- (3.5, 5.2) node[anchor=west] {\textbf{0}};

\end{tikzpicture}

\caption{Upward Reduction $r^{U}_{j}$ for $3 \leq j \leq n-1$ and $r^{U}_{n}$}
\label{Upward Reductions}
\end{figure}

\begin{prop}
The following is a basis for $e_{i} \Pi(B_{n}) e_{j}$:
$$
\left\lbrace
\begin{array}{l l l}
\gamma_{i, k, j}; \ \ \gamma_{i, k, j} x;  
& \max \{ i, j \} \leq k \leq n
& \\
\gamma_{i, m, 1, j}; \ \ \gamma_{i, m, 1, j} x 
&  i \leq m \leq n-j+i
& \text{if } i, j \geq 2 \\
\gamma_{1, k, j}; \ \ \gamma_{1, k, j}  x 
& j \leq k \leq n 
& \text{if } i=1, j \geq 2 \\
\gamma_{i, k, 1}; \ \ x \gamma_{i, k, 1} 
& i \leq k \leq n
& \text{if } j=1, i \geq 2 \\
\gamma_{1, k} x \gamma_{k, 1}; \ {}_{1}1_{1}
& 1 \leq k \leq n
& \text{if } i=j=1. 
\end{array}
\right .
$$
\end{prop}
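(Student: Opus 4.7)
The plan is to combine reduction-theoretic spanning with the dimension inequality of Proposition \ref{inequality}. Since $\Pi(A_{2n-1})$ Frobenius deforms to $\Pi(B_{n})$, one has a priori $\dim \Pi(B_{n}) \geq \dim \Pi(A_{2n-1}) = \tfrac{1}{3} n(2n-1)(2n+1)$. So it suffices to exhibit a spanning set of exactly this cardinality; linear independence, and thereby the basis property, will then follow automatically.

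For spanning, the idea is to argue that every monomial in the decorated path algebra $P_{k}(\overline{Q}, \overline{D_{Q}})$ reduces, modulo the preprojective ideal, to a $k$-linear combination of the listed forms, using the system of reductions $r^{U}_{j}, r^{X}_{2}, r^{X}_{2,1}, r^{I}_{j}, r^{D}_{i,j}, r^{M}_{j}$. I would stratify paths by the number of visits to vertex $1$. Paths avoiding vertex $1$ are flattened by the Upward reductions (so any downward-then-upward $\cup$-shape between consecutive vertex levels is pushed toward vertex $n$) and the $X$-reductions (which shove any $x$ toward the tail), yielding the forms $\gamma_{i,k,j}$ and $\gamma_{i,k,j}x$. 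Paths visiting vertex $1$ exactly once reduce by the Integral reductions and $X$-reductions to $\gamma_{i,m,1,j}$ and $\gamma_{i,m,1,j}x$, with the Death reductions providing the upper bound $m \leq n-j+i$. Paths visiting vertex $1$ at least twice are killed by the Mound reductions. The case $i=j=1$ requires separate bookkeeping because there is no generator $_{1}x_{1}$, and one checks that only $_{1}1_{1}$ together with $\gamma_{1,k}x\gamma_{k,1}$ survive.

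Irreducibility of the listed paths is verified by inspection: each is monotone outside of an optional single visit to vertex $1$, and carries its $x$ at a maximal position, so no leading monomial of any reduction appears as a subword. For the count, summing the four cases gives $4(n-\max\{i,j\}+1)$ paths in the block $i,j \geq 2$, $2(n-\max\{i,j\}+1)$ paths when exactly one of $i,j$ is $1$, and $n$ paths when $i=j=1$; after telescoping this collapses to $\tfrac{1}{3} n(2n-1)(2n+1)$, matching $\dim \Pi(A_{2n-1})$.

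The main obstacle will be the rigorous verification that every path reduces to the listed forms, which amounts to applying the Diamond Lemma of \cite{Bergman}, i.e.\ showing that every ambiguity between the ten reduction families admits a common resolution. The scheme itself already records the guiding heuristic: each non-obvious reduction ($r^{X}_{2,1}, r^{I}_{j}, r^{D}_{i,j}, r^{M}_{j}$) was introduced precisely to resolve an ambiguity coming from $R2$ interacting with a chain of Upward reductions, so most overlaps are built in by construction. The residual overlaps to check by hand are those between nested Upward reductions and Integral/Death reductions deep in the quiver, which admit an inductive treatment on the vertex distance from $1$. Alternatively, one may sidestep confluence entirely by invoking Proposition \ref{inequality} once spanning has been established, which is the strategy I would prefer for the cleanest exposition.
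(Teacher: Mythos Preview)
Your proposal is essentially the paper's own argument. The paper performs the same case analysis on the number of visits to vertex~$1$, using the same reduction families, to establish spanning, and then obtains linear independence in two ways: by checking that all Diamond Lemma ambiguities resolve, and (in a remark) by the dimension bound from Proposition~\ref{inequality}, exactly the shortcut you prefer. One small imprecision: for paths avoiding vertex~$1$ the $x$'s are pushed to the tail not by the $X$-reductions (which live at vertex~$2$) but simply because all intermediate tensor products are over $S$; and ``paths visiting vertex~$1$ at least twice are killed'' needs the caveat $j>1$, which you correctly supply in the next sentence. Your cardinality count $4(n-\max\{i,j\}+1)$ for $i,j\geq 2$ is correct once one notes the implicit constraint $m\leq n$ in the $\gamma_{i,m,1,j}$ family, and the total does telescope to $\tfrac{1}{3}n(2n-1)(2n+1)$.
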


\begin{cor}
The bigraded Hilbert--Poincar\'{e} series are: 
\begin{align*}
\begin{split}
h_{e_{i} \Pi(B_{n}) e_{j}}(s, t) {}&=
 t^{|i-j|}s^{|i-j|/2}(1+s)\\
 &\hspace{1cm} \cdot (1+st^2 +s^2t^4 + \cdots + s^{n-\max \{ i, j \}} t^{2n-2 \max \{ i, j \}}) \ \text{ if } i, j \geq 2
\end{split}  \\
\begin{split}
 {}&+
 t^{i+j-2}s^{(i+j-4)/2}(1+s)\\
 &\hspace{1cm} \cdot (1+st^2 +s^2t^4 + \cdots + s^{(n-j-1)/2} t^{n-j-1}) 
\end{split}  \\
\begin{split}
h_{e_{1} \Pi(B_{n}) e_{j}}(s, t) {}&= 
h_{e_{j} \Pi(B_{n}) e_{1}}(s, t) = t^{j-1} s^{(j-1)/2}(1+s)\\
 &\hspace{1cm} \cdot (1+st^2+s^2t^4+ \cdots + s^{n-j} t^{2n-2j})  \text{ if } j \geq 2
\end{split} \\
h_{e_{1} \Pi(B_{n}) e_{1}}(s, t) {}&=1 + st^2+ s^2t^4 + \cdots + s^{n-1}t^{2n-2} 
\end{align*}
and the $x$-degree of ${}_{i} 1_{i+1}$ and ${}_{i+1} 1_{i}$ are $1/2$ to ensure the relations $R2, R3, \dots R(n-1)$ are homogeneous in the $x$-degree. \\
\end{cor}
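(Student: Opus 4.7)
The plan is to extract the bigraded Hilbert--Poincar\'e series directly from the explicit basis for $e_i\Pi(B_n)e_j$ established in the preceding proposition: for each basis path $\gamma$, compute the bidegree $(\deg_t\gamma,\deg_s\gamma)$ with $\deg_t$ the path length and $\deg_s$ the adjusted $x$-degree (each arrow ${}_{a} 1_{a+1}$ or ${}_{a+1} 1_{a}$ contributing $1/2$, and each vertex element ${}_{b} x_{b}$ with $b\ge 2$ contributing $1$), then sum. The uniform $(1+s)$ factor in the statement will arise from pairing each listed basis path $\gamma$ with its extension $\gamma\cdot x$ (or $x\cdot\gamma$), which shifts $\deg_s$ by $1$ without changing $\deg_t$.

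I would then handle the four cases of the proposition in turn. For $i=j=1$, the basis element ${}_{1} 1_{1}$ contributes $1$, and for $2\le k\le n$ the element $\gamma_{1,k}\,x\,\gamma_{k,1}$ has length $2(k-1)$ with a single vertex loop at $k$, yielding bidegree $(2(k-1),\,k-1)$; summing gives $\sum_{k=1}^n s^{k-1}t^{2(k-1)}$. For $i=1$, $j\ge 2$ (and its transpose via path reversal), the paths $\gamma_{1,k,j}$ with $j\le k\le n$ have length $2k-j-1$ and $x$-degree $(2k-j-1)/2$; factoring out the smallest term at $k=j$ reindexes the sum as $t^{j-1} s^{(j-1)/2}(1+s)\sum_{\ell=0}^{n-j} s^\ell t^{2\ell}$.

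For the main case $i,j\ge 2$, the basis splits into the ``single-hump'' family $\gamma_{i,k,j}$ for $\max\{i,j\}\le k\le n$ and the ``double-hump'' family $\gamma_{i,m,1,j}$ for $i\le m\le n-j+i$; I would assume $i\le j$ without loss of generality and invoke the path-reversal symmetry $e_i\Pi(B_n) e_j\cong e_j\Pi(B_n) e_i$ for the reverse case. A direct computation gives bidegrees $(2k-i-j,\, k-(i+j)/2)$ and $(2m+j-i-2,\, m+(j-i)/2-1)$ respectively; after reindexing by $\ell=k-\max\{i,j\}$ or $\ell=m-i$ the two families produce geometric sums whose combination matches the stated formula, and which factor uniformly as $t^{|i-j|}s^{|i-j|/2}\bigl(1+t^{2\min-2}s^{\min-1}\bigr)(1+s)\sum_\ell s^\ell t^{2\ell}$, recovering the form displayed in the introduction.

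The argument is essentially mechanical once the basis is in hand. The only real subtlety is bookkeeping around the boundary vertex $1$, where the algebra is $k$ rather than $S$: one must check that assigning each arrow $x$-degree $1/2$ (including the arrows between vertices $1$ and $2$, even though no $x$ is attached there) is consistent with the relations $R2,\dots,R(n-1)$ being $x$-homogeneous, and that the Death-Reduction upper bound $m\le n-j+i$ translates precisely into the stated range of the geometric series.
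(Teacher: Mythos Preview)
Your approach---read off bidegrees from the basis in the preceding proposition, sum, and factor---is exactly how the corollary is meant to follow (the paper states it without separate proof). One point needs correcting, however: the $x$-degree convention you propose, with \emph{every} arrow including those between vertices $1$ and $2$ carrying degree $1/2$, makes $R2$ inhomogeneous. The term ${}_2 1_3\otimes_S{}_3 1_2$ sits in $x$-degree $1$, while ${}_2 1_1\otimes_k{}_1 x_2$ would sit in degree $1/2+3/2=2$ under your convention (since ${}_1 x_2={}_1 1_2\cdot{}_2 x_2$). The text just before the proposition assigns degree $1/2$ only for $i>1$; the arrows between vertices $1$ and $2$ carry $x$-degree $0$, with ${}_1 x_2,{}_2 x_1$ in degree $1$. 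This shifts some of your $s$-exponents for paths passing through vertex $1$, but does not affect the method. Indeed, your own computation of bidegree $(2(k-1),\,k-1)$ for $\gamma_{1,k}\,x\,\gamma_{k,1}$ already uses the correct convention rather than the one you stated; your $i=1,\,j\ge 2$ computation, on the other hand, does not, and would need adjusting.
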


\begin{proof}
We first show that any path can be reduced to a linear combination of paths in the proposed basis. Let $\alpha$ be a path from $i$ to $j$. \\
\\
\underline{Case: $\alpha$ doesn't visit vertex 1}\\
\indent If $\alpha$ does not contain any occurrences of $x$ then applying the Upward Reductions $r^{U}_{n} \circ r^{U}_{n-1} \circ \cdots \circ r^{U}_{3}$ is either zero or $\gamma_{i, k, j}$ for $k \geq \max \{ i, j \}$. Any occurrence of $x$ can be moved to the end as all tensor products in the path are over $S$ and hence two such occurrences gives zero. \\
\\
\underline{Case: $\alpha$ visits vertex 1 once} 
\\
\indent If $\alpha$ doesn't contain an occurrence of $x$ then by the above the subpaths before and after vertex 1 can be reduced to the form $\gamma_{i, k, 2}$ and $\gamma_{2, l, j}$ and hence $\alpha$ can be reduced to $\gamma_{i, k, 1, l, j}$. \\
\indent If $i=1$ then such a path has the form $\gamma_{1, l, j}$ for $l \geq j$. Similarly, if $j =1$ reading the path backward must have this form. \\
\indent If $i, j >1$ then applying the Integral Reductions $r^{I}_{j} \circ r^{I}_{j+1} \circ \cdots \circ r^{I}_{l-1}$ gives $\gamma_{i, m, 1, j}$, where the path is now direct after visiting vertex 1. Finally, applying the Death Reductions $r_{i, j}$ gives zero unless $m \leq n-j+i$. \\
\indent If $x$ occurs in $\alpha$ before visiting vertex 1, then one can arrange for the path to contain ${}_{2} x_{1}$, since all tensor products are over $S$ before 1. The X-Reduction $r^{X}_{2}$ gives a linear combination of paths where $x$ occurs after vertex 1. So the $x$ can be taken to occur at the end, giving the desired form. \\
\\
\underline{Case: $\alpha$ visits vertex 1 at least twice} \\
\indent If $j >1$ then by applying the X-reductions $r^{X}_{2,1} \circ r^{X}_{2}$ successively one can ensure the $x$'s appear at the end
and so the Mound Reductions $r^{M}_{n} \circ \cdots  \circ r^{M}_{2}$ send the subpath starting and ending at vertex 1 to zero. So the path must end at vertex 1 and cannot visit vertex 1 more than twice. Consequently, the path is concatenation of a path in the previous case ending at vertex 2 (i.e. $\gamma_{i, m, 1, 2}$, $\gamma_{i, m, 1, 2} x$, $\gamma_{1, k, 2}$, $\gamma_{1, k, 2} x)$ with ${}_{2} 1_{1}$ and therefore, after applying the Mound Reductions $r^{M}_{n} \circ \cdots \circ r^{M}_{3} \circ r^{M}_{2}$, is zero or of the form $\gamma_{i, m, 1, 2} {}_{2} x_{1}$ or $\gamma_{1, k} x \gamma_{k,1}$. The $X$-Reduction $r^{X}_{2,1}$ sends $\gamma_{i, m, 1, 2} {}_{2} x_{1}$ to a path visiting vertex 1 a single time and hence any path visiting vertex 1 twice is of the form $\gamma_{1, k} x \gamma_{k,1}$, for $k >1$.

This shows that the above set is a spanning set. It follows from the Diamond Lemma that for linear independence one only needs to show that all ambiguities are resolvable.

The family of Integral Reductions, Mound Reductions, and Death Reductions all arise as ambiguities with Upward Reductions. Moreover, Upward Reductions cannot overlap with X-Reductions nor with other Upward Reductions and hence there are no overlap ambiguities involving them. Clearly their are no ambiguities among two reductions sending the path to zero and so it remains to check Integral Reductions and X-reductions. 

Moreover, Integral Reductions overlap with Mound and Death Reductions in the same way that Upward Reductions do, with the resolution given by other Mound and Death Reductions. Integral Reductions cannot self overlap and the only ambiguity with X-reductions is $r^{X}_{2}$ and $r^{I}_{2}$ applied to $x \gamma_{2,1, 3, 2}$ which both further reduce to $\gamma_{2, 3, 1, 2} x + \gamma_{2, 4, 2}$ by applying $r^{U}_{3} \circ r^{X}_{2} \circ r^{I}_{2}$. The final ambiguity to resolve is $r^{X}_{2}$ and $r^{X}_{2,1}$ applied to $\gamma_{2, 1, 2} x \gamma_{2,1,2}$, with each term further resolving to zero by $r^{M}_{2} \circ r^{M}_{3}$.  
\end{proof}

\begin{rem} 
Carefully reading the above argument, and defining:
$$
r^{U} := r^{U}_{n} \circ \cdots \circ r^{U}_{3}, \hspace{1cm}
r^{M} := r^{M}_{n} \circ \cdots \circ r^{M}_{2},
\hspace{1cm}
r^{I} := r^{I}_{n-1} \circ \cdots \circ r^{I}_{2},
$$
$$
r^{D} := r^{D}_{n-1} \circ \cdots \circ r^{D}_{2}
\hspace{1cm} \text{where}  \hspace{1cm}
r^{D}_{i} := r^{D}_{i, i+1} \circ \cdots \circ r^{D}_{i, n}  
$$
one can define $\opr: P(Q, D_{Q}) \rightarrow P(Q, D_{Q})_{\text{irr}}$ to be the composition
$$
\opr:= r^{M} \circ r^{D} \circ r^{U} \circ r^{I} \circ r^{U} \circ ( r^{X}_{2} \circ r^{X}_{2,1})^{n}. 
$$
That is, one can first reduce a path by moving the $x$'s to the end then pushing upward until Integral Reductions arise and then further pushing upward until Mound or Death Reductions arise. 
\end{rem}

As before, the functional giving the sum of the coefficients of all the maximum length cycles containing an $x$ gives a Frobenius form. 
 
\begin{cor}
$\Pi(B_{n})$ is a flat Frobenius degeneration of $\Pi(A_{2n-1})$.
\end{cor}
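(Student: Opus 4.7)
Flatness of the algebra degeneration is already in hand: Proposition~\ref{inequality} gives $\dim_k \Pi(B_n) \le \dim_k \Pi(A_{2n-1}) = \tfrac{1}{3} n(2n-1)(2n+1)$, and the basis produced in the preceding proposition realizes this bound. What remains is to upgrade this to a Frobenius degeneration by producing a Frobenius form on $\Pi(B_n)$ that arises as the mod-$t$ specialization of the standard Frobenius form on $\Pi(A_{2n-1})$ along the Rees algebra built from the decoration-level Frobenius degeneration of Section~4.

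Following the pattern of $\Pi(G_2)$ and $\Pi(F_4)$ above, I would define $\lambda \colon \Pi(B_n) \to k$ to be the functional which on each diagonal block $e_i \Pi(B_n) e_i$ sums the coefficients of the basis elements in the top $(t, s)$-bigraded piece and vanishes on every strictly lower bigraded component as well as on every off-diagonal block $e_i \Pi(B_n) e_j$. Reading off the Hilbert--Poincar\'{e} series, the contributing top-degree cycles are $\gamma_{i,n,i}$ with a single $x$ inserted for $i \ge 2$, and $\gamma_{1,n}\, x\, \gamma_{n,1}$ at vertex $1$; each sits in path-degree $2n-2$ at the maximum $x$-grading available to its block.

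The main task is to verify non-degeneracy of the pairing $(a, b) \mapsto \lambda(ab)$. The plan is to order the basis of each block $e_i \Pi(B_n) e_j$ first by path length and then by the turning-point parameter ($k$ for $\gamma_{i,k,j}$, $m$ for $\gamma_{i,m,1,j}$), and to check that each basis element admits a canonical completion to a top-degree cycle in $e_i \Pi(B_n) e_i$ that is unique modulo strictly lower ordered terms. Concretely, one shows that for the canonical partner $b$ of any basis element $a$ the reduction $\opr(ab)$ lands on a non-zero top-degree basis element, while all other candidate partners yield reductions which die under some $r^{X}$, $r^{M}$, or $r^{D}$; this renders the pairing matrix triangular with non-vanishing diagonal. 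The bookkeeping involved---in particular tracking where the inserted $x$ must sit in each of the four basis families $\gamma_{i,k,j}$, $\gamma_{i,k,j}x$, $\gamma_{i,m,1,j}$, $\gamma_{i,m,1,j}x$ to survive reduction---is the chief technical obstacle, but it closely mirrors the analyses already carried out for $\Pi(G_2)$ and $\Pi(F_4)$.

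Finally, to see that $\lambda$ arises by degeneration, I would observe that the standard Frobenius form on $\Pi(A_{2n-1})$ projects onto its top path-graded component and is invariant under the graph automorphism that swaps symmetric maximal cycles. Hence it descends under folding to the constant-$k$ decorated quiver $(A_{2n-1}/\Aut(A_{2n-1}), C_{A_{2n-1}}/\Aut(A_{2n-1}))$ and lifts to a $k[[t]]$-valued Frobenius form on the Rees algebra realising the degeneration. Reducing mod $t$ and comparing with the explicit description of the top-degree cycles in $\Pi(B_n)$ identifies the resulting form with $\lambda$, completing the $B_n$ case of Theorem~\ref{thm:Dynkin}.
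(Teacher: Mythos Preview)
Your strategy coincides with the paper's: define the Frobenius form as projection onto the coefficients of the maximal-length cycles (with an $x$), then verify non-degeneracy by showing every basis element completes to such a cycle. The paper is extremely terse here---it states only that ``the functional giving the sum of the coefficients of all the maximum length cycles containing an $x$ gives a Frobenius form'' and declares the corollary---so your additional detail on the triangularity argument and on compatibility with the degeneration is welcome elaboration rather than a different route.

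Two concrete slips to fix. First, you have the direction of Proposition~\ref{inequality} backwards: the degeneration $\Pi(B_n)$ has dimension \emph{at least} $\dim\Pi(A_{2n-1})$, not at most; the spanning set in the preceding proposition then supplies the reverse inequality. (Since that proposition in fact proves linear independence via the Diamond Lemma, the exact dimension is pinned down either way, so your conclusion survives.) Second, and more substantively, your identification of the top-degree cycles is wrong for $i\ge 2$: the path $\gamma_{i,n,i}$ has length $2(n-i)$, not $2n-2$, so it is not the socle element of $e_i\Pi(B_n)e_i$. The maximal-length cycle at vertex $i\ge 2$ lies in the $\gamma_{i,m,1,i}x$ family with $m=n$, i.e.\ the path that climbs to vertex $n$, descends all the way to vertex $1$, and returns to $i$; this is the unique basis element of path-degree $2n-2$ and maximal $x$-degree in $e_i\Pi(B_n)e_i$. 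With this correction your triangularity check goes through as planned.
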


\subsection{Case: $\Pi(C_{n})$} \label{subsection:C_n}

Consider the decorated quiver
$$
\xymatrix{
\overset{1}{\bullet} \ar@/^/[r]^{{}_{k} k_{k}} & \overset{2}{\bullet} \ar@/^/[l]^{{}_{k} k_{k}} & \cdots & 
\overset{n-2}{\bullet} \ar@/^/[r]^{{}_{k} k_{k}} & \ar@/^/[l]^{{}_{k} k_{k}}  \overset{n-1}{\bullet} \ar@/^/[r]^{{}_{k} S_{S}} & \overset{n}{\bullet} \ar@/^/[l]^{{}_{S} S_{k}}
}
$$
The goal of this section is to compute a basis for the decorated preprojective algebra using the Diamond Lemma for rings, developed in \cite{Bergman} and explained in the previous subsection. 

\begin{prop} \label{Basis Cn 2}
A basis for $e_{i} \Pi(C_{n}) e_{j}$ is given by
$$
\{ \gamma_{i, m, j}; \ \ \gamma_{i, l, n} {}_{n} x_{n} \gamma_{n, j} \ \ \mid \ \ m \leq \text{min} \{i, j \}, i \geq l > \text{max} \{i-j, 0 \} \} 
$$ 
and hence $\dim( e_{i} \Pi(C_{n}) e_{j}) =2 \text{min} \{i, j \}$. In particular, $\Pi(C_{n})$ is a flat degeneration of $\Pi(D_{n+1})$.
\end{prop}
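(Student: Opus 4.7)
The plan is to mirror the strategy used in the $\Pi(B_n)$ subsection: introduce a lexicographic ordering on paths of $P_k(Q, D_Q)$, promote each defining relation of $\Pi(C_n)$ to a leading-term reduction, show that every path reduces to a $k$-linear combination of elements in the proposed spanning set, and invoke Proposition~\ref{inequality} to upgrade spanning to basis via a dimension count.

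First I would record the relations, with signs dropped per Remark~\ref{signs}: $R_1 = {}_1 1_2 \otimes_k {}_2 1_1$; the symmetric preprojective relation $R_j = {}_j 1_{j-1} \otimes_k {}_{j-1} 1_j + {}_j 1_{j+1} \otimes_k {}_{j+1} 1_j$ for $2 \leq j \leq n-2$; the mixed relation $R_{n-1} = {}_{n-1} 1_{n-2} \otimes_k {}_{n-2} 1_{n-1} + {}_{n-1} 1_n \otimes_S {}_n 1_{n-1}$, where the second tensor uses the single generator $1 \otimes_S 1 \in M^{\alpha} \otimes_S M^{\alpha^*} \cong S$; and $R_n = {}_n 1_{n-1} \otimes_k {}_{n-1} x_n + {}_n x_{n-1} \otimes_k {}_{n-1} 1_n$. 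Then order paths by length, by $x$-degree, and lexicographically with arrows pointing away from vertex $1$ declared largest. I introduce four families of reductions, in parallel with the $B_n$ case: \emph{downward reductions} $r^D_j$ for $2 \leq j \leq n-1$ which use $R_j$ (and $R_{n-1}$ at $j = n-1$) to replace each peak ${}_j 1_{j+1} \otimes {}_{j+1} 1_j$ by $-{}_j 1_{j-1} \otimes {}_{j-1} 1_j$, iteratively pushing peaks down toward the valleys; a \emph{mound reduction} $r^M_1$ sending ${}_1 1_2 \otimes {}_2 1_1$ to $0$ via $R_1$; an \emph{$x$-reduction} $r^X_n$ using $R_n$ to rewrite ${}_n x_{n-1} \otimes {}_{n-1} 1_n$ as $-{}_n 1_{n-1} \otimes {}_{n-1} x_n$, normalizing each $x$ so that it appears as the length-zero path ${}_n x_n$; and \emph{death reductions} arising from the overlaps between $R_{n-1}$, $R_n$, and the downward reductions, which force paths visiting vertex $n$ twice (with or without an intervening $x$) to vanish.

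Applying these reductions in the order $r^X_n$, $r^D_j$ from largest to smallest $j$, and finally $r^M_1$ together with the death reductions, every path collapses either to a pure V-path $\gamma_{i,m,j}$ with $m \leq \min\{i,j\}$, or to an $x$-path $\gamma_{i,l,n}\,{}_n x_n\,\gamma_{n,j}$. The boundary $l > \max\{i-j,0\}$ arises because, when $l \leq i-j$, an iterated interchange between $R_{n-1}$ and the downward reductions (analogous to the Integral Reductions of the $B_n$ case) rewrites the path as either a shorter $x$-path with larger $l$, already on the list, or as zero via a death reduction. Finally, summing the claimed dimensions gives $\sum_{i,j=1}^{n} 2\min\{i,j\} = \tfrac{1}{3} n(n+1)(2n+1) = \dim \Pi(D_{n+1})$. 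Since $\Pi(C_n)$ is a filtered Frobenius degeneration of $\Pi(D_{n+1})$, Proposition~\ref{inequality} gives $\dim \Pi(C_n) \geq \dim \Pi(D_{n+1})$, which together with the spanning property forces the spanning set to be a basis and the degeneration to be flat.

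The principal obstacle is pinpointing the cutoff $l > \max\{i-j,0\}$, since it encodes the full chain of overlap ambiguities between $R_{n-1}$ and $R_n$; this requires the same kind of case analysis that produced the Integral and Death Reductions in the $B_n$ subsection, with the roles of vertices $1$ and $n$ essentially swapped. The upper bound from Proposition~\ref{inequality} fortunately lets one bypass a complete Diamond Lemma verification once spanning is established.
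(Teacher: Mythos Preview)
Your proposal is correct and matches the paper's approach almost exactly: the paper orders paths lexicographically with arrows toward vertex~$n$ largest, introduces the same Downward Reductions $r_j$ (including your $r^M_1$ as $r_1$), the basic reduction $r_n$ from $R_n$, then derives a family of $X$-Reductions $r^X_k$ and Death Reductions $r^D_{i,j}$ from the overlap ambiguities you flag, and reduces every path to either $\gamma_{i,m,j}$ or $\gamma_{i,l,n}\,{}_n x_n\,\gamma_{n,j}$ with the stated bounds. The only difference is that the paper actually lists and resolves all five ambiguity families (so the normal forms are genuinely linearly independent via the Diamond Lemma), whereas you propose to bypass that step using Proposition~\ref{inequality}; the paper explicitly remarks at the start of Section~5 that your shortcut is legitimate.
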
 

The relations in the preprojective algebra are
$$
\begin{cases}
{}_{1} 1_{2} \otimes_{k_{2}} {}_{2}1_{1} & \text{ at vertex } 1 \\

{}_{j} 1_{j+1} \otimes_{k} {}_{j+1} 1_{j} + {}_{j} 1_{j-1} \otimes_{k} {}_{j-1} 1_{j} & \text{ at vertex } j \in \{2, 3, 4, \dots, n-1 \} \\
{}_{n-1} 1_{n} \otimes_{S} {}_{n} 1_{n-1} + {}_{n-1} 1_{n-2} \otimes_{k} {}_{n-2} 1_{n-1} & \text{ at vertex } n-1  \\
{}_{n} 1_{n-1} \otimes_{k} {}_{n-1} x_{n} + {}_{n} x_{n-1} \otimes_{k} {}_{n-1} 1_{n} & \text{ at vertex } n. \\
\end{cases}
$$
One views each relation as a reduction reducing the length or the number of times a path changes direction by pulling all paths towards vertex 1. More formally, one defines a partial order, determined on length one paths by 
$$
{}_{i} x_{i-1} > {}_{i} x_{i+1} > {}_{i} 1_{i-1} > {}_{i} 1_{i+1} 
$$
and extended to all paths lexicographically, as described in the previous subsection. Hence the reductions are determined by:
\begin{align*}
&r_{1} ({}_{1} 1_{2} \otimes_{k_{2}} {}_{2}1_{1}) = 0 \\
&r_{j}  ({}_{j} 1_{j+1} \otimes_{k} {}_{j+1} 1_{j}) = -{}_{j} 1_{j-1} \otimes_{k} {}_{j-1} 1_{j}  \ \text{ for } j \in \{ 2, \dots, n-2 \}\\
&r_{n-1}  ({}_{n-1} 1_{n} \otimes_{S} {}_{n} 1_{n-1}) =  -{}_{n-1} 1_{n-2} \otimes_{k} {}_{n-2} 1_{n-1} \\
&r_{n}  ( {}_{n} x_{n-1} \otimes_{k} {}_{n-1} 1_{n} ) = -{}_{n} 1_{n-1} \otimes_{k} {}_{n-1} x_{n}.
\end{align*}
With this set of reductions there are unresolvable ambiguities and hence we introduce additional reductions: \\

\noindent Downward Reductions
\begin{itemize}
\item $r_{1}( {}_{1} 1_{2} \otimes_{k} {}_{2} 1_{1} ) = 0 $
\item $r_{j}({}_{j} 1_{j+1} \otimes_{k} {}_{j+1} 1_{j} ) = -{}_{j} 1_{j-1} \otimes_{k} {}_{j-1} 1_{j} \text{ for } l < j < n-1$
\item $r_{n-1}({}_{n-1} 1_{n} \otimes_{S} {}_{n} 1_{n-1} ) = -{}_{n-1} 1_{n-2} \otimes_{k} {}_{n-2} 1_{n-1}$
\end{itemize}
X-Reductions
\begin{itemize}
\item $r^{X}_{k} ({}_{n} x_{n} \gamma_{n, k-1, k})
= \gamma_{n, n-1, n} {}_{n} x_{n} \gamma_{n, k}  \text{ for } 1 < k \leq n$
\end{itemize}
Death Reductions
\begin{itemize}
\item $r^{D}_{i, j}(\gamma_{i,i-j, n} {}_{n} x_{n} \gamma_{n, j}) = 0 \text{ for } i>j$.\\
\end{itemize}

Visually, Figures \ref{Downward Reductions}, \ref{X-Reductions}, and \ref{Death Reductions} provide schematics for the reductions, where paths are depicted with vertical position giving the vertex of the quiver and the horizontal component is strictly for visualization, as explained in the previous subsection.  

\begin{rem}
The X-reductions are introduced to resolve the ambiguity created by reducing $_{n} x_{n} \gamma_{n, n-1, n, k}$ using $r_{k} \circ \cdots \circ r_{n-2} \circ r_{n-1}$ as opposed to $r_{n}$. The Death Reductions are then introduced to resolve the ambiguity created by reducing $\gamma_{i, n} x_{n} (\gamma_{n, n-1, n})^{j} \gamma_{n, j}$ using $r_{i-j+1} \circ r_{i-j+2} \circ \cdots \circ r_{n} \circ (r_{n}^{X})^{j}$ as opposed to $r_{1} \circ r_{2} \circ \cdots \circ r_{n}$.
\end{rem}
  
\begin{figure}

\centering

\begin{turn}{270}
\begin{tikzpicture}[xscale=-1]

\draw[dashed] (2, 0) - - (2, 5) ;
\draw[dashed] (3, 0) - - (3, 5);
\draw[dashed] (4, 0) - - (4, 5) ;
\draw[dashed] (6, 0) - - (6, 5) ;

\filldraw[black] (2, 0) circle (2pt) node[anchor=north]{\rotatebox{90}{$j-1$}} ;
\filldraw[black] (3, 0) circle (2pt) node[anchor=north]
{\rotatebox{90}{$j$}} ;
\filldraw[black] (4, 0) circle (2pt) node[anchor=north]
{\rotatebox{90}{$j+1$}} ;
\filldraw[black] (6, 0) circle (2pt) node[anchor=north]
{\rotatebox{90}{$i$}} ;

\draw[very thick] (6,1) to [out=180,in=0] (4,1) to [out=180,in=270] (3,1.5) ;
\draw[blue, very thick] (3, 1.5) to [out=90,in=270] (4,2) to [out=90, in=270] (3,2.5) ;

\draw[very thick] (6,4) to [out=180,in=0] (3,4) ;
\draw[blue, very thick] (3, 4) to [out=180,in=270] (2,4.5) to [out=90,in=270] (3,5) ;

\draw[thick] (3.5, 2.8) -- (3.5, 3.1) node[anchor=west] {\rotatebox{90}{\color{red} $\mathbf{r_{j}}$}};
\draw [thick, ->] (3.5,3.1) -- (3.5, 3.5);

\draw[dashed] (8, 0) - - (8, 5) ;
\draw[dashed] (9, 0) - - (9, 5);
\filldraw[black] (8, 0) circle (2pt) node[anchor=north]
{\rotatebox{90}{$1$}} ;
\filldraw[black] (9, 0) circle (2pt) node[anchor=north] 
{\rotatebox{90}{$2$}};

\draw[blue, very thick] (8,1.5) to [out=90,in=270] (9,2) to [out=90, in=270] (8,2.5);

\draw[thick] (8.5, 3.2) -- (8.5, 3.6) node[anchor=west] {\rotatebox{90}{\color{red} $\mathbf{r_{1}}$}};
\draw [thick, ->] (8.5,3.6) -- (8.5, 3.9) node[anchor=south] {\rotatebox{90}{0}} ;

\end{tikzpicture}
\end{turn}
\caption{Downward Reductions}
\label{Downward Reductions}

\end{figure}

\begin{figure}
\centering

\begin{turn}{270}
\begin{tikzpicture}[xscale=-1]

\draw[dashed] (8, 0) - - (8, .5) ;
\draw[dashed] (8, 1) - - (8, 4);
\draw[dashed] (8, 4.5) - - (8, 5);
\draw[dashed] (7, 0) - - (7, 5);
\draw[dashed] (5, 0) - - (5, 5) ;
\draw[dashed] (4, 0) - - (4, 5) ;
\filldraw[black] (8, 0) circle (2pt) node[anchor=north] 
{\rotatebox{90}{$n$}}  ;
\filldraw[black] (7, 0) circle (2pt) node[anchor=north] 
{\rotatebox{90}{$n-1$}}  ;
\filldraw[black] (5, 0) circle (2pt) node[anchor=north]
{\rotatebox{90}{$j$}}  ;
\filldraw[black] (4, 0) circle (2pt) node[anchor=north] 
{\rotatebox{90}{$j-1$}}  ;

\draw (8,0.75) node {\rotatebox{90}{\large{x}}};
\draw[very thick, blue] (8,1) to [out=90,in=180] (7,1.5) to [out=180,in=180] (5,1.5) to [out=180,in=-90] (4,2) to [out=90, in=-90] (5,2.5) ;

\draw[very thick, blue] (8,3) to [out=180,in=-90] (7,3.5) to [out=90,in=-90] (8,4) ;
\draw (8, 4.25) node {\rotatebox{90}{\large{x}}};
\draw[very thick, blue] (8, 4.5) to [out=90, in=180] (7, 5)
to [out=180, in=0] (5, 5) ;

\draw[thick] (6, 2.5) -- (6, 3) node[anchor=west] {\rotatebox{90}{\color{red} $\mathbf{r_{n, j}}$}};
\draw [thick, ->] (6,3) -- (6, 3.5);

\end{tikzpicture}
\end{turn}
\caption{X-Reductions}
\label{X-Reductions}
\end{figure}
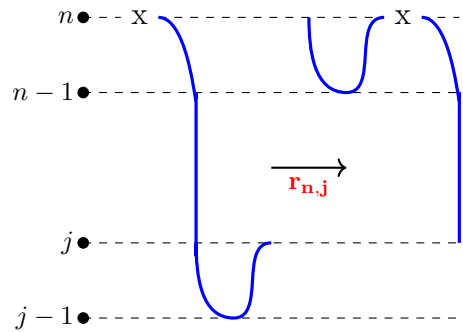

\begin{figure}
\centering

\begin{turn}{270}
\begin{tikzpicture}[xscale=-1]

\draw[dashed] (8, 0) - - (8, 2.3);
\draw[dashed] (8, 2.7) - - (8, 5);
\draw[dashed] (7, 0) - - (7, 5);
\draw[dashed] (5, 0) - - (5, 5) ;
\draw[dashed] (4, 0) - - (4, 5) ;
\draw[dashed] (2, 0) - - (2, 5) ;
\filldraw[black] (8, 0) circle (2pt) node[anchor=north] {\rotatebox{90}{$n$}} ;
\filldraw[black] (5, 0) circle (2pt) node[anchor=north] 
{\rotatebox{90}{$i$}}  ;
\filldraw[black] (4, 0) circle (2pt) node[anchor=north] 
{\rotatebox{90}{$i-j$}}  ;
\filldraw[black] (2, 0) circle (2pt) node[anchor=north] 
{\rotatebox{90}{$j$}}  ;

\draw[very thick, blue] (5,1) to [out=180,in=-90] (4,1.5) to [out=90,in=180] (5,2) to [out=0,in=180] (7,2) to [out=0, in=-90] (8,2.3) ;
\draw (8,2.5) node {\rotatebox{90}{\large{x}}};
\draw[very thick, blue] (8, 2.7) to [out=90, in=180] (7, 3) to [out=180, in=0] (2,3);

\draw[thick] (6, 3.5) -- (6, 3.8) node[anchor=west] 
{\rotatebox{90}{\color{red} $\mathbf{r'_{i, j}}$}};
\draw [thick, ->] (6,3.8) -- (6, 4.2) node[anchor=south] {\rotatebox{90}{0}} ;

\end{tikzpicture}
\end{turn}

\caption{Death Reductions}
\label{Death Reductions}
\end{figure}
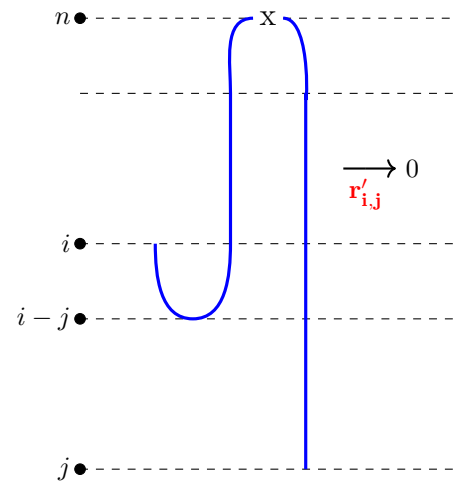

All ambiguities are resolvable:
\begin{itemize}
\item[(I)] The ambiguity giving rise to additional $X$-reductions: \\ 
$r^{X}_{j} - r^{X}_{j-1} \circ r_{j-1} ({}_{n} x_{n} \ \gamma_{n, j-1, j, j-1}) = 0 $
\item[(II)] The ambiguity giving rise to the Death reductions: \\
$ r_{1} - r^{X}_{2} \circ r^{X}_{1} ({}_{n} x_{n} \gamma_{n, 1, 2, 1}) = 0  $
\item[(III)] The Downward-Death ambiguity: \\
$r^{D}_{i,j} -  r^{D}_{i-1, j} \circ r_{i-j} \circ \cdots \circ r_{i-2} \circ r_{i-1} (\gamma_{i-1, i, i-j, n} {}_{n} x_{n} \gamma_{n, i-j}) = 0$ 
\item[(IV)]  The X-Death ambiguity: \\
$r^{D}_{i,j}-  r^{D}_{i, j+1} \circ  r_{i-j} \circ \cdots \circ r_{n-2} \circ r_{n-1} \circ r^{X}_{j+1} ( \gamma_{i, i-j, n} {}_{n} x_{n} \gamma_{n, j, j+1} )=0 $
\item[(V)] The Double X-Death ambiguity: \\
$r^{D}_{i, j} - r^{X}_{n-1} \circ r^{X}_{n-2} \circ \cdots \circ r^{X}_{j} ( {}_{n} x_{n} \gamma_{n, i-j-1, n} {}_{n} x_{n} \ \gamma_{n, j}) = 0$
\end{itemize}

With these reductions one can put any path in normal form. According to the Downward Reductions such a path cannot go from increasing to decreasing, except at vertex $n$. Consequently, paths not containing $x$ must be of the from $\gamma_{i, m, j}$ for $m \leq \text{min} \{ i, j\}$, where equality gives the direct path. Moreover, no path can visit vertex $n$ more than once since between the final two such visits the path can be reduced to $\gamma_{n, m, n}$ at which point iterated application of the X-reductions collides the two $x$'s giving zero.

Applying the X-reductions to a path with one instance of $x$ allows for the path after $x$ to be direct. Therefore, every such path can be reduced to one of the form $\gamma_{i, l, n} {}_{n} x_{n} \gamma_{n, j}$ and the Death Reductions say that these are non-zero if $l >i-j$. This proves Proposition \ref{Basis Cn 2}.


\begin{cor}
The matrix-valued Hilbert--Poincar\'{e} series of $\Pi(C_{n})$ is of the form,
$$
\left( 
\begin{array}{ccccc}
2 & 2 & 2 & \cdots & 2 \\
2 & 4 & 4 & \cdots & 4 \\
2 & 4 & 6 & \cdots & 6 \\
\vdots & \vdots & \vdots & \ddots & \vdots \\
2 & 4 & 6 & \cdots & 2n \\
\end{array}
\right)
$$
The bigraded Hilbert--Poincar\'{e} series 
$$
h_{e_{i} \Pi(C_{n}) e_{j}}(s,t) = 
t^{d(i,j)}(1+t^2+t^4+ \cdots + t^{2 \min \{i,j\}-2})(1+ s t^{2 d(\max \{i, j \}, n)})
$$
where $d(a,b) = | a- b |$ is the distance between the vertices $a$ and $b$. 
\end{cor}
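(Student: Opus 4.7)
The plan is a direct enumeration based on the explicit basis from Proposition \ref{Basis Cn 2}. That basis splits into two families: the ``bouncing'' paths $\gamma_{i,m,j}$ for $1 \le m \le \min\{i,j\}$, each with path length $(i-m)+(j-m) = i+j-2m$ and $x$-degree $0$; and the ``via-$n$-with-$x$'' paths $\gamma_{i,l,n}\,{}_{n}x_{n}\,\gamma_{n,j}$ for $\max\{i-j,0\} < l \le i$, each with path length $(i-l)+(n-l)+(n-j) = i+2n-2l-j$ and $x$-degree $1$. In either case the family has exactly $\min\{i,j\}$ members.

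First I would sum over the bouncing family, which gives a geometric progression in $t^2$ whose smallest exponent is $i+j-2\min\{i,j\} = |i-j| = d(i,j)$, yielding $t^{d(i,j)}(1+t^2+\cdots+t^{2\min\{i,j\}-2})$. Then I would sum over the second family, treating $i \le j$ and $i > j$ together: in both subcases the length $i+2n-2l-j$ runs through an arithmetic progression of $\min\{i,j\}$ terms with common difference $2$ (as $l$ decreases by $1$), and the minimum value, attained at $l = i$, is always $2n-i-j$. The key simplifying identity is
$$
2n - i - j \;=\; |i-j| + 2(n-\max\{i,j\}) \;=\; d(i,j) + 2\,d(\max\{i,j\},n),
$$
which lets the contribution of the second family be written as $s \cdot t^{d(i,j)+2d(\max\{i,j\},n)}(1+t^2+\cdots+t^{2\min\{i,j\}-2})$. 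Adding the two contributions and factoring out the common prefactor gives exactly
$$
h_{e_{i}\Pi(C_{n})e_{j}}(s,t) \;=\; t^{d(i,j)}(1+t^2+\cdots+t^{2\min\{i,j\}-2})\bigl(1+s\,t^{2d(\max\{i,j\},n)}\bigr).
$$

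For the dimension matrix I would specialize at $s = t = 1$: the middle factor becomes $\min\{i,j\}$ and the last factor becomes $2$, so $h_{e_{i}\Pi(C_{n})e_{j}}(1,1) = 2\min\{i,j\}$, which matches the $(i,j)$-entry of the displayed matrix. No real obstacle arises; the whole argument is a short counting calculation riding on the basis already established. The only point that requires a moment's care is confirming that the two subcases of the second family (based on whether $i \le j$ or $i > j$) produce the same polynomial in $t$, which is immediate from the identity above together with the observation that each range of $l$ contains exactly $\min\{i,j\}$ integers.
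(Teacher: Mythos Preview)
Your proposal is correct and is exactly the approach implicit in the paper: the corollary is stated without proof immediately after Proposition \ref{Basis Cn 2}, so the intended argument is precisely this direct enumeration of the basis elements by bidegree. Your verification of the path lengths, the count $\min\{i,j\}$ for each family, and the identity $2n-i-j = d(i,j) + 2\,d(\max\{i,j\},n)$ are all accurate and make the derivation fully explicit.
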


As above, one can define a Frobenius form on the degenerate preprojective algebra using the cycles of maximal length, $\gamma_{i, n} {}_{n}x_{n} \alpha^{i-1} \gamma_{n, i}$ for each $i \in \{1, \dots, n\}$. 

\begin{cor}
$\Pi(C_{n})$ is a flat Frobenius degeneration of $\Pi(D_{n+1}).$
\end{cor}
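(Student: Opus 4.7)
The plan is to combine the basis computation in Proposition \ref{Basis Cn 2} with an explicit construction of a Frobenius form, then show the form lifts through the one-parameter deformation coming from the degeneration $k \oplus k \rightsquigarrow S$ at the last vertex.

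First I would establish flatness on the level of the underlying vector space. Proposition \ref{Basis Cn 2} gives $\dim_{k} e_{i} \Pi(C_{n}) e_{j} = 2 \min\{i,j\}$, so summing over $i,j$ yields
$$\dim_{k} \Pi(C_{n}) = 2 \sum_{i,j=1}^{n} \min\{i,j\} = 2 \sum_{k=1}^{n} k^{2} = \tfrac{1}{3} n(n+1)(2n+1) = \dim_{k} \Pi(D_{n+1}).$$
By Proposition \ref{inequality}, $\dim_{k} \Pi(D_{n+1}) \geq \dim_{k} \Pi(C_{n})$, so equality forces flatness of the degeneration constructed by folding $D_{n+1} \to C_{n}$ (an isomorphism of preprojective algebras) and then degenerating the decoration at vertex $n$ from $k \oplus k$ to $S = k[x]/(x^{2})$ via the explicit family $k[[t]][y]/(y^{2}-t)$, following Example \ref{exam defs}. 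The same argument refined via Hilbert--Poincar\'{e} series (using the bigraded formula in the preceding corollary) gives flatness in each bigraded piece.

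Next I would construct an explicit Frobenius form $\lambda_{C_{n}} : \Pi(C_{n}) \to k$. From the bigraded Hilbert--Poincar\'{e} series, the top-bidegree piece $e_{i} \Pi^{(2n-2,1)}(C_{n}) e_{i}$ is one-dimensional for each $i$, spanned by the cycle $c_{i} := \gamma_{i,1,n} \ {}_{n}x_{n} \ \gamma_{n,1,i}$; any basis element of $e_{i} \Pi(C_{n}) e_{j}$ of strictly smaller bidegree is strictly less than $c_{i}$ in the partial order used for the reductions. Define $\lambda_{C_{n}}$ by projecting onto $\bigoplus_{i=1}^{n} k \cdot c_{i}$ and summing the coefficients. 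To verify non-degeneracy of the pairing $(a,b) \mapsto \lambda_{C_{n}}(ab)$, I would pair off the two families of basis elements: for $b = \gamma_{i,m,j}$ I would take $b' = \gamma_{j,1,n} \ {}_{n}x_{n} \ \gamma_{n,m,i}$, and for $b = \gamma_{i,l,n} \ {}_{n}x_{n} \ \gamma_{n,j}$ I would take $b' = \gamma_{j,l,i}$; in each case the product $bb'$ reduces to $c_{i}$ using precisely the Downward, $X$-, and Death reductions already verified to be a confluent rewriting system for $\Pi(C_{n})$.

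Finally I would check that $\lambda_{C_{n}}$ arises as the $t \to 0$ reduction of the standard Frobenius form $\lambda_{D_{n+1}}$ on the generic fiber. The standard form on $\Pi(D_{n+1})$ is also given by projecting onto the socle, which in the bigraded description of $\Pi_{k[[t]]}(C_{n}, D_{Q}^{t})$ is generated (over $k[[t]]$) by cycles lifting the $c_{i}$: over $k[[t]]$, the element $_{n}x_{n}$ deforms to the unique (up to unit) element of the two-dimensional algebra $k[[t]][y]/(y^{2}-t)$ whose Frobenius form is $1$, and the lifted cycles $\tilde c_{i}$ generate the top-graded piece freely over $k[[t]]$. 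Then Remark \ref{rem:FormOnDeformations} identifies $\lambda_{C_{n}}$ with $\lambda_{D_{n+1}} \bmod t$.

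The main obstacle I anticipate is the last step: producing the compatible lifts $\tilde c_{i}$ in the Rees-algebra family and verifying that the form inherited from $\lambda_{D_{n+1}}$ really reduces to $\lambda_{C_{n}}$ on the nose rather than merely up to a (possibly vanishing) invertible scalar. This requires care because the Frobenius form on $k[[t]][y]/(y^{2}-t)$ does \emph{not} restrict to a non-degenerate form on the $t=0$ fiber if one uses a naive choice of generators; one must instead select the Frobenius form whose value on $y$ is $1$, so that under $y \mapsto x$ the form reduces to the standard Frobenius form on $S$. Once this choice is fixed, tracking it through the tensor products defining $\Pi_{k[[t]]}$ produces the desired identification, and completes the proof of the corollary.
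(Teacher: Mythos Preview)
Your proposal is correct and follows essentially the same route as the paper: flatness is deduced from the dimension count in Proposition~\ref{Basis Cn 2} together with Proposition~\ref{inequality}, and the Frobenius structure is obtained by projecting onto the one-dimensional top-bidegree piece at each vertex, with the form inherited from the deformation. The paper is far terser here---it simply asserts, by analogy with the $G_2$, $F_4$, and $B_n$ cases, that ``one can define a Frobenius form on the degenerate preprojective algebra using the cycles of maximal length'' and states the corollary---so your explicit non-degeneracy check and your careful tracking of the form through the Rees family go well beyond what the paper actually writes down.

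One small correction: your maximal cycles $c_i = \gamma_{i,1,n}\,{}_n x_n\,\gamma_{n,1,i}$ and your proposed dual elements $b'$ are not in the normal form of Proposition~\ref{Basis Cn 2}, where the subpath after ${}_n x_n$ is required to be direct (i.e.\ $\gamma_{n,j}$, not $\gamma_{n,1,j}$). The genuine basis representative of the top cycle at vertex $i$ is $\gamma_{i,1,n}\,{}_n x_n\,\gamma_{n,i}$, and the paper writes yet another representative $\gamma_{i,n}\,{}_n x_n\,\alpha^{i-1}\gamma_{n,i}$. All of these agree modulo the reductions, so your non-degeneracy argument goes through once you either rewrite $c_i$ and $b'$ in normal form or note that the pairing is insensitive to the choice of representative; but as written, your $c_i$ has path length $2n+2i-4$ rather than $2n-2$, so the formula should be adjusted.
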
 

This is the final case of maximal degenerations of folding ADE quivers and hence we've completed the proof of Theorem \ref{thm:Dynkin}.

\bibliography{FD-wThesisCorrections}
\bibliographystyle{plain}


\end{document}